\pdfoutput=1
\documentclass[microtype]{gtpart}
\usepackage[margin=1.45in]{geometry}
\usepackage{graphicx}
\usepackage[mathscr]{eucal}
\usepackage{amssymb}
\usepackage[dvipsnames]{xcolor}
\usepackage{enumerate, cite}
\usepackage{overpic}
\usepackage{pinlabel}


\newcommand{\br}{\mathbb{R}}

\newcommand{\bz}{\mathbb Z}
\newcommand{\bn}{\mathbb N}

\newcommand{\cE}{\mathcal E}
\newcommand{\cc}{\mathcal C}
\newcommand{\cm}{\mathscr M}
\newcommand{\cac}{\mathcal{AC}}

\newcommand{\sE}{\mathscr E}

\newcommand{\vp}{\varphi}

\newcommand{\ssm}{\smallsetminus}

\DeclareMathOperator{\mcg}{MCG}

\DeclareMathOperator{\Homeo}{Homeo}

\renewcommand{\co}{\colon\thinspace}



\newtheorem{Thm}{Theorem}[section]
\newtheorem{Thm*}{Theorem}
\newtheorem{Prop}[Thm]{Proposition}
\newtheorem{Lem}[Thm]{Lemma}
\newtheorem{Cor}[Thm]{Corollary}
\newtheorem{Cor*}[Thm*]{Corollary}

\newtheorem*{MainThm1}{Theorem~\ref{thm:main}}
\newtheorem*{MainThm2}{Theorem~\ref{thm:meager}}

\theoremstyle{definition}
\newtheorem{Def}[Thm]{Definition}

\newtheorem{Rem}[Thm]{Remark}

\numberwithin{equation}{section}


\title{Mapping class groups with the Rokhlin property}

\author{Justin Lanier}
\address{Department of Mathematics \\ University of Chicago \\ Chicago, IL 60637} 
\email{jlanier@math.uchicago.edu}

\author{Nicholas G. Vlamis}
\address{Department of mathematics \\ CUNY Graduate Center \\ New York, NY 10016, and \newline Department of Mathematics \\ CUNY Queens College \\ Flushing, NY 11367}
\email{nvlamis@gc.cuny.edu}

\keyword{mapping class groups}
\keyword{Rokhlin property}
\keyword{conjugacy classes}
\keyword{Polish groups}
\keyword{infinite-type surfaces}
\subject{primary}{msc2020}{57K20}
\subject{secondary}{msc2020}{54H11}
\subject{secondary}{msc2020}{37B05}
\subject{secondary}{msc2020}{20E45}

\begin{document}  

\begin{abstract}
We classify the connected orientable 2-manifolds whose mapping class groups have a dense conjugacy class.
We also show that the mapping class group of a connected orientable 2-manifold has a comeager conjugacy class if and only if the mapping class group is trivial.

\bigskip
Keywords: mapping class groups, Rokhlin property, Polish groups, infinite-type surfaces
\end{abstract}

\maketitle



\section{Introduction}

A topological group has the \emph{Rokhlin property} if it contains a dense conjugacy class.
The Rokhlin property is a statement about the dynamics of a topological group acting on itself by conjugation; in fact, in the setting of Polish groups, the Rokhlin property is equivalent to this group action being topologically transitive.  
Examples of groups with the Rokhlin property include the symmetric group on a countably infinite set, the homeomorphism groups of the Cantor set and the Hilbert cube, the group of orientation-preserving homeomorphisms of an even-dimensional sphere, the automorphism groups of the random graph and the countably infinite-rank free group, and the isometry group of the rational Urysohn space.
(See the introduction of \cite{KechrisRosendal} for more history, context, and references with regards to these examples.)

The goal of this article is to classify all connected orientable 2-manifolds\footnote{A surface can have boundary, so we use the term 2-manifold when we want to stress there is no boundary.  This is further clarified in Section~\ref{sec:preliminaries}.}  whose mapping class groups have the Rokhlin property.
The mapping class group \( \mcg(S) \) of an orientable 2-manifold \( S \) is the group of homotopy classes of orientation-preserving homeomorphisms.
Viewing \( \mcg(S) \) as a quotient of the group of orientation-preserving homeomorphisms \( S \to S \), denoted \( \Homeo^+(S) \), we equip  \( \mcg(S) \) with the quotient topology with respect to the compact-open topology on \( \Homeo^+(S) \).

The statement of our main theorem, Theorem~\ref{thm:main}, relies on terminology introduced by Mann--Rafi \cite{MannRafi}, namely the notions of maximality and self-similarity for end spaces.
We will introduce the relevant definitions in detail in Section \ref{sec:preliminaries}; in the meantime, we will follow the theorem statement with several examples. 

\begin{MainThm1}
The mapping class group of a connected orientable 2-manifold has the Rokhlin property if and only if the manifold is either the 2-sphere or a non-compact manifold whose genus is either zero or infinite and whose end space is self-similar with a unique maximal end.
\end{MainThm1}

Despite the restrictive conditions in Theorem~\ref{thm:main}, there are uncountably many 2-manifolds whose mapping class groups have the Rokhlin property. Other than the plane and the sphere, whose mapping class groups are trivial, the two simplest examples are the \emph{Loch Ness monster surface}---the connected orientable one-ended infinite-genus 2-manifold---and the \emph{flute surface}---the plane with an infinite closed discrete set removed (these are depicted in Figure~\ref{fig:S1}).
The Loch Ness monster surface has a unique end, so it is maximal; the flute surface has a unique non-isolated end, which is the unique maximal end.

On the other hand, there are several ways for a mapping class group to fail to have the Rokhlin property as we will see.
One mapping class group that fails to have the Rokhlin property is the mapping class group of a 2-sphere with a Cantor set removed (the so-called \emph{Cantor tree surface}).
This is surprising for two reasons.
First, as already noted, both the group of homeomorphisms of the Cantor set and the group of orientation-preserving homeomorphisms of the 2-sphere have the Rokhlin property, and yet this property is not passed to the mapping class group.
Second, the Cantor tree surface and the Loch Ness monster surface are the standard examples of the two ways in which a surface can have a self-similar end space.
In previous work, the theory of mapping class groups of surfaces with self-similar end spaces has tended to coincide  (see \cite{MannRafi, APV2, LanierLoving}).

\begin{figure}[h]
 \labellist
 \small\hair 2pt
 \endlabellist  
\centering
\includegraphics[width=.9\textwidth]{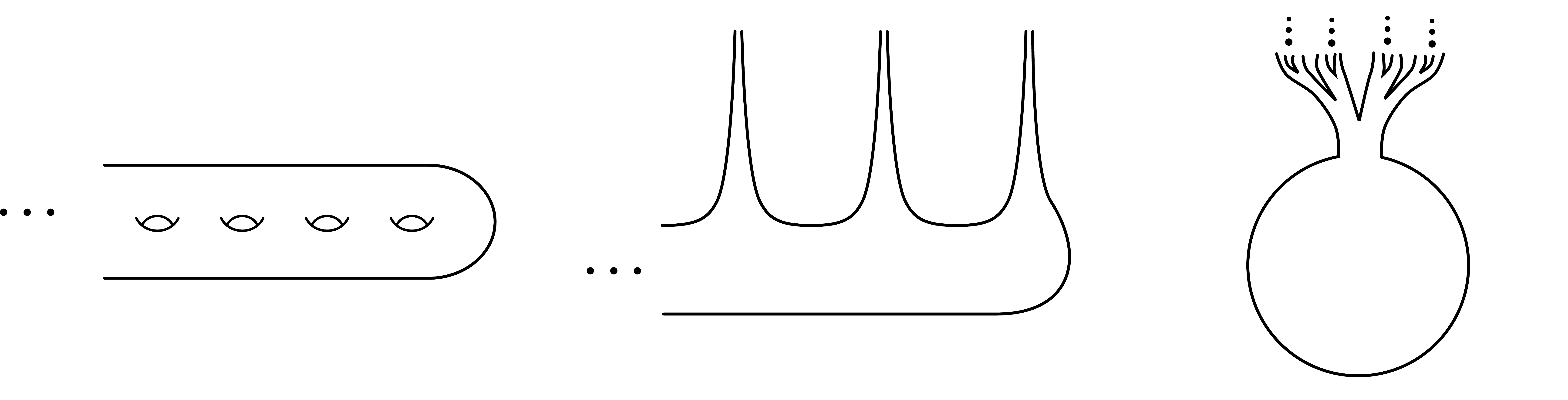}
\caption{The Loch Ness monster surface, the flute surface, and the Cantor tree surface.}
\label{fig:S1}
\end{figure}

In Section~\ref{sec:example}, we explain how to explicitly construct a dense conjugacy class when one exists; however, this construction is auxiliary to our proof of Theorem~\ref{thm:main}.
In particular, instead of directly studying conjugacy classes, we focus on the dynamics of the conjugation action of the group on itself to prove Theorem~\ref{thm:main}.
A topological group is said to have the \emph{joint embedding property}, or \emph{JEP} for short, if the action of the group on itself by conjugation is topologically transitive (see Definition~\ref{def:jep}).
It is a standard result that a Polish group has the JEP if and only if it has a dense conjugacy class (see Theorem~\ref{thm:ergodic}). 

A dense set need not take up much room, and this is the case for dense conjugacy classes in mapping class groups:
we finish the article by proving that nontrivial mapping class groups do not contain generic elements, that is, no conjugacy class is comeager. 

\begin{MainThm2}
The mapping class group of an orientable 2-manifold has a comeager conjugacy class if and only if the mapping class group  is trivial. 
\end{MainThm2}

As we discuss in detail later in the introduction, the motivation for Theorem~\ref{thm:meager} is to show that no nontrivial mapping class group has ample generics, which has connections to the automatic continuity property.

Theorem~\ref{thm:main} and Theorem~\ref{thm:meager} were recently and independently obtained by Hern\'andez, Hrusak, Morales, Randecker, Sedan, and Valdez \cite{HHMRSV}. 

It is natural to ask if there exist mapping class groups that fail to have the Rokhlin property, but have the \emph{virtual Rokhlin property}, that is,  if they have a closed finite-index subgroup with the Rokhlin property.
In a forthcoming sequel paper, the authors prove the existence of such mapping class groups and give a full classification of them. 
This further work is also motivated by the fact that Corollary~\ref{cor:cm-slender} below holds more generally for groups with the virtual Rokhlin property.

\subsection*{Applications}

\textbf{Homomorphisms to countable groups.}
The Rokhlin property, though topological and dynamical in nature, can have algebraic consequences when coupled with automatic continuity properties of classes of discrete groups.
A group \( G \) is called \emph{cm-slender} (resp. \emph{lcH-slender}) if, for any completely metrizable group (resp. locally compact Hausdorff group) \( H \), the kernel of every abstract homomorphism \( H \to G \) is open.

A group with the Rokhlin property cannot contain a proper open normal subgroup, and therefore every homomorphism from a completely metrizable group or locally compact Hausdorff group with the Rokhlin property to a cm-slender or lcH-slender group is trivial.
In particular, since mapping class group are completely metrizable, every homomorphism from a mapping class group with the Rokhlin property to a cm-slender group is trivial.
Using Theorem~\ref{thm:main}, this observation can be summarized as follows:

\begin{Cor}
\label{cor:cm-slender}
Let \( S \) be a connected orientable non-compact 2-manifold of either zero or infinite genus.
If the end space of \( S \) is self-similar with a unique maximal end, then every homomorphism from \( \mcg(S) \) to a cm-slender group is trivial. 
\end{Cor}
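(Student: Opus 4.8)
The plan is to combine Theorem~\ref{thm:main} with the defining property of cm-slenderness, mediated by an elementary observation about groups with the Rokhlin property. First I would invoke Theorem~\ref{thm:main}: the hypotheses on \( S \)---connected, orientable, non-compact, of zero or infinite genus, with self-similar end space having a unique maximal end---are precisely the conditions (other than the 2-sphere) under which \( \mcg(S) \) has the Rokhlin property. Hence \( \mcg(S) \) contains a dense conjugacy class.

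Next I would isolate the key elementary fact, namely that a topological group \( G \) with the Rokhlin property has no proper open normal subgroup. Suppose \( N \leq G \) is open and normal, and let \( C \) be a dense conjugacy class. Since \( N \) is open and \( C \) is dense, we have \( C \cap N \neq \varnothing \); picking \( c \in C \cap N \) and using that \( N \) is normal gives \( C = \{ g c g^{-1} : g \in G \} \subseteq N \), so \( C \subseteq N \). An open subgroup is also closed, since its complement is a union of (open) cosets, so \( N = \overline{N} \supseteq \overline{C} = G \), forcing \( N = G \).

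Then I would apply cm-slenderness. Let \( \phi \co \mcg(S) \to G \) be an abstract homomorphism into a cm-slender group \( G \). As recorded in the excerpt, \( \mcg(S) \) is completely metrizable, so by the definition of cm-slenderness the kernel \( \ker \phi \) is an open subgroup of \( \mcg(S) \). Kernels are normal, so \( \ker \phi \) is an open normal subgroup, and by the previous step it must equal all of \( \mcg(S) \); that is, \( \phi \) is trivial.

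The only genuinely hard input is Theorem~\ref{thm:main} itself, which supplies the Rokhlin property and is the main theorem of the paper; granting it, the remainder is a short formal deduction. The sole subtleties to check carefully are the two standard facts used above---that open subgroups are closed and that \( \mcg(S) \) is completely metrizable---both of which are routine in this setting.
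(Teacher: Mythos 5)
Your proposal is correct and follows essentially the same route as the paper: the authors likewise deduce the corollary by combining Theorem~\ref{thm:main} with the observation that a group with the Rokhlin property has no proper open normal subgroup, and then applying the definition of cm-slenderness to the (completely metrizable) mapping class group. Your explicit verification that a dense conjugacy class forces any open normal subgroup to be the whole group is exactly the standard argument the paper leaves implicit.
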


The class of cm-slender and lcH-slender groups is quite broad and includes free and free abelian groups \cite{Dudley}; torsion-free word-hyperbolic groups, Baumslaug--Solitar groups,  and Thompson's group \( F \) \cite{ConnerCorson};  non-exceptional spherical Artin groups (e.g.~braid groups) \cite{CorsonPreservation}; and any torsion-free subgroup of a mapping class group of an orientable finite-type surface \cite{BogopolskiAbstract}.
More generally, in \cite{ConnerCorson}, Conner--Corson give several algebraic and geometric properties that imply a group is cm- and lcH-slender as well as show that these properties are closed under direct products, free products, and graph products.
Also, note that in the sense of Gromov \cite{Gromov}, a generic finitely generated group is torsion-free hyperbolic; hence, a generic finitely generated group is cm- and lcH-slender.
Further, Conner \cite{ConnerPrivate} conjectures a countable group is cm-slender if and only if it is torsion free and does not contain an isomorphic copy of \( \mathbb Q \).

It has proven difficult to find proper normal countable-index subgroups of mapping class groups of surfaces with self-similar end spaces;
Corollary~\ref{cor:cm-slender} provides an explanation of this difficulty in the case of a unique maximal end.
Conner's conjecture would suggest that to find a normal subgroup of countably infinite index, then one would need to construct a homomorphism to the rationals: in \cite{Domat}, Domat and Dickmann do just that for the case of the mapping class group of the Loch Ness monster surface.
Their homomorphism is (necessarily) discontinuous; in contrast, in the case of the mapping class group of the Cantor tree surface, where every homomorphism to the rationals must be continuous \cite{MannAutomatic2} (see discussion below on automatic continuity), the second author \cite{VlamisThree} has shown that no normal countable-index subgroups exist.

\textbf{Infinite-degree symplectic group.}
Let \( V \) be an infinite-rank \( \bz \)-module with countable basis \( \{a_n, b_n : n \in \bn\} \)  and let \( w \) be a symplectic form such that \( w(a_n,a_m)=w(b_n,b_m) = 0 \) and \( w(a_n,b_m) = \delta_{n,m} \) for all \( n,m \in \bz \).
The \emph{infinite-degree integral symplectic group}, denoted \( \mathrm{Sp}(\bn,\bz) \), is the group of automorphisms of \( V \) preserving \( w \). 
In \cite[Corollary~3.3]{FHV}, it is shown that there is a continuous epimorphism \( \mcg(L) \to \mathrm{Sp}(\bn,\bz) \), where \( L \) is the Loch Ness monster surface (the topology on \( \mathrm{Sp}(\bn,\bz) \) is defined analogously to the presentation given in Section \ref{sec:mcg} for the mapping class group, see \cite[Section 3]{FHV} for more details).
It immediately follows from Theorem~\ref{thm:main} that \( \mathrm{Sp}(\bn,\bz) \) has the Rokhlin property:

\begin{Cor}
\label{cor:sp}
The group \( \mathrm{Sp}(\bn,\bz) \) has the Rokhlin property.
\end{Cor}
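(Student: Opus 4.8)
The plan is to deduce this directly from Theorem~\ref{thm:main} together with the elementary fact that the Rokhlin property is inherited by continuous homomorphic images. First I would verify that the Loch Ness monster surface \( L \) satisfies the hypotheses of Theorem~\ref{thm:main}: it is non-compact, has infinite genus, and has a single end, which is therefore maximal, so its end space is trivially self-similar with a unique maximal end. Hence \( \mcg(L) \) has the Rokhlin property, and there is a mapping class \( g \in \mcg(L) \) whose conjugacy class is dense.

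The heart of the argument is the observation that a continuous surjective homomorphism \( \phi \co G \to H \) of topological groups carries a dense conjugacy class to a dense conjugacy class. Indeed, writing \( C_g \) for the conjugacy class of \( g \) in \( G \), surjectivity of \( \phi \) gives \( \phi(C_g) = C_{\phi(g)} \), the conjugacy class of \( \phi(g) \) in \( H \): for each \( k \in H \) choose \( h \in G \) with \( \phi(h)=k \), so that \( k\phi(g)k^{-1} = \phi(hgh^{-1}) \). Moreover \( \phi \) maps dense sets to dense sets, since continuity gives \( \overline{\phi(C_g)} \supseteq \phi(\overline{C_g}) = \phi(G) = H \), using that \( C_g \) is dense. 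Thus \( C_{\phi(g)} \) is dense in \( H \).

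Applying this to the continuous epimorphism \( \mcg(L) \to \mathrm{Sp}(\bn,\bz) \) of \cite[Corollary~3.3]{FHV}, the image of a dense conjugacy class in \( \mcg(L) \) is a dense conjugacy class in \( \mathrm{Sp}(\bn,\bz) \), so the latter has the Rokhlin property. I do not expect any serious obstacle here. The only inputs are Theorem~\ref{thm:main}, the existence of the epimorphism, which is quoted from \cite{FHV}, and the general density lemma; the one point worth flagging is that this lemma requires only continuity and surjectivity of \( \phi \) (not that \( \phi \) be open or a quotient map), which is exactly what the short closure computation above confirms.
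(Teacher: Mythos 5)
Your proposal is correct and is exactly the argument the paper intends: the paper simply cites the continuous epimorphism \( \mcg(L) \to \mathrm{Sp}(\bn,\bz) \) from \cite{FHV} and states that the corollary ``immediately follows'' from Theorem~\ref{thm:main}, with the implicit step being precisely your lemma that a continuous surjection carries a dense conjugacy class onto a dense conjugacy class. Your verification that \( L \) satisfies the hypotheses of Theorem~\ref{thm:main} and your closure computation correctly fill in the details the paper leaves to the reader.
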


More generally, Hensel, Fanoni, and the second author in \cite{FHV} give a characterization of the image of \( \mcg(S) \) in \( \mathrm{Sp}(\bn,\bz) \) for an arbitrary connected orientable infinite-type 2-manifold \( S \) under the action of \( \mcg(S) \) on its first homology.
It follows that this image group has the Rokhlin property whenever \( S \) has a unique maximal end and its genus is either zero or infinite.

We note that it is possible to directly apply the methods in Section~\ref{sec:single} to \( \mathrm{Sp}(\bn,\bz) \) to prove Corollary~\ref{cor:sp} without reference to mapping class groups.

\textbf{Zero-dimensional spaces.}
The end space of a 2-manfiold is a compact, second-countable, zero-dimensional, Hausdorff topological space, and moreover, every such space can be realized as the end space of a 2-manifold.  
The mapping class group of a 2-manifold naturally acts on its space of ends, and the induced homomorphism from the mapping class group to the homeomorphism group of its space of ends is continuous and surjective.
We therefore have the following corollary of Theorem~\ref{thm:main}:

\begin{Cor}
\label{cor:zero-dimensional}
If a compact, second-countable, zero-dimensional, Hausdorff topological space \( E \) is self-similar with a unique maximal point, then its homeomorphism group \( \Homeo(E) \) has the Rokhlin property.  
\end{Cor}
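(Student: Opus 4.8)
The plan is to realize \( E \) as the end space of a 2-manifold whose mapping class group we already understand, and then to transport the Rokhlin property across the natural homomorphism to \( \Homeo(E) \). First I would invoke the realization result quoted above: since \( E \) is compact, second-countable, zero-dimensional, and Hausdorff, there is a connected orientable 2-manifold \( S \) whose end space is homeomorphic to \( E \). Among the possible choices I would insist that \( S \) have genus zero, so that no end of \( S \) is accumulated by genus. This is the crucial choice, because the Mann--Rafi notions of maximality and self-similarity are defined relative to the pair consisting of the end space together with its subspace of ends accumulated by genus; when that subspace is empty, these notions depend only on \( E \) as a topological space, and therefore coincide with the hypotheses we are given on \( E \).

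With \( S \) so chosen, I would check the hypotheses of Theorem~\ref{thm:main}. If \( E \) is nonempty then \( S \) is non-compact, and the degenerate cases are handled directly (when \( E = \varnothing \) the group \( \Homeo(E) \) is trivial and the Rokhlin property holds vacuously, and the nonempty compact case is the sphere, whose end space is empty). By construction \( S \) has genus zero, and by the previous paragraph its end space is self-similar with a unique maximal end precisely because \( E \) is self-similar with a unique maximal point. Theorem~\ref{thm:main} then yields that \( \mcg(S) \) has the Rokhlin property, i.e.\ that it contains a dense conjugacy class \( C \).

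The final step is the observation that the Rokhlin property passes along the continuous surjection \( \vp \co \mcg(S) \to \Homeo(E) \) induced by the action of the mapping class group on the space of ends. I would record the general fact that if \( \vp \co G \to H \) is a continuous surjective homomorphism and \( G \) contains a dense conjugacy class \( C \), then \( H \) does as well. Indeed, \( \vp(C) \) lies in a single conjugacy class of \( H \), and \( \vp(C) \) is dense: for any nonempty open \( U \subseteq H \), surjectivity makes \( \vp^{-1}(U) \) nonempty and continuity makes it open, so \( \vp^{-1}(U) \) meets the dense set \( C \). Hence the conjugacy class of \( \vp(x) \) in \( H \), which contains the dense set \( \vp(C) \), is itself dense. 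Applying this to \( \vp \) produces a dense conjugacy class in \( \Homeo(E) \), as desired.

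I expect the only genuine subtlety to be the alignment carried out in the first paragraph: one must confirm that the abstract topological hypotheses on \( E \) match the end-space hypotheses of Theorem~\ref{thm:main} for the genus-zero realization, which is exactly where the choice of genus zero (forcing an empty set of ends accumulated by genus) does the work. Everything else---the realization theorem, the continuous surjection onto \( \Homeo(E) \), and the pushforward of a dense conjugacy class---is either quoted or elementary.
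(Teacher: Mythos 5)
Your proposal is correct and follows exactly the route the paper intends: realize \( E \) as the end space of a genus-zero 2-manifold (so that the set of non-planar ends is empty and the Mann--Rafi notions reduce to purely topological conditions on \( E \)), apply Theorem~\ref{thm:main}, and push the dense conjugacy class forward through the continuous surjection \( \mcg(S) \to \Homeo(\sE(S)) \). The paper leaves this as an immediate consequence of the remarks preceding the corollary, and your write-up supplies precisely those details, including the one genuine subtlety (the genus-zero choice) that makes the hypotheses align.
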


As a specific example of Corollary~\ref{cor:zero-dimensional}, we recover the fact that the symmetric group on a countably infinite set has the Rokhlin property---it is known that this symmetric group has a comeager conjugacy class \cite{TrussGeneric}.
More generally, Corollary~\ref{cor:zero-dimensional} gives an uncountable family of examples of compact metric spaces whose homeomorphisms groups have the Rokhlin property.
Constructing such spaces was the goal of Glasner--Weiss in \cite{GW}.
The examples they produced were the Cantor Set, the Hilbert cube, and even-dimensional spheres (with the restriction to orientation-preserving homeomorphisms). 

\subsection*{Motivation}
A Polish group \( G \) has the \emph{automatic continuity property}, or \emph{ACP}, if every homomorphism from \( G \) to a separable topological group is continuous (see \cite{RosendalAutomatic} for a survey). 
Groups with the ACP exhibit a deep connection between their algebra and topology.
For example, the homeomorphism group of any closed manifold \cite{RosendalAutomatic2,MannAutomatic} have the ACP.

A group \( G \) has \emph{ample generics} if for each \( n \in \bn \) there is a comeager orbit of the diagonal conjugacy action of \( G \) on \( G^n \).
In \cite{KechrisRosendal}, Kechris--Rosendal show that if a group has ample generics, then it has the ACP.
For example, the homeomorphism group of the Cantor set has ample generics \cite{Kwiatkowska}, and hence the ACP.
Observe that in order to have ample generics, a group must have a comeager conjugacy class and hence also a dense conjugacy class. 

Unfortunately, as a consequence of Theorem~\ref{thm:meager}, we see that no mapping class group of a non-simply connected orientable 2-manifold has ample generics, which we record here.

\begin{Cor}
The mapping class group of an orientable 2-manifold has ample generics if and only if it is trivial.
\end{Cor}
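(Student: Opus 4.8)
The plan is to derive this corollary directly from Theorem~\ref{thm:meager}, exploiting the fact that ample generics is strictly stronger than possessing a comeager conjugacy class. First I would unwind the definition: a group \( G \) has ample generics if for every \( n \in \bn \) the diagonal conjugation action of \( G \) on \( G^n \) admits a comeager orbit. Specializing to \( n = 1 \), the diagonal action on \( G^1 = G \) is nothing but the conjugation action of \( G \) on itself, so a comeager orbit in this case is exactly a comeager conjugacy class. Hence any group with ample generics has, in particular, a comeager conjugacy class.

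With this observation in hand, the forward implication is immediate. If \( \mcg(S) \) has ample generics, then it has a comeager conjugacy class, and Theorem~\ref{thm:meager} forces \( \mcg(S) \) to be trivial.

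For the converse I would verify directly that the trivial group satisfies the defining condition. If \( \mcg(S) \) is trivial, then for each \( n \) the space \( \mcg(S)^n \) is a single point, whose unique conjugation orbit is the entire space and is therefore (vacuously) comeager. Thus the trivial group has ample generics for every \( n \), completing the equivalence.

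I do not expect any genuine obstacle here: all of the substantive content is carried by Theorem~\ref{thm:meager}, and the corollary is a formal consequence of the implication ``ample generics \( \Rightarrow \) comeager conjugacy class.'' The only point meriting a moment's care is confirming that the \( n = 1 \) instance of the diagonal action coincides with ordinary conjugation, which is immediate from the definitions.
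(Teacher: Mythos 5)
Your argument is correct and is exactly the paper's: the introduction observes that ample generics implies a comeager conjugacy class (the $n=1$ case of the diagonal action), so the corollary follows immediately from Theorem~\ref{thm:meager}, with the trivial group handled vacuously. No differences worth noting.
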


Note, this result does not preclude a mapping class group from having the ACP; it only prevents the approach using ample generics.
In fact, the question of which mapping class groups have the ACP has turned out to be quite complicated:
it is known that the mapping class group of the 2-sphere minus a Cantor set has the ACP \cite{MannAutomatic2} and the mapping class group of the Loch Ness monster does not \cite{Domat}.
(Mann \cite{MannAutomatic2} gives other examples of mapping class groups with and without the ACP.)

\subsection*{Outline}

In Section~\ref{sec:preliminaries} we gather necessary preliminaries, and in particular recall the work of Mann--Rafi \cite{MannRafi}.
Their work allows us to split the proof of Theorem~\ref{thm:main} into three main cases.  
These three cases are considered in the following three sections: Section~\ref{sec:non-displaceable}, Section~\ref{sec:self-similar}, and Section~\ref{sec:doubly-pointed}.
In Section~\ref{sec:proof} we prove Theorem~\ref{thm:main}, and we finish in Section~\ref{sec:meager} by proving Theorem~\ref{thm:meager}.

\subsection*{Acknowledgments}
The authors thank Gregory Conner for sharing the definition of and articles on cm-slender groups. 
The authors thank Dan Margalit and Benjamin Weiss for comments on a draft of the article.
The authors also thank Justin Malestein and Jing Tao for pointing out an error in an earlier version of the article. 
Finally, the authors thank the referee for their careful reading of the article and their helpful comments.

The first author acknowledges support from the National Science Foundation under Grant No. DGE-1650044 and Grant No. DMS-2002187. The second author acknowledges support from PSC-CUNY Award \#63524-00 51. 
The authors also acknowledge the support of the American Mathematical Society and Simons foundation:
this project began during a visit of the first author to the second author's institution that was supported by the second author's AMS-Simons Travel Grant. 


\section{Preliminaries}
\label{sec:preliminaries}

With the goal of making this article more accessible to those studying groups from either  a topological or geometric perspective, we give a detailed preliminary section.
We first establish the equivalence between the Rokhlin property and topological mixing in the setting of Polish groups. Then we briefly recall some basic surface and mapping class group theory.  We finish with describing the binary relation on the space of ends of a surface introduced by Mann and Rafi.
For more background on mapping class groups of infinite-type surfaces, we refer the reader to the survey \cite{AramayonaVlamis}.

\subsection{Polish groups and the joint embedding property}

A topological space is \emph{Polish} if it separable and completely metrizable; a topological group is \emph{Polish} if it is Polish as a topological space.
In this subsection, we give a standard dynamical reinterpretation of the Rokhlin property in the setting of Polish groups that we will use throughout the article to establish Theorem~\ref{thm:main}.

\begin{Def}
\label{def:jep}
A topological group \( G \) has the \emph{joint embedding property}, or \emph{JEP}, if given any two nonempty open sets \( U \) and \( V \) in \( G \) there exists \( g \in G \) such that \( U \cap V^g \neq \varnothing \) (here, \( V^g = \{ gfg^{-1} \co f \in V\} \)).
This is equivalent to the conjugation action of \( G \) on itself being topologically transitive.
\end{Def}

The following theorem is a common trick to find a dense conjugacy class in a Polish group.
For convenience, we provide the proof given by Kechris and Rosendal in \cite[Theorem~2.1 and its following remark]{KechrisRosendal}.

\begin{Thm}
\label{thm:ergodic}
Let \( G \) be a Polish group.
Then, \( G \) has a dense conjugacy class if and only if \( G \) has the JEP.
\end{Thm}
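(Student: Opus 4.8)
The plan is to read the JEP as exactly the statement that the conjugation action of \( G \) on itself is topologically transitive, and then run the standard argument that, on a Baire space, topological transitivity of a continuous action is equivalent to the existence of a dense orbit; here the orbits are precisely the conjugacy classes, so a dense orbit is the same as a dense conjugacy class.

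For the direction that a dense conjugacy class implies the JEP, I would argue directly and algebraically. Suppose the conjugacy class \( C \) of some \( g_0 \in G \) is dense, and let \( U, V \subseteq G \) be nonempty and open. Density of \( C \) lets me choose \( h, k \in G \) with \( hg_0h^{-1} \in U \) and \( kg_0k^{-1} \in V \). Putting \( g = hk^{-1} \), the element \( hg_0h^{-1} \) lies in \( U \) and satisfies \( hg_0h^{-1} = g\,(kg_0k^{-1})\,g^{-1} \in V^g \), so \( U \cap V^g \neq \varnothing \) and the JEP holds. This direction uses none of the Polish hypotheses.

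For the converse, I would exploit second countability together with the Baire category theorem. Fix a countable basis \( \{V_n\}_{n \in \bn} \) of nonempty open subsets of \( G \), and for each \( n \) form the conjugation-saturation \( W_n = \bigcup_{g \in G} V_n^g \), which is open as a union of open sets. The JEP says exactly that \( W_n \) meets every nonempty open set: given nonempty open \( U \), a witness \( g \) with \( U \cap V_n^g \neq \varnothing \) yields \( U \cap W_n \neq \varnothing \), so each \( W_n \) is dense and open. Since \( G \) is Polish, hence completely metrizable and therefore Baire, the intersection \( \bigcap_{n} W_n \) is comeager and in particular nonempty. Any \( g_0 \) in this intersection has the property that for every \( n \) there is some \( g \) with \( g^{-1}g_0g \in V_n \); that is, the conjugacy class of \( g_0 \) meets every basic open set and so is dense.

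The only real content is this last appeal to Baire category, which upgrades ``each saturation \( W_n \) is dense open'' to ``a single element \( g_0 \) is conjugated into every basic open set''; everything else is bookkeeping with the definition \( V^g = gVg^{-1} \). I therefore expect no serious obstacle, the one point to state cleanly being the equivalence between ``the JEP holds for all pairs \( U, V \)'' and ``each \( W_n \) is dense,'' which is immediate once one observes that \( g_0 \in W_n \) means precisely that the conjugacy class of \( g_0 \) meets \( V_n \).
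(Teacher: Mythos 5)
Your proposal is correct and follows essentially the same route as the paper's proof: the same algebraic manipulation with \( g = h_1h_2^{-1} \) for the direction from a dense conjugacy class to the JEP, and the same Baire category argument applied to the open dense conjugation-saturations of a countable basis for the converse. No issues.
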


\begin{proof}
First assume that \( G \) has the JEP.
Let \( \mathscr B \) be a countable basis for \( G \), and, for each \( U \in \mathscr B \), define
\[
D_U = \bigcup_{g\in G} U^g.
\]
The JEP implies that \( D_U \) is dense in \( G \) and it is clear that \( D_U \) is open.
The Baire category theorem tells us that \( G \) is a Baire space and in particular the set 
\[
D = \bigcap_{U\in \mathscr B} D_U
\]
is dense.
By construction, the conjugacy class of any element in \( D \) is dense.

For the converse, assume that the conjugacy class of \( g \) in \( G \) is dense.
Given open sets \( U_1 \) and \( U_2 \) of \( G \) there exist \( h_1 \) and \( h_2 \) in \( G \) such that \( h_igh_i^{-1} \in U_i \) for \( i \in \{1,2\} \).
It follows that \( U_1 \cap U_2^h \neq \varnothing \), where \( h = h_1h_2^{-1} \); hence, \( G \) has the JEP.
\end{proof}

\subsection{The classification of surfaces}

The standard reference for the material in this subsection is \cite{Richards}.

A \emph{2-manifold} (resp. \emph{surface}) is a second-countable Hausdorff topological space in which every point has a neighborhood homeomorphic to an open subset of the plane (resp. the closed half-plane). The \emph{boundary} of a surface \( S \), denoted \( \partial S \), is the set of non-manifold points; the \emph{interior} of \( S \) is the set of manifold points, namely \( S \ssm \partial S \).
A surface is of \emph{finite type} if it can be realized as a compact surface with a finite number of points removed from its interior. 
A 2-manifold is of \emph{infinite type} if it is not of finite type.
A surface is \emph{planar} if it is homeomorphic to a subset of \( \br^2 \); equivalently, a surface is planar if it has genus zero and it is not homeomorphic to the 2-sphere.

An \emph{exiting sequence} in a manifold \( M \) is a sequence of connected open subsets \( \{\Omega_n\}_{n\in\bn} \) such that, for every \( n \in \bn \), \( \partial \Omega_n \) is compact, \( \Omega_{n+1} \subset \Omega_n \), and \( \bigcap_{n\in\bn} \Omega_n = \varnothing \). 
Two exiting sequences \( \{\Omega_n\}_{n\in\bn} \) and \( \{\Omega'_n\}_{n\in\bn} \) are equivalent if for every \( n \in \bn \) there exists \( m \in \bn \) such that \( \Omega_m \subset \Omega'_n \) and \( \Omega'_m \subset \Omega_n \).
An \emph{end} of \( M \) is the equivalence class of an exiting sequence; let \( \sE(M) \) denote the set of all such equivalence classes.

We now explain how to topologize \( \sE(M) \).
Given a  subset \( \Omega \) of \( M \) with compact boundary, define
\[
\widehat \Omega = \{ e= [\{\Omega'_n\}_{n\in\bn}] \in \sE(M) : \text{there exists } n \in \bn \text{ such that } \Omega'_n \subset \Omega\}.
\]
The space of ends of \( M \), also denoted \( \sE(M) \), is the set of all ends of \( M \) equipped with the topology generated by the sets of form \( \widehat \Omega \) with \( \Omega \subset M \) open with compact boundary.
With this topology, the space of ends of a manifold is compact, totally disconnected, second countable, and Hausdorff; in particular, it is homeomorphic to a closed subset of the Cantor set. 

If \( \Omega \) is an open subset of \( M \) with compact boundary and \( e \) is an end  of \( M \) such that \( e \in \widehat \Omega \), then we say that \( \Omega \) is a \emph{neighborhood} of \( e \) in \( M \). 
An end of a surface is \emph{planar} if it has a neighborhood in the surface that is homeomorphic to an open subset of the plane; otherwise, it is \emph{non-planar} and, in an orientable surface, every neighborhood of the end in the surface has infinite genus.
The set of non-planar ends, denoted \( \sE_{np} \), is a closed subset of \( \sE \). 
With this setup, we can now state the classification of connected orientable surfaces with compact boundary.

\begin{Thm}
Let \( S \) and \( S' \) be connected orientable surfaces of the same (possibly infinite) genus and with (possibly empty) compact boundary with the same number of boundary components.
Then, \( S \) and \( S' \) are homeomorphic if and only if there exists a homeomorphism \( \sE(S) \to \sE(S') \) sending \( \sE_{np}(S) \)  onto \( \sE_{np}(S') \). 
\end{Thm}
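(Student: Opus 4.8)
The plan is to dispatch the forward implication by functoriality of the end-space construction and to prove the converse by an exhaustion argument resting on the classification of \emph{compact} orientable surfaces with boundary. For the forward direction, a homeomorphism $h\co S\to S'$ carries exiting sequences to exiting sequences compatibly with the equivalence relation, hence induces a bijection $\sE(h)\co\sE(S)\to\sE(S')$; since $h$ sends each basic set $\widehat\Omega$ to $\widehat{h(\Omega)}$, this bijection is a homeomorphism, and since $h$ restricts to homeomorphisms between end neighborhoods it sends planar neighborhoods to planar neighborhoods and therefore carries $\sE_{np}(S)$ onto $\sE_{np}(S')$.

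For the converse, suppose $\phi\co\sE(S)\to\sE(S')$ is a homeomorphism with $\phi(\sE_{np}(S))=\sE_{np}(S')$, that the genera agree, and that $S$ and $S'$ have the same (necessarily finite) number of boundary circles. Capping every boundary circle of $S$ and of $S'$ with a disk produces boundaryless $2$-manifolds with unchanged end spaces, non-planar ends, and genus; a homeomorphism between the capped manifolds that matches capping disks restricts to the map we want, so I may assume $S$ and $S'$ are boundaryless. I then write $S=\bigcup_n P_n$ as an increasing union of compact connected subsurfaces with $\partial P_n$ a finite union of circles, arranged so that the components of $S\ssm\mr P_n$ are neighborhoods of a finite clopen partition of $\sE(S)$ that refines as $n$ grows. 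Because $\sE(S)$ and $\sE(S')$ are compact, second-countable, and totally disconnected, $\phi$ transports each such partition to a clopen partition of $\sE(S')$; I use these to build a parallel exhaustion $S'=\bigcup_n P'_n$ whose complementary regions correspond under $\phi$ to those of the $P_n$, carrying non-planar ends to non-planar ends, interleaving the two constructions (a back-and-forth) so that both $\{P_n\}$ and $\{P'_n\}$ stay cofinal.

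I now build homeomorphisms $h_n\co P_n\to P'_n$ with $h_{n+1}|_{P_n}=h_n$ by induction. The collar $P_{n+1}\ssm\mr P_n$ is a compact orientable surface with boundary, and by construction the corresponding collar in $S'$ has the same number of boundary circles in each complementary region; distributing handles so that the genus accumulating in each region agrees on the two sides---none at planar ends and, in the infinite-genus case, all of it pushed out to the non-planar ends matched by $\phi$---makes the paired collars agree in genus as well. The classification of compact orientable surfaces with boundary then furnishes a homeomorphism between paired collars extending $h_n$ on $\partial P_n$, and $h=\bigcup_n h_n$ is a bijection $S\to S'$. It is a homeomorphism because it is a local homeomorphism on interiors and the cofinality of the exhaustions together with the $\phi$-matching forces continuity, with continuous inverse, across the ends.

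The main obstacle is arranging the two exhaustions to be genuinely \emph{compatible}, so that each pair of corresponding collars is homeomorphic; this is precisely where all three hypotheses are consumed. The boundary-circle counts are reconciled by refining the clopen partitions, but the genus of each individual collar must also be made to match, which in the infinite-genus case forces the handles to accumulate at exactly the non-planar ends singled out by $\phi$. Controlling this genus distribution region by region, rather than only in aggregate, is the delicate point; once it is secured, the per-stage extension is a routine appeal to the classification of compact surfaces.
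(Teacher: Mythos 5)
The paper does not prove this statement: it is the classical Ker\'ekj\'art\'o--Richards classification of surfaces, quoted from \cite{Richards} without proof. Your sketch is essentially Richards' own argument---forward direction by functoriality of the end construction, converse by capping boundary circles, building compatible exhaustions via a back-and-forth on clopen partitions of the end spaces, and applying the classification of compact surfaces with boundary collar by collar---so it is correct as an outline and matches the cited source's approach. The one load-bearing fact you leave implicit is that a complementary region with compact boundary has infinite genus if and only if \( \widehat\Omega \) meets \( \sE_{np} \) (if every end in \( \widehat\Omega \) were planar, finitely many planar end-neighborhoods would cover \( \widehat\Omega \) by compactness, leaving only a finite-type remainder); this equivalence is exactly what allows your region-by-region genus bookkeeping to close up, and a complete write-up would need to state and use it explicitly.
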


As an example of this classification in action, a pair of 2-manifolds is depicted in Figure~\ref{fig:S22}. 
Since each has a single end, and  both are non-planar, the pair of 2-manifolds are in fact homeomorphic.

\begin{figure}[h]
 \labellist
 \small\hair 2pt

 \endlabellist  
\centering
\includegraphics[width=.8\textwidth]{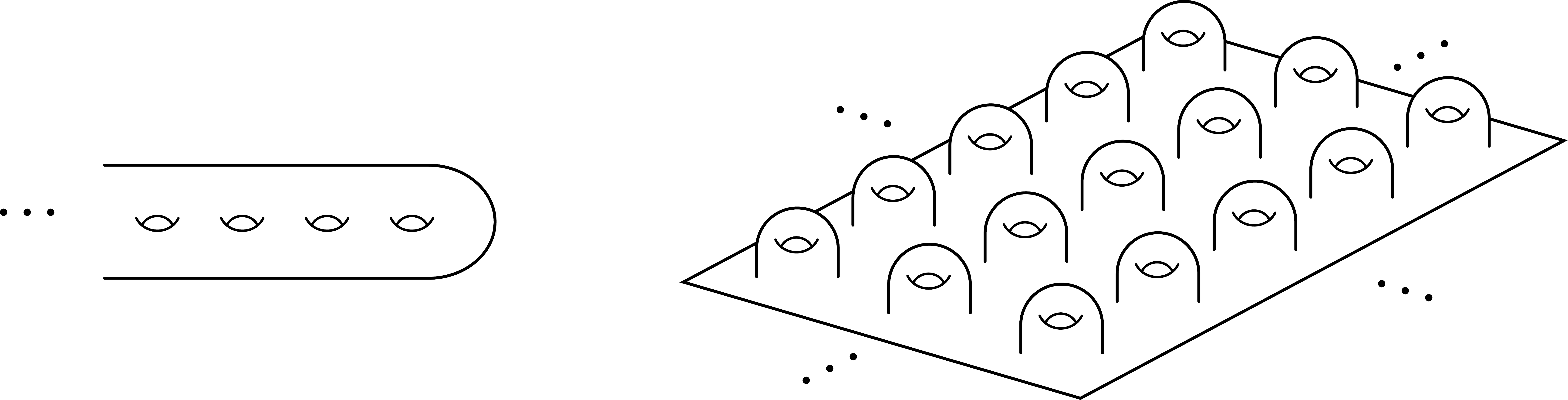}
\caption{Two realizations of the Loch Ness monster surface.}
\label{fig:S22}
\end{figure}

\subsection{Mapping class groups and curves on surfaces}
\label{sec:mcg}

We now introduce mapping class groups and facts about their topology (see the recent survey \cite{AramayonaVlamis} for further details regarding topology of mapping class groups).
We also introduce curve graphs, subsurface projections, and Alexander systems.

\subsubsection{Mapping class groups and their topology}
The \emph{mapping class group}, denoted \( \mcg(S) \), of an orientable surface \( S \) is the group of homotopy classes of orientation-preserving homeomorphisms \( S \to S \) with the additional restriction that every homeomorphism and homotopy fix the boundary of \( S \) pointwise. 
We denote the group of orientation-preserving homeomorphisms \( S \to S \) fixing \( \partial S \) pointwise by \( \Homeo_\partial^+(S) \); when \( \partial S = \varnothing \) (or if we want to drop the assumption on \( \partial S \)), we simply write \( \Homeo^+(S) \). 
We equip \( \Homeo_\partial^+(S) \) with the compact-open topology and equip \( \mcg(S) \) with the corresponding quotient topology. 
With this topology, the mapping class group is Polish.
Note that the mapping class group of a surface with compact boundary is discrete if and only if the surface is of finite type.

It will be useful to have a more combinatorial description of the topology of \( \mcg(S) \).
A simple closed curve on a surface is \emph{essential} if  no component of its complement is homeomorphic to a disk, once-punctured disk, or annulus.
Let \( \cc(S) \) denote the set of isotopy classes of essential simple closed curves on \( S \).
Given a subset \( A \) of \( \cc(S) \), we define the subset \( U_A \subset \mcg(S) \) by
\[
U_A = \{ \vp \in \mcg(S) : \vp(a) = a \text{ for all } a \in A \}.
\]
The elements of \( \{U_A : A \subset \cc(S) \text{ and } |A|< \infty\} \) together with their \( \mcg(S) \)-translates form a basis for the topology on \( \mcg(S) \) defined above whenever \( S \) has empty boundary and \( \mcg(S) \) has trivial center (in the case of boundary, it is possible to slightly modify the above definition, see \cite[Section 2.4]{APV}).
Note that there are only finitely many connected orientable 2-manifolds whose mapping class groups have nontrivial center; in particular, the center of the mapping class group of an infinite-type 2-manifold is always trivial \cite[Proposition~2]{LanierCenters}, and there are only finitely many finite-type 2-manifolds whose mapping class group has nontrivial center \cite[Theorem~5.6]{ParisGeometric}. 

\subsubsection{Curve graphs}
All of our proofs will rely on studying the intersection of essential curves. 
Given two elements \( a,b \in \cc(S) \),   their \emph{geometric intersection number} is the quantity \[ i(a,b) = \min\{ |\alpha \cap \beta| : \alpha \in a, \beta \in b \}, \]
where \( \alpha \) and \( \beta \) are representatives of \( a \) and \( b \).
Two simple closed curves are in \emph{minimal position} if they minimize the geometric intersection number of their homotopy classes.
We say that two subsurfaces \( \Sigma_1 \) and \( \Sigma_2 \) of \( S \) have nontrivial geometric intersection if there exist \( c_1,c_2 \in \cc(S) \) such that \( c_i \) has a representative contained in \( \Sigma_i \) and \( i(c_1,c_2) \neq 0 \); equivalently, \( \Sigma_1 \) and \( \Sigma_2 \) have trivial geometric intersection if \( \Sigma_1 \) is homotopic to a subsurface disjoint from \( \Sigma_2 \).  

It will be helpful for us to consider a standard metric on the set \( \cc(S) \) and recall a few properties.
We define a symmetric binary relation on \( \cc(S) \)  relating two distinct elements of \( \cc(S) \) if they have disjoint representatives in \( S \): the set \( \cc(S) \) with this relation is known as the \emph{curve graph} of \( S \). 
In turns out that if the graph \( \cc(S) \) has at least one edge, then it is connected (see \cite[Lemma 2.1]{MasurGeometryI}), which allows us to define the metric \( d_{\cc(S)} \) to be the path metric on \( \cc(S) \) associated with the above relation and where an edge is given length~1.
For instance, if \( S \) has at least five ends, five boundary components, or genus greater than one, then \( \cc(S) \) is connected.
(In the literature, it is  standard to modify the definition of the graph in the other cases so \( \cc(S) \) is connected, but this will not be relevant here.)
In this metric, \( \cc(S) \) is infinite diameter; this is implied by the following result of Masur--Minsky \cite{MasurGeometryI} that we will use throughout the paper: if \( S \) is a connected orientable finite-type surface such that \( \cc(S) \) is connected,  then given \( D > 0 \) there exists \( f \in \mcg(S) \) such that \( d_{\cc(S)}(c,f(c)) > D \) for all \( c \in \cc(S) \); for instance, \( f \) can be taken to be a sufficiently high power of any pseudo-Anosov homeomorphism, see \cite[Proposition~4.6]{MasurGeometryI}.

\subsubsection{Subsurface projections}
Let \( \Sigma \) be a subsurface of a surface \( S \) such that the embedding of \( \Sigma \) into \( S \) induces an embedding of \( \cc(\Sigma) \) into \( \cc(S) \); for instance, if each component of \( \partial \Sigma \) is an essential separating simple closed curve in \( S \). 
Let \( c \in \cc(S) \) be such that there exists \( a \in \cc(\Sigma)  \) satisfying \( i(a,c) \neq 0 \). 
Assuming that \( \partial \Sigma \) and \( c \) are in minimal position, a \emph{projection of \( c \) to \( \cc(\Sigma) \)} is defined to be any \( b \in \cc(\Sigma) \) obtained by taking a component of the boundary of a regular neighborhood of \( \alpha \cup \partial \Sigma \), where \( \alpha \) is a component of \( c \cap \Sigma \); such a projection \( b \) always exists.
Note that if \( c \in \cc(\Sigma) \), then \( b = c \). 
Also note that if \( b' \) is another projection of \( c \) to \( \cc(\Sigma) \), then \( d_{\cc(\Sigma)}(b,b') \leq 2 \) \cite[Lemma 2.2]{MasurGeometryII}.

\subsubsection{Alexander systems}
We end this subsection by introducing the notion of a stable Alexander system (see \cite[Section~2.3]{Primer} for an introduction to Alexander systems).
A subset \( A \) of \( \cc(S) \) is a \emph{stable Alexander system} if \( U_A \) is the center of \( \mcg(S) \) (in particular, with finitely many exceptions, if \( S \) has empty boundary, then \( U_A \) is the identity). 
All finite-type surfaces have finite stable Alexander systems---examples are constructed in the proof of \cite[Theorem~5.6]{ParisGeometric}.  
We note that if \( A \) is a stable Alexander system for \( S \) and \( c \in \cc(S) \ssm A \), then \( d_{\cc(S)}(A, c) > 1 \).
Though not directly used in this article, the existence of stable Alexander systems for infinite-type surfaces  \cite{HernandezAlexander} is fundamental to a number of the results mentioned so far.

\subsection{Ordering ends and a partition of surfaces}

In \cite{MannRafi}, Mann--Rafi introduce a binary relation on the space of ends of a surface that will be crucial to our proofs.
The content of this subsection is from \cite[Section 4]{MannRafi} and we refer the reader there for further details.
We will recall the key definitions here and an important consequence.

A homeomorphism \( f \in \Homeo^+(S) \) induces a homeomorphism in \( \Homeo(\sE(S)) \), which we denote \( \widehat f \). 
If \( f \) and \( g \) are homotopic, then \( \widehat f = \widehat g \); hence, we get a homomorphism \( \mcg(S) \to \Homeo(\sE(S), \sE_{np}(S)) \), where the codomain is the group of homeomorphisms of \( \sE(S) \) fixing \( \sE_{np}(S) \) setwise. 
In fact, this homomorphism is surjective.
Moreover, given a local homeomorphism \( \rho \co \mathscr U \to \mathscr V \) between two clopen subsets of \( \sE(S) \), \( \rho \) can be extended to a global homeomorphism \( \bar \rho \co \cE(S) \to \cE(S) \) by requiring \( \bar\rho|_{\mathscr V} = \rho^{-1} \). 
This allows us to freely move between discussing local and global homeomorphisms of the end space of a surface and homeomorphisms of the underlying surface.

Let \( \preceq \) denote the binary relation on \( \cE(S) \) given by \( y \preceq x \) if, for every open neighborhood \( \mathscr U \) of \( x \), there exists an open neighborhood \( \mathscr V \) of \( y \) and \( f \in \Homeo^+(S) \) such that \( \widehat f(\mathscr V) \subset \mathscr U \).
An end \( \mu \) is \emph{maximal} if, for any other end \( e \), \( \mu \preceq e \) implies \( e \preceq \mu \). 
The set of maximal ends of \( S \) is denoted by \( \mathscr M(S) \), or simply \( \mathscr M \).
For every end \( e \), there exists \( \mu \in \mathscr M \) such that \( e \preceq \mu \). 
We say two ends, \( e_1 \) and \( e_2 \), are \emph{comparable} if  \( e_1 \preceq e_2 \) or \( e_2 \preceq e_1 \). 

A subsurface \( \Sigma \) of \( S \) is \emph{displaceable} if there exists \( f \in \Homeo^+(S) \) such that \( f(\Sigma) \cap \Sigma = \varnothing \).
The space of ends of \( \sE(S) \) is \emph{self-similar} if given a decomposition \( \sE(S) = \sE_1 \sqcup \cdots \sqcup \sE_n \) into a finite disjoint union of clopen subsets,  there exists \( i \in \{1, \ldots, n\} \) such that \( \sE_i \) contains an open homeomorpic copy of \( \sE(S) \). 
The space of ends of \( \sE(S) \) is \emph{doubly pointed} if it has exactly two maximal ends. 
Putting together various results from \cite{MannRafi}, as described in \cite[Theorem 6.1]{APV2} and its proof, we have the following theorem:

\begin{Thm}
\label{thm:partition}
If \( S \) is an connected orientable  2-manifold in which every compact subsurface is displaceable, then either
\begin{enumerate}
\item \( \mathscr M \) is a singleton,
\item \( \mathscr M \) contains exactly two points, or 
\item \( \mathscr M \) is a Cantor space in which every maximal end is comparable.
\end{enumerate}
Moreover, \( \sE(S) \) is doubly pointed in the second case and self-similar in the others.
\end{Thm}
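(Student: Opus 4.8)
The plan is to translate everything into the topology of the set \( \mathscr M \) of maximal ends sitting inside \( \sE(S) \), using the Mann--Rafi preorder \( \preceq \), and to let the displaceability hypothesis supply self-similarity. A convenient first reduction: by the extension property recalled above, any homeomorphism between clopen subsets of \( \sE(S) \) is realized by an element of \( \Homeo^+(S) \), and such homeomorphisms preserve \( \preceq \) and hence carry \( \mathscr M \) onto \( \mathscr M \). So I can test the entire statement inside \( \sE(S) \), returning to the surface only to invoke displaceability.

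First I would pin down the coarse topology of \( \mathscr M \). It is nonempty (displaceability rules out compact \( S \), so \( S \) has ends, and every end lies below a maximal one), and I would verify that it is closed in \( \sE(S) \), which is part of the Mann--Rafi structure theory. As a closed subset of \( \sE(S) \), the set \( \mathscr M \) is compact, metrizable, and zero-dimensional, so by Brouwer's characterization it is either finite or contains a Cantor set; the whole theorem then amounts to determining which configurations actually occur.

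The key step is to show that displaceability forces self-similar neighborhoods around the \emph{comparable} maximal ends. Concretely, I would prove: if a maximal end \( \mu \) is comparable to some maximal end distinct from it, then \( \mu \) has a clopen neighborhood \( \mathscr U \) that is self-similar, built by using the defining push-in property of \( \preceq \) to embed arbitrarily large clopen pieces into \( \mathscr U \), with displaceability of compact subsurfaces guaranteeing that no non-displaceable core obstructs the required homeomorphisms. Given such a self-similar \( \mathscr U \), a standard self-similarity argument produces inside \( \mathscr U \) a proper clopen homeomorphic copy of \( \mathscr U \) avoiding \( \mu \); since the realizing homeomorphism carries \( \mathscr M \) onto \( \mathscr M \), this copy contains a maximal end distinct from \( \mu \), so \( \mu \) is not isolated in \( \mathscr M \). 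Propagating this, if all maximal ends are pairwise comparable then (being maximal, comparable means equivalent) \( \mathscr M \) has no isolated points, hence is a singleton or a perfect compact zero-dimensional metrizable space, i.e.\ a Cantor set --- exactly cases (1) and (3), with comparability built in. In these two cases the same self-similar neighborhoods upgrade to self-similarity of \( \sE(S) \) itself: given a finite clopen decomposition, the part meeting \( \mathscr M \) carries a self-similar neighborhood of a maximal end and therefore an open homeomorphic copy of \( \sE(S) \).

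The remaining possibility is that two maximal ends are incomparable, which is the doubly-pointed regime and is case (2) by definition once the count is known. The hard part, and the step I expect to be the main obstacle, is showing that incomparability rigidly forces \emph{exactly} two maximal ends: one must rule out three pairwise-incomparable maximal ends, any larger finite set, and --- most delicately --- mixed configurations in which an isolated maximal end coexists with a Cantor set of comparable ones. This is where the soft topological and preorder arguments run out and the full Mann--Rafi analysis of how displaceability constrains the global structure of \( S \) is needed; I would expect the bookkeeping there, separating \( \sE(S) \) into incomparable clopen halves and showing each is self-similar with a single maximal end, to carry the real weight of the proof.
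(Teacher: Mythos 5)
First, a point of comparison: the paper does not prove Theorem~\ref{thm:partition} at all. It is quoted as an assembly of results from Mann--Rafi, ``as described in [APV2, Theorem 6.1] and its proof,'' so there is no in-paper argument to match your proposal against; the real content lives in \cite{MannRafi}. Judged on its own terms, your sketch contains a genuine error in its central step. You claim that if a maximal end \( \mu \) is comparable to a distinct maximal end, then \( \mu \) acquires a self-similar clopen neighborhood containing a proper clopen copy of itself that avoids \( \mu \), whence \( \mu \) is not isolated in \( \mathscr M \); from this you deduce that pairwise comparability forces \( \mathscr M \) to be a singleton or a Cantor set, and you identify case (2) with the incomparable regime. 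This is false. Take \( S \) to be the two-ended infinite-genus surface (Jacob's ladder): every compact subsurface is displaceable, \( \sE(S) = \mathscr M = \{e_1,e_2\} \), and the end-swapping homeomorphism shows \( e_1 \preceq e_2 \preceq e_1 \), so the two maximal ends are comparable (indeed equivalent) yet each is isolated. The failure in your argument is that the ``push-in'' property only gives a neighborhood \( \mathscr V \) of \( e_2 \) mapping \emph{into} a prescribed neighborhood of \( e_1 \); when those neighborhoods are singletons the image is all of \( \mathscr U \), and no proper clopen sub-copy avoiding \( \mu \) exists. So the trichotomy cannot be organized around comparable versus incomparable: the doubly pointed case occurs with comparable maximal ends, and the correct dividing line (as in Mann--Rafi) is \( |\mathscr M| = 2 \) versus self-similarity of \( \sE(S) \).

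Beyond that error, the proposal is incomplete by its own admission: you explicitly defer the argument that no other configuration occurs --- three or more pairwise distinct equivalence classes of maximal ends, a finite class of size at least three, or an isolated maximal end coexisting with a Cantor set of others --- to ``the full Mann--Rafi analysis.'' That exclusion, together with the derivation of self-similarity of \( \sE(S) \) from displaceability of compact subsurfaces, is precisely the substance of the theorem, so invoking it amounts to assuming what is to be proved. The soft topological frame (closedness of \( \mathscr M \), the finite-or-Cantor dichotomy for compact zero-dimensional spaces, realization of end-space homeomorphisms by elements of \( \Homeo^+(S) \)) is fine, but it does not carry the weight here.
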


We will use Theorem~\ref{thm:partition} to break the proof of Theorem~\ref{thm:main} into three main cases. We dedicate a section to each, and then prove the main theorem in Section~\ref{sec:proof}.

\subsection{Notational conventions}

Before continuing, we introduce some general conventions in our notation.
We will regularly be working with three classes of topological spaces: surfaces, their end spaces, and their mapping class groups.
We generally use \( S \) to denote a surface and capital Greek letters for subsets of that surface---generally, \( \Sigma \) and \( \Omega \). 
We will use capital script Roman letters---generally, \( \mathscr U, \mathscr V, \) and \( \mathscr W \)---for subsets of end spaces  of surfaces.
We will use capital Roman letters---generally \( U, V \), and \( W \)---for subsets of groups. 

For the sake of simplifying notation, we will routinely abuse notation and conflate a mapping class with a representative homeomorphism and, similarly, conflate an isotopy class of a simple closed curve with a representative on the surface.


\section{Compact non-displaceable subsurfaces}
\label{sec:non-displaceable}

A subsurface \( \Sigma \) of a surface \( S \) is \emph{non-displaceable} if \( f(\Sigma) \cap \Sigma \neq \varnothing \) for every \( f \in \Homeo^+(S) \). 
An important example to note is that every surface with positive, finite genus contains a compact non-displaceable subsurface, namely any compact subsurface of the same genus.
It is this fact, together with Theorem~\ref{thm:nondisplaceable}, that makes the hypothesis on genus in Theorem~\ref{thm:main} necessary.
Similarly, other than the plane and the open annulus, a planar 2-manifold with finitely many isolated ends contains a compact non-displaceable surface, namely any compact subsurface such that  each complementary component of the subsurface is a neighborhood of at most one isolated end.
In addition to these examples, there are  other sources of non-displaceable subsurfaces, see \cite[Section 2]{MannRafi}.

\begin{Thm}
\label{thm:nondisplaceable}
The mapping class group of a connected orientable 2-manifold containing a  proper compact non-displaceable subsurface does not have the Rokhlin property.
\end{Thm}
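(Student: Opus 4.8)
The plan is to show that if $S$ contains a proper compact non-displaceable subsurface, then $\mcg(S)$ fails the JEP, which by Theorem~\ref{thm:ergodic} is equivalent to failing the Rokhlin property. The key idea is that non-displaceability of a subsurface $\Sigma$ gives a \emph{robust, conjugation-invariant obstruction}: every homeomorphism must move $\Sigma$ to a subsurface meeting $\Sigma$, and this should translate into a uniform bound on how much a mapping class can move curves supported in $\Sigma$—a bound that is preserved under conjugation but that distinguishes two well-chosen open sets in $\mcg(S)$.

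Let me sketch the two open sets. Since $\Sigma$ is a proper compact subsurface, I can fix a finite collection of essential curves $A \subset \cc(S)$ that ``detects'' $\Sigma$ (e.g.\ its boundary together with a filling system, so that $U_A$ constrains mappings near $\Sigma$), and take $U = U_A$ to be the basic open neighborhood of the identity fixing $A$. This is the set of mapping classes that fix $\Sigma$ (up to isotopy) curve-by-curve. For the second set $V$, I want a mapping class $f$ that moves curves inside $\Sigma$ very far in the curve graph metric: using the Masur--Minsky result quoted in the preliminaries, I take $f$ to be a high power of a pseudo-Anosov supported on $\Sigma$ (or on a finite-type piece containing $\Sigma$), so that $d_{\cc(S)}(c, f(c)) > D$ for all relevant curves $c$, where $D$ is a large constant. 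Then I let $V = U_B f$ be a small basic neighborhood of $f$.

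\textbf{The conjugation step.} The heart of the argument is to show $U \cap V^g = \varnothing$ for every $g \in \mcg(S)$, i.e.\ no conjugate of an element near $f$ can fix $A$. The crux is that for \emph{any} $g$, non-displaceability forces $g(\Sigma)$ to intersect $\Sigma$ geometrically, so there is a curve $a \in \cc(\Sigma)$ with $i(a, g(\Sigma)) \neq 0$; pushing $a$ forward and applying subsurface projection to $g(\Sigma)$, the map $g f^n g^{-1}$ must still displace some curve by a large curve-graph distance. Concretely, I would use the subsurface projection to $\cc(g\Sigma)$: conjugating a pseudo-Anosov on $\Sigma$ by $g$ yields a pseudo-Anosov on $g\Sigma$, which moves curves projecting nontrivially to $g\Sigma$ by a large amount, contradicting the requirement that an element of $U$ fix the detecting curves $A$ of $\Sigma$. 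The key inequality is that an element of $U$ moves curves in $A$ by distance $0$, while any conjugate of $f$ moves some curve intersecting $\Sigma$ by more than the distance $D$, and these cannot be reconciled once $D$ exceeds the diameter bound coming from the finitely many curves in $A$.

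\textbf{The main obstacle} I anticipate is making the subsurface-projection estimate uniform over \emph{all} $g \in \mcg(S)$, since $g\Sigma$ ranges over infinitely many positions. The non-displaceability hypothesis guarantees $g\Sigma \cap \Sigma \neq \varnothing$, but I must upgrade ``sets intersect'' to ``there is a curve with controlled, nonzero projection'' and verify that the pseudo-Anosov's large translation distance survives conjugation in a way that genuinely obstructs membership in $U_A$. Choosing the detecting set $A$ and the displacement threshold $D$ correctly—so that the finite data $A$ constrains curves near $\Sigma$ while $f$ violates that constraint after any conjugation—is the delicate point, and I would address it by quantifying everything in terms of $d_{\cc(\Sigma')}$ for a fixed finite-type subsurface $\Sigma'$ containing $\Sigma$ and using the bounded-diameter property of the projection of $A$.
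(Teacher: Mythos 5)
Your proposal follows essentially the same route as the paper's proof: both reduce to failure of the JEP, take \( U = U_A \) for a finite ``detecting'' system \( A \) for \( \Sigma \) (the paper uses a stable Alexander system after enlarging \( \Sigma \) to a finite-type subsurface with enough separating boundary components), take \( V \) to be a basic neighborhood of a mapping class supported on \( \Sigma \) with large translation distance on \( \cc(\Sigma) \), and derive the contradiction from non-displaceability together with the bounded-diameter property of subsurface projections to \( \cc(g(\Sigma)) \). The uniformity issue you flag is resolved exactly as you suggest: the threshold only needs to exceed the universal constant \( 2 \) bounding the distance between any two projections of the same curve, so no estimate depending on \( g \) is required.
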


\begin{proof}
Let \( S \) be an orientable surface and let \( \Sigma \) be a proper compact non-displaceable subsurface of \( S \). 
Note that under these hypotheses, \( S \) is neither the plane nor the sphere; hence, if \( S \) is of finite type, then \( \mcg(S) \) is discrete and nontrivial, in which case it cannot have a dense conjugacy class. 
We can therefore assume that \( S \) is of infinite type.
By possibly enlarging \( \Sigma \), we may assume that \( \Sigma \) is connected, that \( \Sigma \) admits a stable Alexander system, and that \( \partial \Sigma \) has at least five components, each of which is essential and separating.
(In order to guarantee these properties, \( \Sigma \) may no longer be compact, but it will still be of finite type.
The requirement on the number of boundary components is to guarantee that \( \mathcal C(\Sigma) \) is connected.)
Under these additional assumptions, the inclusion \( \Sigma \hookrightarrow S \) induces a monomorphism \( \mcg(\Sigma) \to \mcg(S) \).
We will show that there exist open sets \( U, V \subset \mcg(S)\) such that \(U \cap V^g = \varnothing\) for all \( g \in \mcg(S) \); in particular,  \( \mcg(S) \) does not have have the JEP, and hence, by Theorem~\ref{thm:ergodic}, does not have the Rokhlin property. 

Let \( A \subset \cc(S) \) be a stable Alexander system for \( \Sigma \).
Let \( f \) be a homeomorphism on the subsurface \( \Sigma \) such that \(d_{\cc(\Sigma)}(c,f(c)) > 2 \) for all \( c \in \mathcal{C}(\Sigma) \). 
Let \( U=U_A \) and let \( V=fU_A \). 
Let \( g \) be an arbitrary element of \( \mcg(S) \) and let \( v \) be an arbitrary element of \( V \). 
We need to show that \( gvg^{-1} \not\in U \). 
Note that \( v \) has a representative that maps \( \Sigma \) to itself, so we can assume \(  g  v   g^{-1} \) maps \( g(\Sigma) \) to itself.

By the non-displaceability of \( \Sigma \) and the hypothesis on \( A \), there exists \( a \in A \) such that \( a \) and \( g(\Sigma) \) have nontrivial geometric intersection; let \( b \) be a projection of \( a \) to \( \cc(g(\Sigma)) \).
Now \( g^{-1}(b) \) is an element of \( \mathcal{C}(\Sigma) \), and since \( v \in V \), we must have that in \( \Sigma \), 
\[ d_{\cc(\Sigma)}(g^{-1}(b), vg^{-1}(b)) =  d_{\cc(\Sigma)}(g^{-1}(b),fg^{-1}(b)) > 2. \]
Applying \( g \), we see that in \( g(\Sigma) \), \( d_{g(\Sigma)}(b,gvg^{-1}(b)) > 2 \). 
Now suppose that \( gvg^{-1} \in U \), so that \(  gvg^{-1}(a) =a \).  
Then, it must be that \( gvg^{-1}(b) \) is a projection of \( a \) to \( g(\Sigma) \), in which case \( d_{g(\Sigma)}(b,gvg^{-1}(b))\leq 2 \); but, this is a contradiction, and therefore \( gvg^{-1} \notin U \).
Both \( v \) and \( g \) were arbitrary, so we conclude that \( U \cap V^g = \varnothing \)  for all \( g \in \mcg(S) \).
\end{proof}

\begin{Rem}
The mapping class groups of the closed disk and the once-punctured disk are trivial, and hence have the Rokhlin property.
Other than these two surfaces, 
Theorem~\ref{thm:nondisplaceable} also holds for connected orientable surfaces with at least one compact boundary component: this follows from the observation that every surface with a compact boundary component has a non-displaceable compact subsurface. 
\end{Rem}

\section{Self-similar end space}
\label{sec:self-similar}

In this section we will prove:

\begin{Thm}
\label{thm:self-similar}
Let \( S \) be a connected orientable non-compact 2-manifold with self-similar end space and in which every compact subsurface is displaceable.
The mapping class group of \( S \) has the Rokhlin property if and only if the set of maximal ends is a singleton. 
\end{Thm}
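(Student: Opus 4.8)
The plan is to work throughout with the joint embedding property rather than with conjugacy classes directly: by Theorem~\ref{thm:ergodic} it suffices to show that \( \mcg(S) \) has the JEP exactly when \( \mathscr M \) is a singleton. Since \( S \) is of infinite type with empty boundary, \( \mcg(S) \) has trivial center, so the pointwise curve-stabilizers \( U_A \) (for finite \( A \subset \cc(S) \)) and their translates form a basis; verifying the JEP then reduces to a finite combinatorial task. Given finite \( A, B \subset \cc(S) \) and mapping classes \( \phi, \psi \), I must find \( g \) and \( \theta \) in \( \mcg(S) \) with \( \theta(a) = \phi(a) \) for all \( a \in A \) and \( \theta(c) = g\psi g^{-1}(c) \) for all \( c \in g(B) \); such a \( \theta \) witnesses \( \phi U_A \cap (\psi U_B)^g \neq \varnothing \). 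Because \( S \) is self-similar with every compact subsurface displaceable, Theorem~\ref{thm:partition} forces \( \mathscr M \) to be either a singleton or a Cantor set of pairwise-comparable maximal ends, and these are the two cases to separate.

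For the forward direction assume \( \mathscr M = \{\mu\} \). I first enclose representatives of \( A \cup \phi(A) \) in a compact subsurface \( K \) and those of \( B \cup \psi(B) \) in a compact subsurface \( L \). Taking compact \( M \supseteq K \cup L \) and invoking displaceability gives \( g \) with \( g(M) \cap M = \varnothing \); in particular \( g(B) \) and \( g\psi(B) \) lie in \( g(L) \), disjoint from \( K \). It remains to construct \( \theta \). The idea is to realize the prescription near \( K \) and the prescription near \( g(L) \) by homeomorphisms with disjoint supports and multiply them; equivalently, by a change-of-coordinates argument, to check that the domain multicurve \( A \sqcup g(B) \) and the range multicurve \( \phi(A) \sqcup g\psi(B) \) have complements that are homeomorphic compatibly with their boundary labelings. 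The pieces cut off near \( K \) match via \( \phi \), and those near \( g(L) \) match via \( g\psi g^{-1} \); the only genuine content is that the remaining ``main'' complementary component matches on the two sides. This is exactly where uniqueness of the maximal end enters: the main component carries \( \mu \), so by self-similarity its homeomorphism type is insensitive to the finite cutting data and there is no freedom in how maximal ends distribute among components. Hence \( \theta \) exists and the JEP holds.

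For the reverse direction assume \( \mathscr M \) is a Cantor set; I must produce nonempty open \( U, V \subset \mcg(S) \) with \( U \cap V^g = \varnothing \) for all \( g \). The guiding principle is that the matching argument above now breaks: choosing a separating multicurve that splits \( \mathscr M \) into clopen pieces each containing maximal ends, one can prescribe behavior on curves so that the Cantor set of maximal ends is forced to distribute across the cut in a way no homeomorphism reconciles, and no displacement \( g \) repairs this, because every attempt to push the second prescription ``to infinity'' must itself pass near maximal ends. The hard part is making this quantitative and uniform over all \( g \): the obstruction is genuinely subtle, since it is detected neither by the action on the end space—for the Cantor tree surface \( \sE(S) \) is a Cantor set and \( \Homeo(\sE(S)) \) has the Rokhlin property—nor, at least in that motivating case, by a proper open normal subgroup, of which there is none; and because every finite-type subsurface here is displaceable, the curve-graph mechanism of Theorem~\ref{thm:nondisplaceable} does not apply verbatim. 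Isolating the correct pair \( (U,V) \) and certifying conjugacy-separation for all \( g \) simultaneously—presumably by pairing the distribution of maximal ends relative to a separating multicurve with a subsurface-projection estimate, but via a new mechanism rather than a non-displaceable subsurface—is therefore the principal obstacle. By contrast, the doubly-pointed case is easy: the setwise action on the two maximal ends yields a continuous surjection \( \mcg(S) \to \bz/2 \), hence a proper open normal subgroup, which already precludes the Rokhlin property.
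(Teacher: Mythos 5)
Your forward direction follows the same strategy as the paper's proof of Lemma~\ref{lem:singleton-yes} (push one prescription off the other and multiply commuting homeomorphisms), but it elides the step that carries the content. To realize ``\( \theta(a)=\phi(a) \) for all \( a\in A \)'' by a homeomorphism whose support misses \( g(L) \), you must know that \( \phi \) agrees on \( A \) with some element supported in \( \Sigma_c \) for a separating curve \( c \) cutting off the maximal end; this is exactly the density of the subgroup \( G \) in Proposition~\ref{prop:top-generate}, and its proof is not a formality: one finds a copy of \( \Sigma_c \) disjoint from both \( \Sigma_c \) and \( \phi(\Sigma_c) \) (this is where self-similarity, the genus hypothesis, and the classification of surfaces enter), moves \( \Sigma_c \) there and then onto \( \phi(\Sigma_c) \), and factors. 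Note also that ``supported in a compact subsurface'' is too strong in general: on the flute surface a compactly supported homeomorphism fixes every end, so it cannot agree with a \( \phi \) that permutes the punctures cut off by a curve of \( A \); and your change-of-coordinates criterion in terms of complementary components only makes sense for multicurves, while \( A \) may consist of pairwise intersecting curves. These points are repairable, but they are the crux rather than bookkeeping.

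The reverse direction is a genuine gap. You correctly diagnose that the obstruction is subtle and that Theorem~\ref{thm:nondisplaceable} does not apply verbatim, but you stop at ``presumably by pairing the distribution of maximal ends with a subsurface-projection estimate, via a new mechanism''---that is the half of the theorem that needs proving, and no pair \( (U,V) \) is produced. The paper's Lemma~\ref{lem:cantor-no} does it as follows: using \( \sE\times\{1,\dots,5\}\cong\sE \) (Proposition~\ref{prop:Exn}), choose a compact planar \( \Sigma \) with five separating boundary curves \( b_1,\dots,b_5 \), each complementary component having end set homeomorphic to \( \sE \); take \( f\in\mcg(\Sigma) \) cyclically permuting the \( b_i \) and moving every curve of \( \cc(\Sigma) \) distance greater than \( 2 \); set \( U_1=fU_A \) with \( A \) a stable Alexander system for \( \Sigma \), and \( U_2=U_{\{b_1\}} \). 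For any \( g \), either \( g(b_1) \) is isotopic off \( \Sigma \), in which case it lies in a single complementary component \( \Omega_i \), every element of \( U_2^g \) preserves \( \Omega_i \) while every element of \( U_1 \) displaces it; or \( g(b_1) \) meets \( \Sigma \) essentially, and its subsurface projection to \( \cc(\Sigma) \) is moved distance greater than \( 2 \) by elements of \( U_1 \) but at most \( 2 \) by elements of \( U_2^g \). So a projection estimate does work after all; the new ingredients are taking \( U_2 \) to be the stabilizer of a single curve and using the five-fold self-similar decomposition to dispose of the disjoint case. Without this (or an equivalent) construction, one direction of the theorem remains unproved.
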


By Theorem~\ref{thm:partition}, there are two cases to consider in the proof of Theorem~\ref{thm:self-similar}: (1) the set of maximal ends is a singleton and (2) the set of maximal ends is a Cantor set.
Moreover, as already noted, every surface of positive finite genus has a compact non-displaceable subsurface, so we may assume that all surfaces in this section have either zero or infinite genus. 
Below, we break the two cases into two subsections, Section~\ref{sec:single} and Section~\ref{sec:cantor}, respectively, showing that in the first case mapping class groups have the JEP and that they do not in the second.
In Section~\ref{sec:example}, we give an explicit example of a mapping class whose conjugacy class is dense. 
We finish with the proof of Theorem~\ref{thm:self-similar} in Section~\ref{sec:proof4}.


\subsection{Unique maximal end}
\label{sec:single}

For the entirety of the subsection, \( S \) will denote a connected orientable 2-manifold satisfying: (1) \( S \) is either planar or infinite genus and (2) the end space \( \sE \) of \( S \) is self-similar with a unique maximal end, call it \( \mu \).

Given a separating simple closed curve \( c \) in \( S \), let \( \Omega_c \) denote the component of \( S\ssm c \) such that \( \mu \in \widehat{\Omega}_c \) and let \( \Sigma_c = S \ssm \Omega_c \). 
Let \( G \) denote the subgroup of \( \mcg(S) \) consisting of elements with a representative that restricts to the identity on \( \Omega_c \) for some separating curve \( c \). 
Observe that the set \[ \{\widehat{\Omega}_c : c \text{ a separating simple closed curve}\} \] is a neighborhood basis for \( \mu \) and therefore \( G \) is a subgroup.
If \( S \) is the Loch Ness monster surface, then \( G \) consists of the mapping classes which have a representative that is the identity outside of a compact set: in this case, it was shown in \cite[Theorem~4]{PatelVlamis} that \( G \) is dense in the mapping class group.
This example is the motivation for the following proposition.

\begin{Prop}
\label{prop:top-generate}
Let \( S \) and \( G \) be as above. 
\begin{enumerate}
\item
The subgroup \( G \) is dense in \( \mcg(S) \).
\item
Given any separating simple closed curve \( c \) in \( S \), there exists \( h \in \Homeo^+(S) \) such that \( h(\Sigma_c) \subset \Omega_c \). 
\end{enumerate}
\end{Prop}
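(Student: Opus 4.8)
The plan is to establish the two parts in the order (2) then (1), since part (2) isolates the main use of self-similarity and part (1) then reuses the same change-of-coordinates technology.

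For part (2), I would first work at the level of end spaces. Consider the clopen decomposition \( \sE = \widehat\Omega_c \sqcup \widehat\Sigma_c \), where \( \mu \in \widehat\Omega_c \). Self-similarity produces a clopen set \( \mathscr W \), contained in one of the two pieces, together with a homeomorphism \( \sE \to \mathscr W \). I claim \( \mathscr W \subset \widehat\Omega_c \): extending \( \sE \to \mathscr W \) to a global \( \bar\rho \in \Homeo(\sE) \) as recalled in the preliminaries, and using that maximality is preserved by homeomorphisms together with the hypothesis that \( \mu \) is the \emph{unique} maximal end, one gets \( \bar\rho(\mu) = \mu \in \mathscr W \), which forces \( \mathscr W \subset \widehat\Omega_c \). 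In particular \( \widehat\Omega_c \) contains a clopen copy \( \mathscr W_\Sigma \) of \( \widehat\Sigma_c \), and in the infinite-genus case this copy can be taken to respect non-planar ends. Since \( \widehat\Sigma_c \) and \( \mathscr W_\Sigma \) are disjoint, the local homeomorphism \( \widehat\Sigma_c \to \mathscr W_\Sigma \) extends to a global homeomorphism of \( \sE \) preserving \( \sE_{np} \), which by surjectivity of \( \mcg(S) \to \Homeo(\sE, \sE_{np}) \) is induced by some \( h \in \Homeo^+(S) \). Then \( \widehat h(\widehat\Sigma_c) = \mathscr W_\Sigma \subset \widehat\Omega_c \), and an isotopy pushing the subsurface \( h(\Sigma_c) \) across \( c \) yields \( h(\Sigma_c) \subset \Omega_c \).

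For part (1), I would use the basis for the topology of \( \mcg(S) \) given by \( \mcg(S) \)-translates of the sets \( U_A \) with \( A \subset \cc(S) \) finite. Unwinding, density of \( G \) amounts to: for every \( \phi \in \mcg(S) \) and every finite \( A \subset \cc(S) \), there is \( g \in G \) with \( g(a) \) isotopic to \( \phi(a) \) for all \( a \in A \). Since \( A \) and \( \phi(A) \) are finite, I would choose a separating curve \( c \) with \( \Omega_c \) a neighborhood of \( \mu \) disjoint from every curve in \( A \cup \phi(A) \); note \( \phi \) fixes \( \mu \) because \( \mu \) is the unique maximal end and maximality is a homeomorphism invariant. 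I would then seek \( g = \phi\psi \) with \( \psi \) supported in \( \Omega_c \), so that \( \psi \) fixes each \( a \in A \subset \Sigma_c \) and hence \( g(a) = \phi(a) \), while \( \psi \) is chosen so that \( \phi\psi \) restricts to the identity on a smaller neighborhood \( \Omega_{c'} \subset \Omega_c \cap \phi^{-1}(\Omega_c) \) of \( \mu \); this exhibits \( g \in G \).

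The main obstacle, common to both parts, is the passage from the end-space level to honest surface homeomorphisms with the required support. Concretely, in part (1) one must extend the prescribed map \( \phi^{-1} \) on \( \Omega_{c'} \) to a homeomorphism \( \psi \) of \( S \) that is the identity outside \( \Omega_c \); this is a change-of-coordinates extension that requires the complementary regions \( \Omega_c \ssm \Omega_{c'} \) and \( \Omega_c \ssm \phi^{-1}(\Omega_{c'}) \) to be homeomorphic rel boundary, and it is precisely self-similarity, together with the zero-or-infinite genus hypothesis that removes any genus bookkeeping, that guarantees such regions are interchangeable. I expect verifying this matching, and the analogous isotopy across \( c \) in part (2), to be the delicate step; the remaining arguments are routine bookkeeping with neighborhoods of \( \mu \) and the standard extension principle for homeomorphisms of end spaces.
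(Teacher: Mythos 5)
There is a genuine gap in both parts, and in both cases it is exactly the step you defer at the end. For part (2), the homeomorphism \( h \) you obtain from surjectivity of \( \mcg(S) \to \Homeo(\sE(S), \sE_{np}(S)) \) only controls the induced map on ends, not the placement of the subsurface \( h(\Sigma_c) \) in \( S \): the curve \( h(c) \) may intersect \( c \) essentially, and no ambient isotopy can then push \( h(\Sigma_c) \) into \( \Omega_c \), since an isotopy cannot change the isotopy class of \( h(c) \). Worse, when \( \Sigma_c \) is compact (e.g.\ for the Loch Ness monster surface, where \( \sE \) is a single point) the end-space data is vacuous and your \( h \) could be the identity, so the entire content of (2) is concentrated in the unproved ``isotopy across \( c \).'' The paper instead uses self-similarity plus the classification of surfaces to find, inside \( \Omega_c \), a separating curve \( c' \) with \( \Sigma_{c'} \) homeomorphic to and disjoint from \( \Sigma_c \) (this is where the zero-or-infinite genus hypothesis enters), takes \( b' \) cobounding a pair of pants \( P \) with \( c \) and \( c' \), and applies the classification of surfaces to the finite union \( \Sigma_{b'} = \Sigma_c \cup P \cup \Sigma_{c'} \) to produce \( h \) supported there with \( h(\Sigma_c) = \Sigma_{c'} \); matching the two subsurfaces \( \Sigma_c \) and \( \Sigma_{c'} \) (a single homeomorphism type) is what makes this tractable.

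For part (1), your construction of \( \psi \) requires the regions \( \Omega_c \ssm \Omega_{c'} \) and \( \Omega_c \ssm \phi^{-1}(\Omega_{c'}) \) to be homeomorphic rel boundary. These are in general infinite-type surfaces, and their homeomorphism types are governed by their end spaces and genera, which for a given choice of \( c' \) need not agree (already for the flute surface the two regions can contain different numbers of isolated ends unless \( c' \) is chosen deep enough relative to \( \phi \)). Self-similarity by itself does not make two such regions between neighborhoods of \( \mu \) interchangeable, and you give no argument for how to choose \( c' \) to force the match. The paper sidesteps this entirely: it uses part (2) twice to produce \( g_2 g_1 \in G \) with \( (g_2 \circ g_1)(\Sigma_c) = f(\Sigma_c) \), so that \( f^{-1} g_2 g_1 \) preserves \( \Sigma_c \) and factors as \( \vp \cdot g_3 \) with \( \vp \) restricting to the identity on \( \Sigma_c \) (hence lying in \( U_A \)) and \( g_3 \in \mcg(\Sigma_c) \subset G \). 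This reduces the problem to matching the subsurfaces \( \Sigma_c \) and \( f(\Sigma_c) \), which are homeomorphic by fiat, rather than their infinite-type complements; some version of this reduction, or a genuine proof of your interchangeability claim, is needed to complete your argument.
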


\begin{proof}
If \( S \) is homeomorphic to the plane, then the statement is trivial.
We will now assume that \( S \) is not homeomorphic to the plane, in which case \( S \) is of infinite type.
Let \( A \) be a finite subset of \( \cc(S) \) and let \( f \in \mcg(S) \).
We must show that there exists \( g \in G \cap fU_A \). 

Let \( c \) be a separating simple closed curve such that each curve in \( A  \) has a representative contained in \( \Sigma_c \). 
Now choose a separating simple closed curve \( b \) such that \( \Sigma_c \cup f(\Sigma_c) \) is contained in \( \Sigma_b \). 
By the self-similarity of \( \sE \),  \( \widehat \Omega_b \) contains an open subset homeomorphic to \( \sE \), and so the classification of surfaces, together with the assumption on the genus of \( S \), guarantees the existence of a separating simple closed curve \( c' \) in \( \Omega_b \) so that \( \Sigma_{c'} \) is homeomorphic to and disjoint from \(  \Sigma_c \).
It is therefore possible to choose a separating simple closed curve \( b' \) co-bounding a pair of pants\footnote{A \emph{pair of pants} is a surface homeomorphic to the 2-sphere with three pairwise-disjoint open disks removed.} with \( c \) and \( c' \). 
We can now apply the classification of surfaces to find \( g_1 \in \Homeo(\Sigma_{b'}) \) such that \( g_1(\Sigma_c) = \Sigma_{c'} \).
Note that this establishes (2) and that  \( g_1 \in G \).

Now, since \( \Sigma_{c'} \) is disjoint from \( f(\Sigma_c) = \Sigma_{f(c)} \), we can use a similar argument to find \( g_2 \in G \) such that \( (g_2\circ g_1)(\Sigma_c) = f(\Sigma_c) \). 
As \( f^{-1}\circ g_2 \circ g_1 \) fixes \( \Sigma_c \) setwise, we can  write \[ f^{-1}\cdot g_2 \cdot g_1  =  \vp \cdot g_3  , \] where  \( g_3 \in \mcg(\Sigma_c) \) and \( \vp \in \mcg(S) \) has a representative that restricts to the identity on \( \Sigma_c \). 
Since \( \vp \in U_A \), we know
\[ f^{-1} \cdot (g_2\cdot g_1 \cdot g_3^{-1}) \in U_A, \] and, since \( g_1, g_2, g_3 \in G \), we conclude that \[ g=g_2\cdot g_1 \cdot g_3^{-1} \in G \cap fU_A \] as desired.  
\end{proof}

\begin{Lem}
\label{lem:singleton-yes}
Let \( S \) be as above. Then, \( \mcg(S) \) has the JEP.
\end{Lem}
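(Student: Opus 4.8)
The plan is to verify the JEP directly from Definition~\ref{def:jep}: given nonempty open sets \( U, V \subset \mcg(S) \), I will produce a single mapping class \( u' \in U \) together with \( g \in \mcg(S) \) such that \( gu'g^{-1} \in V \). Since \( gu'g^{-1} \in V \) gives \( u' = g^{-1}(gu'g^{-1})g \in V^{g^{-1}} \), this shows \( u' \in U \cap V^{g^{-1}} \neq \varnothing \), which is exactly the JEP (with conjugating element \( g^{-1} \)).

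First I would reduce to basic open sets. Since \( G \) is dense by Proposition~\ref{prop:top-generate}(1), choose \( u \in G \cap U \) and \( v \in G \cap V \). As \( U,V \) are open and the translates \( \{wU_A\} \) form a basis, there are finite \( A_1, A_2 \subset \cc(S) \) with \( uU_{A_1} \subseteq U \) and \( vU_{A_2} \subseteq V \); here \( wU_A \) is precisely the set of mapping classes agreeing with \( w \) on \( A \). Because \( u,v \in G \), each has a representative supported on a finite-type subsurface lying off a neighborhood of \( \mu \); after enlarging the defining separating curves toward \( \mu \) (which only shrinks \( \Omega_{c_i} \) and enlarges \( \Sigma_{c_i} \)), I may assume \( u \) is supported on \( \Sigma_{c_1} \) with \( A_1 \subset \Sigma_{c_1} \) and \( v \) is supported on \( \Sigma_{c_2} \) with \( A_2 \subset \Sigma_{c_2} \).

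The construction then runs as follows. Using self-similarity of \( \sE \) together with the genus hypothesis (exactly as in the proof of Proposition~\ref{prop:top-generate}), find a subsurface \( \Sigma' \subset \Omega_{c_1} \) homeomorphic to \( \Sigma_{c_2} \) and disjoint from \( \Sigma_{c_1} \), fix a homeomorphism \( \phi \co \Sigma_{c_2} \to \Sigma' \), and set \( \tilde v = \phi v \phi^{-1} \), extended by the identity. Let \( u' = u\tilde v \). Since \( \tilde v \) is supported on \( \Sigma' \), which is disjoint from \( \Sigma_{c_1} \supseteq A_1 \), it fixes every curve in \( A_1 \); hence \( u' \) agrees with \( u \) on \( A_1 \), so \( u' \in uU_{A_1} \subseteq U \). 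Next, push \( \Sigma_{c_1} \) into \( \Omega_{c_2} \) to obtain a subsurface \( \Sigma'' \) homeomorphic to \( \Sigma_{c_1} \) and disjoint from \( \Sigma_{c_2} \), and build \( g \) so that \( g \) agrees with \( \phi^{-1} \) on \( \Sigma' \) (carrying the copy of \( v \) back to \( v \) on \( \Sigma_{c_2} \)) and \( g(\Sigma_{c_1}) = \Sigma'' \). A direct check then gives that \( gu'g^{-1} = (gug^{-1})(g\tilde vg^{-1}) \) is supported on \( \Sigma'' \sqcup \Sigma_{c_2} \), equals \( v \) on \( \Sigma_{c_2} \supseteq A_2 \), and has its \( \Sigma'' \)-part fixing \( A_2 \); thus \( gu'g^{-1} \in vU_{A_2} \subseteq V \), as needed.

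The main obstacle is showing that the prescribed \( g \) genuinely extends to a homeomorphism of \( S \); that is, that after fixing \( g \) on \( \Sigma_{c_1} \sqcup \Sigma' \) it extends over the complement. This reduces to checking that \( S \ssm (\Sigma_{c_1} \cup \Sigma') \) and \( S \ssm (\Sigma'' \cup \Sigma_{c_2}) \) are homeomorphic by a map respecting the boundary identifications, which I would deduce from the classification of surfaces: in each case the complement is obtained by deleting a copy of \( \Sigma_{c_1} \) and a disjoint copy of \( \Sigma_{c_2} \), and self-similarity together with the unique maximal end \( \mu \) and the hypothesis that the genus is zero or infinite guarantee that the two complements have the same genus, the same number of boundary components, and end space \( \sE \) with \( \mu \) as unique maximal end. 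The delicate point is to choose \( \Sigma' \) and \( \Sigma'' \) (via Proposition~\ref{prop:top-generate}(2) and self-similarity) so that these data match exactly; once they do, the classification supplies the extension of \( g \) and completes the proof.
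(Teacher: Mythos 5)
Your overall strategy---push the supports apart, multiply, and conjugate---is the same displace-and-commute idea that drives the paper's proof, and the formal bookkeeping is correct: \( u' = u\tilde v \) lies in \( uU_{A_1} \) because \( \tilde v \) is supported off \( A_1 \), and \( gu'g^{-1} = (gug^{-1})\,v \) agrees with \( v \) on \( A_2 \) because \( gug^{-1} \) is supported off \( \Sigma_{c_2} \). The gap is exactly where you flag it: the existence of \( g \). You need one homeomorphism of \( S \) that simultaneously restricts to the \emph{prescribed} map \( \phi^{-1} \) on \( \Sigma' \) and carries \( \Sigma_{c_1} \) off of \( \Sigma_{c_2} \), and your proposed justification does not go through as stated. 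The complement \( S \ssm (\Sigma_{c_1} \cup \Sigma') \) does not have end space \( \sE \); its end space is \( \widehat{\Omega}_{c_1} \) with the ends of \( \Sigma' \) deleted, while that of \( S \ssm (\Sigma'' \cup \Sigma_{c_2}) \) is \( \widehat{\Omega}_{c_2} \) with the ends of \( \Sigma'' \) deleted. The deleted pieces are homeomorphic, but deleting homeomorphic clopen subsets from two (a priori different) clopen neighborhoods of \( \mu \) does not by itself make the complements homeomorphic, so the classification of surfaces cannot be invoked directly; establishing that the two complements match is the real content, and it needs either Mann--Rafi's stability lemma or the pair-of-pants localization used in the proof of Proposition~\ref{prop:top-generate}, which confines the swap to a controlled subsurface and extends by the identity. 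A secondary issue: you never arrange any containment between \( \Sigma_{c_1} \) and \( \Sigma_{c_2} \), which may overlap arbitrarily, making the disjointness requirements on \( \Sigma' \) and \( \Sigma'' \) harder to control than necessary.

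The paper's proof reorganizes your construction so that this extension problem never arises. Enclose the supports of \( u \) and \( v \) and all curves of \( A_1\cup A_2 \) in a single \( \Sigma_b \), apply Proposition~\ref{prop:top-generate}(2) once to obtain \( g \) with \( g(\Sigma_b)\subset\Omega_b \)---no prescribed restriction of \( g \) is ever required---and set \( h = u\cdot(gvg^{-1}) \). Since \( gvg^{-1} \) is supported off \( \Sigma_b \), the element \( h \) agrees with \( u \) on \( A_1 \) and with \( gvg^{-1} \) on \( g(A_2) \), so \( h\in U\cap V^g \) directly. Your element is essentially this one seen from the other side of the conjugation; by conjugating \( v \) by the displacement itself, rather than by a separately chosen \( \phi \) that must later be undone by \( g \), the delicate point disappears. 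I recommend recasting your argument in that form.
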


\begin{proof}
Given open subsets \( U_1 \) and \( U_2 \) of \( \mcg(S) \) we need to find \( g \in \mcg(S) \) such that \( U_1 \cap U_2^g \neq \varnothing \). 
Without loss of generality, we may assume that \( U_1 \) and \( U_2 \) are basis elements of \( \mcg(S) \), that is, there exists finite subsets \( A_1 \) and \( A_2 \) of \( \cc(S) \) and \( h_1, h_2 \in \mcg(S) \) such that \( U_i = h_i U_{A_i} \) for \( i \in \{1,2\} \). 
Note that if \( h \in f U_A \), then \( fU_A = hU_A \); hence, by Proposition~\ref{prop:top-generate}, we can assume that both \( h_1 \) and \( h_2 \) are in \( G \).

We can choose a  separating simple closed curve \( b \) so that \( h_1 \) and \( h_2 \) have representatives that restrict to the identity on \( \Omega_b \) and such that each curve in \( A_1 \cup A_2 \) has a representative in \( \Sigma_b \). 
Then, by Proposotion~\ref{prop:top-generate}(2), there exists \( g \in \mcg(S) \) such that \( g(\Sigma_b) \subset \Omega_{b} \).
Note that \( U_2^g = (h_2U_{A_2})^g = (gh_2g^{-1})U_{g(A_2)} \). 
The element \( h_1 \) has a representative that restricts to the identity on \( \Omega_b \); the element \( gh_2g^{-1} \) has a representative that restricts to the identity on \( \Omega_{g(b)} \) and hence on \( \Sigma_b \).
Therefore,  \( h_1 \) and \( gh_2g^{-1} \) commute, \( h_1 \in U_{g(A_2)} \), and \( gh_2g^{-1} \in U_{A_1} \).

Let \( h = h_1(gh_2g^{-1}) \).
Then, for every \( a \in A_1 \), since \( gh_2g^{-1} \in U_{A_1} \),
\[
h(a) = h_1\cdot(gh_2g^{-1})(a) = h_1(a),
\]
which implies \( h \in U_1 \). 
Similarly, for every \( a \in g(A_2) \),
\[
h(a) = h_1\cdot(gh_2g^{-1})(a) = (gh_2g^{-1})\cdot h_1(a) = gh_2g^{-1}(a),
\]
since \( h_1 \in U_{g(A_2)} \), which implies \( h \in U_2^g \).
We have shown that  \( h \in U_1 \cap U_2^g \); in particular, \( U_1 \cap U_2^g \neq \varnothing \)  and thus \( \mcg(S) \) has the JEP.
\end{proof}


\subsection{Explicit examples of dense conjugacy classes}
\label{sec:example}

In this subsection, we construct a explicit mapping classes whose conjugacy class is dense.
Let \( L \) denote the Loch Ness monster surface and let \( G \)  be as in Section~\ref{sec:single}.
Then in this case, as already noted,  \( G \) consists of mapping classes that have a representative homeomorphism that restricts to the identity outside of a compact set.  

Let \( R_g \) denote the connected orientable compact surface of genus \( g \) with a single boundary component. 
Let \[ \mathcal G  = \{ (R_g, \vp) : g \in \mathbb N, \vp \in \mcg(R_g) \} .\]
The set \( \mathcal G \) is countable, so let us choose an enumeration \( \mathcal G = \{ (R_n, \vp_n) \}_{n\in\bn} \). 
Let \( \Sigma \) denote the surface obtained from \( \br^2 \) by removing the open disk centered at \( (i,j) \) of radius \( \frac14 \) for each \( (i,j) \in \bz^2 \). 
Let \( \{ \partial_n \}_{n\in\bn} \) be an enumeration of the boundary components of \( \Sigma \).
We can now construct \( L \) by taking the topological disjoint union \( \Sigma \sqcup \bigsqcup_{n\in\bn} R_n \) and forming the quotient space obtained by identifying the boundary of \( R_n \) with \( \partial_n \) via an orientation-reversing homeomorphism. This is depicted in Figure~\ref{fig:S42}.
The embedding \( R_n \hookrightarrow L \) yields a monomorphism \( \mcg(R_n) \to \mcg(L) \).
Therefore, for each \( n \in \bn \), we can view \( \vp_n \in \mcg(L) \) and define
\[
\Phi = \prod_{n\in\bn} \vp_n.
\]

\begin{figure}[h]
 \labellist
 \small\hair 2pt

 \endlabellist  
\centering
\includegraphics[width=.6\textwidth]{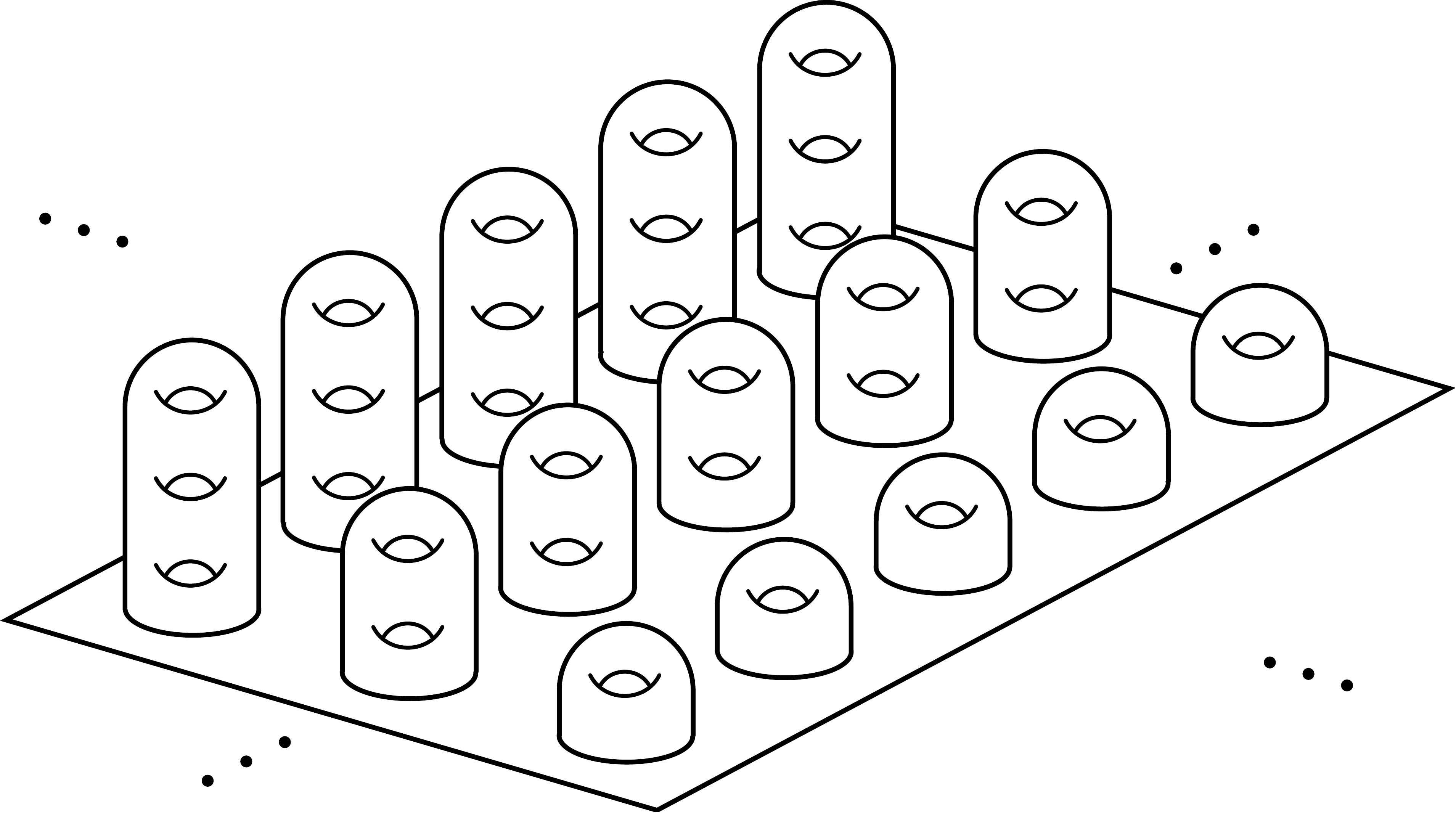}
\caption{A realization of the Loch Ness monster surface \(L\) produced by identifying \(\Sigma\) and the countably many compact surfaces \(R_n\) along pairs of boundary components.}
\label{fig:S42}
\end{figure}

\begin{Prop}
\label{prop:explicit}
The conjugacy class of \( \Phi \) is dense in \( \mcg(L) \).
\end{Prop}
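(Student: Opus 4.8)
The goal is to show that the conjugacy class of $\Phi$ is dense in $\mcg(L)$, where $\Phi = \prod_{n\in\bn}\vp_n$ runs through one copy of every pair $(R_g,\psi)$ with $\psi\in\mcg(R_g)$. Since $\mcg(L)$ is Polish and $\Phi$ is a single element, the plan is to verify directly that its conjugacy class meets every basic open set, i.e.\ that for every finite $A\subset\cc(L)$ and every $f\in\mcg(L)$ there is some $g\in\mcg(L)$ with $g\Phi g^{-1}\in fU_A$. By Proposition~\ref{prop:top-generate}(1) the subgroup $G$ of compactly-supported mapping classes is dense, and since $fU_A=hU_A$ whenever $h\in fU_A$, it suffices to hit $hU_A$ for some $h\in G$. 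So I would fix a target $h\in G$ together with a finite curve set $A$, and produce a conjugate of $\Phi$ that agrees with $h$ on $A$.

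The key idea is that $\Phi$ contains, somewhere far out in the surface, a copy of \emph{every} finite-type mapping class. Concretely, choose a separating curve $c$ so that every curve in $A$ has a representative in $\Sigma_c$ and so that $h$ has a representative supported in $\Sigma_c$. The restriction $h|_{\Sigma_c}$ is a mapping class of the compact surface $\Sigma_c$, which (after identifying $\Sigma_c$ with some $R_g$ up to adding boundary-parallel collars) appears as one of the pairs $(R_m,\vp_m)$ in the enumeration $\mathcal G$; that is, there is an index $m$ for which $\vp_m$ is the ``same'' mapping class living on the handle $R_m\subset L$. Using the self-similarity of $\sE(L)$ and Proposition~\ref{prop:top-generate}(2), together with the classification of surfaces, I would build $g\in\Homeo^+(L)$ carrying $\Sigma_c$ homeomorphically onto the subsurface $R_m$ (matching orientations and the relevant curve data), so that $g\Phi g^{-1}$ restricted to $\Sigma_c$ is conjugate to $\vp_m$, hence equal to $h$ on the curves of $A$. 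The outer factors $\prod_{n\neq m}\vp_n$ are supported away from $g(\Sigma_c)$, so after conjugation they act trivially on $A$, and therefore $g\Phi g^{-1}\in hU_A\subset fU_A$.

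The main obstacle is the bookkeeping that makes ``$h|_{\Sigma_c}$ is one of the $\vp_m$'' precise. Two issues must be handled: first, $\Sigma_c$ may carry extra boundary curves and puncture structure that $R_m$ does not, so I must arrange that only the genus and the action on $A$ matter, pushing the irrelevant topology into the complementary region where the other factors of $\Phi$ live harmlessly; second, a single $h$ need not equal $\vp_m$ on the nose, only up to the ambiguity allowed by $U_A$ (agreement on finitely many curves), so I only need $g\Phi g^{-1}$ and $h$ to induce the same map on $A$, not to be genuinely conjugate as global mapping classes. Managing the homeomorphism $g$ so that it simultaneously realizes the abstract isomorphism $\Sigma_c\cong R_m$ and respects the end $\mu$—so that the tail $\prod_{n\neq m}\vp_n$ is displaced off $A$—is where the self-similarity hypothesis does the real work, and is the step I expect to require the most care.
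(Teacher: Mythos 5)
Your proposal is correct and follows essentially the same route as the paper: reduce to a compactly supported $h\in G$ via Proposition~\ref{prop:top-generate}(1), recognize the restriction of $h$ to a compact subsurface as one of the enumerated pairs $(R_m,\vp_m)$, and use the classification of surfaces to build a global homeomorphism conjugating $\Phi$ so that it agrees with $h$ there (hence on the finitely many curves of $A$), the remaining factors being supported disjointly. The bookkeeping you flag as delicate is genuinely mild here, since a separating curve on the Loch Ness monster surface cuts off a compact one-boundary-component piece that is literally some $R_g$; the paper handles the ``agreement on $A$'' step by passing to a stable Alexander system for the support, which is interchangeable with your choice of $\Sigma_c$ containing representatives of $A$.
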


\begin{proof}
Let \( A \) be a finite subset of \( \cc(S) \) and let \( f \in \mcg(L) \).
We need to show that there is a conjugate of \( \Phi \) in \( fU_A \).
By the density of \( G \), there exists \( h \in G \) such that \( h \in fU_A \), and hence \( fU_A = hU_A \).
By the definition of \( G \), there exists a compact surface \( \Omega \) of \( L \) such that \( h \) has a representative restricting to the identity outside of \( \Omega \).
Letting \( A' \) be a stable Alexander system for \( \Omega \), we have that  \( hU_{A'} \subset fU_A \).

By the classification of surfaces, for every \( g \in \bn \) and any pair of  embeddings \( \iota_1 \) and \( \iota_2 \) of \( R_g \) into \( L \), there exists a homeomorphism \( \sigma \co L \to L \) such that \( \iota_2 = \sigma \circ \iota_1 \).  
In particular, there exists a conjugate of \( \Phi \) that agrees with \( h \) when restricted to \( \Sigma \), and hence a conjugate of \( \Phi \) is contained in \( hU_{A'}\subset fU_A \). 
\end{proof}

This argument can easily be adapted to the case of the flute surface since the corresponding subgroup \( G \) from Proposition~\ref{prop:top-generate} is also countable.  
More generally, since mapping class groups are second countable, the group \( G \) from Proposition~\ref{prop:top-generate} must contain a countable dense subgroup.
Using this countable dense subgroup, the construction given above can be adapted to give an explicit construction of a dense conjugacy class for the mapping class groups of surfaces treated in Section~\ref{sec:single}.

\begin{Rem}
Note that the conjugacy class of \( \Phi \) is not the only conjugacy class that is dense in \( \mcg(L) \).
For instance, we can modify the construction as follows to obtain a distinct dense conjugacy class: when enumerating the boundary components of \( \Sigma \), leave out the components centered at \( (i,0) \) for each \( i \in \bz \).
We then attach a copy of \( R_1 \) to each of these remaining boundary components and introduce a handle shift \( h \) sending the boundary component of \( \Sigma \) with center \( (i,0) \) to the component centered at \( (i+1,0) \) for all \( i \in \bz \).  
(See the survey \cite{AramayonaVlamis} for an introduction to handle shifts.)
This handle shift \( h \)  will commute with \( \Phi \), and we define the product \( \Phi' = h\Phi \).
Then, the conjugacy class of \( \Phi' \) is dense in \( \mcg(L) \) for the same reason that the conjugacy class of \( \Phi \) is.
We can see that these two elements are not conjugate because for \( \Phi' \) there exists a simple closed curve \( c \) such that for every compact set \( K \) of \( L \) there exists \( n \in \bn \) satisfying \( (\Phi')^n(c) \cap K = \varnothing \), which is not the case for \( \Phi \). 
If a comeager conjugacy class exists in Polish group, then it is unique, and so the conjugacy classes of \( \Phi \) and \( \Phi' \) cannot both be comeager.
In fact, we will see in Section~\ref{sec:meager} that there is no comeager conjugacy class in \( \mcg(L) \) (nor in any nontrivial mapping class group).
\end{Rem}


\subsection{Cantor space of maximal points}
\label{sec:cantor}

Throughout this subsection, \( S \) will denote a connected orientable surface satisfying:  (1) every compact subsurface of \( S \) is displaceable (and hence \( S \) is either planar or infinite genus), and (2) the end space \( \sE \) of \( S \) is self-similar and its set of maximal ends \( \mathscr M \) is a Cantor space.
Note that, under these assumptions,  Theorem~\ref{thm:partition} implies any two maximal ends of \( S \) are comparable.

To prepare for the proof of Lemma~\ref{lem:cantor-no}, we first need to prove a lemma regarding the structure of the end space of the surfaces being considered.

\begin{Prop}
\label{prop:Exn}

For any positive integer \( m \),  if \( M \) is a discrete space with \( m \) points, then \( \sE \times M \) is homeomorphic to \( \sE \).
\end{Prop}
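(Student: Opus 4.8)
The plan is to show that $\sE \times M \cong \sE$ by using the self-similarity of $\sE$ together with the hypothesis that the maximal set $\mathscr M$ is a Cantor space in which all maximal ends are comparable. The key tension is that self-similarity only guarantees that \emph{one} piece of a finite clopen decomposition contains an open copy of $\sE$; it does \emph{not} a priori say that every piece looks like $\sE$. So the main work is to upgrade self-similarity into a statement about the specific decomposition $\sE \times M = \sE \times \{1\} \sqcup \cdots \sqcup \sE \times \{m\}$ (equivalently, a partition of $\sE$ into $m$ clopen copies of itself).

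First I would reduce to the case $m=2$, since the general statement follows by induction: if $\sE \times \{1,2\} \cong \sE$, then writing $\sE \times \{1,\dots,m\} = (\sE \times \{1,\dots,m-1\}) \sqcup (\sE \times \{m\})$ and applying the inductive hypothesis inside the first factor gives a copy of $\sE$, to which the $m=2$ case applies. So it suffices to produce a homeomorphism $\sE \sqcup \sE \to \sE$, i.e.\ to partition $\sE$ into two clopen pieces each homeomorphic to $\sE$. The natural way to do this is via maximal ends: since $\mathscr M$ is a Cantor space (in particular perfect, with more than one point), I can split $\mathscr M = \mathscr M_1 \sqcup \mathscr M_2$ into two nonempty clopen pieces, and extend this to a clopen partition $\sE = \mathscr V_1 \sqcup \mathscr V_2$ with $\mathscr M_i \subset \mathscr V_i$. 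The claim is then that each $\mathscr V_i$ is homeomorphic to $\sE$.

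To prove each $\mathscr V_i \cong \sE$, I would use the comparability of maximal ends. Because every maximal end is comparable to every other, and $\mathscr V_i$ contains maximal ends of $\sE$, the relation $\preceq$ lets me ``push'' a copy of all of $\sE$ into $\mathscr V_i$: concretely, for a maximal end $\mu \in \mathscr M_i$ and any other end $e$, comparability gives $e \preceq \mu$, which by the definition of $\preceq$ provides local homeomorphisms carrying neighborhoods of arbitrary ends into neighborhoods of $\mu$ inside $\mathscr V_i$. This should let me build an embedding of $\sE$ as an open subset of $\mathscr V_i$. For the reverse, $\mathscr V_i$ is itself a clopen subset of $\sE$, hence a compact, second-countable, zero-dimensional Hausdorff space, so it embeds in $\sE$ as well; and using self-similarity on the decomposition $\sE = \mathscr V_1 \sqcup \mathscr V_2$ one of the $\mathscr V_i$ already contains an open copy of $\sE$. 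I would then invoke a Cantor--Bendixson / back-and-forth style characterization: a compact zero-dimensional space is determined up to homeomorphism by its Cantor--Bendixson structure, so showing $\mathscr V_i$ and $\sE$ each embed as open subsets of one another (together with the fact that the maximal ends, which are the ``top'' of the $\preceq$-order and carry the essential complexity, are distributed densely in both) forces $\mathscr V_i \cong \sE$.

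The main obstacle I expect is exactly this last identification: turning the mutual-embedding and self-similarity facts into an actual homeomorphism $\mathscr V_i \cong \sE$. Self-similarity as stated is one-sided (it only promises a copy in \emph{some} piece), so the real content is showing that comparability of all maximal ends forces \emph{both} pieces of any maximal-respecting clopen split to be copies of $\sE$; this is where I would need to lean hardest on the Mann--Rafi framework, likely reproving a local version of self-similarity for $\mathscr V_i$ directly from the $\preceq$-relation rather than from the global self-similarity hypothesis. Once that symmetry is established, assembling the homeomorphism $\sE \sqcup \sE \to \sE$ and running the induction is routine.
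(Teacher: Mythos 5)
There is a genuine gap, and you have correctly located it yourself: the step where you conclude \( \mathscr V_i \cong \sE \) from the fact that \( \mathscr V_i \) and \( \sE \) each embed as open subsets of the other. A compact, second-countable, zero-dimensional Hausdorff space is \emph{not} determined up to homeomorphism by its Cantor--Bendixson structure unless it is countable, and the end spaces in question here are typically uncountable (the set of maximal ends is already a Cantor set); topological Schr\"oder--Bernstein fails for this class of spaces, so mutual open embeddings do not yield a homeomorphism. Your proposed fallback --- ``reproving a local version of self-similarity for \( \mathscr V_i \) directly from the \( \preceq \)-relation'' --- is precisely the missing content, and without it the argument does not close. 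A secondary soft spot is the claim that comparability lets you assemble an open embedding of all of \( \sE \) into \( \mathscr V_i \): the relation \( \preceq \) only hands you one local homeomorphism at a time, for one end and one target neighborhood, and gluing these into a single global embedding of the whole end space is not automatic.

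The paper sidesteps the problem by never attempting to show that both halves of a maximal-respecting clopen split are copies of \( \sE \). Instead it starts from two \emph{disjoint} open copies \( \mathscr V_1, \mathscr V_2 \) of \( \sE \) (which the standing assumptions provide) and then absorbs the leftover set \( \mathscr V_3 = \sE \ssm (\mathscr V_1 \cup \mathscr V_2) \) into \( \mathscr V_1 \). The tool that makes absorption work is the Mann--Rafi stability lemma (Lemma~\ref{lem:stable}): every \( y \in \sE \) has a neighborhood \( \mathscr U \) such that \( \sE \sqcup \mathscr U' \cong \sE \) for every smaller neighborhood \( \mathscr U' \) of \( y \). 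Covering the compact set \( \mathscr V_3 \) by finitely many such clopen neighborhoods, disjointifying them, and absorbing them one at a time into \( \mathscr V_1 \) gives \( \mathscr V_1 \cup \mathscr V_3 \cong \sE \), and then \( \sE = \mathscr V_2 \sqcup (\mathscr V_1 \cup \mathscr V_3) \) exhibits \( \sE \cong \sE \times \{1,2\} \). Your reduction to \( m = 2 \) matches the paper; it is the identification of the two pieces that requires this additional lemma rather than a back-and-forth or mutual-embedding argument.
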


The main ingredient in the proof will be \cite[Lemma 4.18]{MannRafi}.
We state a version adapted to our particular scenario, which uses \cite[Remark~4.15]{MannRafi}:

\begin{Lem}[Mann--Rafi]
\label{lem:stable}
Let \( \mu \in \cm \). 
For every \( y \in \sE \), there exists an open neighborhood \( \mathscr U \) of \( y \) such that, for every open neighborhood \( \mathscr U' \) of \( y \) contained in \( \mathscr U \), the space \( \sE \sqcup \mathscr U' \) is homeomorphic to \( \sE \).
\end{Lem}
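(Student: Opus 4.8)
The plan is to prove the statement by an explicit ``Hilbert hotel'' construction that absorbs a copy of \( \mathscr U' \) into \( \sE \) near the maximal end \( \mu \). First I would reduce to clopen neighborhoods: since \( \sE \) is compact, Hausdorff, and zero-dimensional, clopen sets form a neighborhood basis, and in any case the relation \( \sE \sqcup \mathscr U' \cong \sE \) forces \( \mathscr U' \) to be compact and hence clopen; so I take both \( \mathscr U \) and \( \mathscr U' \) clopen. The neighborhood \( \mathscr U \) of \( y \) will be chosen to be a \emph{stable} neighborhood in the sense of Mann--Rafi (this is the role played by \cite[Remark~4.15]{MannRafi}): a clopen neighborhood of \( y \) such that any two clopen neighborhoods of \( y \) contained in \( \mathscr U \) are homeomorphic. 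Then for any clopen neighborhood \( \mathscr U' \subseteq \mathscr U \) of \( y \), every clopen neighborhood of \( y \) inside \( \mathscr U \) is homeomorphic to \( \mathscr U' \); this size-matching is exactly what lets the absorption go through.

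Next I would record the order-theoretic input. Because \( \preceq \) is a preorder and, by Theorem~\ref{thm:partition}(3), all maximal ends are comparable---hence mutually equivalent---transitivity together with the fact that every end lies below some maximal end gives \( e \preceq \mu' \) for \emph{every} end \( e \) and every maximal end \( \mu' \). In particular \( y \preceq \mu' \) for all \( \mu' \in \mathscr M \). Since \( \mathscr M \) is a Cantor space, \( \mu \) is not isolated in \( \mathscr M \), so I can choose distinct maximal ends \( \mu_1'', \mu_2'', \ldots \in \mathscr M \) converging to \( \mu \), and then pairwise disjoint clopen sets \( \mathscr W_i \ni \mu_i'' \) with \( \mu \notin \mathscr W_i \) and \( \mathscr W_i \to \mu \) (diameters tending to \(0\)). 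Applying \( y \preceq \mu_i'' \) to the neighborhood \( \mathscr W_i \) yields a clopen neighborhood \( \mathscr V_i \subseteq \mathscr U \) of \( y \) and \( f_i \in \Homeo^+(S) \) with \( \widehat{f_i}(\mathscr V_i) \subseteq \mathscr W_i \); setting \( \mathscr C_i = \widehat{f_i}(\mathscr V_i) \) and invoking stability of \( \mathscr U \) (so that \( \mathscr V_i \cong \mathscr U' \)) produces pairwise disjoint clopen copies \( \mathscr C_i \cong \mathscr U' \) with \( \mu \notin \mathscr C_i \) and \( \mathscr C_i \to \mu \).

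With these copies in hand I would build the homeomorphism directly. Writing \( \mathscr C_0 := \mathscr U' \) for the extra summand, define \( \psi \co \sE \sqcup \mathscr U' \to \sE \) to be the identity on \( \sE \ssm \bigcup_{i \geq 1} \mathscr C_i \) and a fixed homeomorphism \( \mathscr C_i \to \mathscr C_{i+1} \) on each \( \mathscr C_i \) for \( i \geq 0 \). A short check shows \( \psi \) is a bijection; it is a homeomorphism away from \( \mu \) because each \( \mathscr C_i \) is clopen and every other point of \( \sE \ssm \bigcup_{i\geq 1}\mathscr C_i \) has a clopen neighborhood meeting no \( \mathscr C_i \); and it is continuous at \( \mu \) precisely because any sequence converging to \( \mu \) in the domain that passes through the copies must have its indices tend to infinity (the \( \mathscr C_i \) being fixed clopen sets avoiding \( \mu \)), so its image, lying in copies of ever-larger index, again converges to \( \mu \). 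The same argument applies to \( \psi^{-1} \), giving \( \sE \sqcup \mathscr U' \cong \sE \).

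The main obstacle, and the place where I would lean on \cite{MannRafi}, is the existence of the stable neighborhood \( \mathscr U \) in the first step: it is what upgrades the copies of ``some neighborhood of \( y \)'' furnished by the relation \( \preceq \) into copies of \( \mathscr U' \) \emph{itself}, and without it the index-matching in the shift breaks down. The remaining delicate point is purely topological---verifying continuity of \( \psi \) at \( \mu \) and confirming that \( \{\mu\} \cup \bigcup_i \mathscr C_i \) accumulates \emph{only} at \( \mu \)---which is arranged by the choice of the \( \mathscr W_i \) with shrinking diameters around the \( \mu_i'' \).
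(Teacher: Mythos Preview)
The paper does not supply its own proof of this lemma: it is quoted from Mann--Rafi (their Lemma~4.18, with the existence of stable neighborhoods coming from their Remark~4.15), so there is no in-paper argument to compare against. Your proposal is therefore an independent proof of the cited statement, and it is essentially correct.

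A couple of remarks on the write-up. First, your observation that the conclusion forces \( \mathscr U' \) to be clopen is exactly right and is needed to make the statement literal; the paper's application of the lemma (in the proof of Proposition~\ref{prop:Exn}) only ever feeds in clopen sets, so your reduction matches the intended usage. Second, when you invoke \( y \preceq \mu_i'' \), the definition only hands you \emph{some} neighborhood of \( y \); you should say explicitly that you then pass to a clopen subneighborhood contained in \( \mathscr U \) before applying stability. Third, your use of Theorem~\ref{thm:partition} to get \( y \preceq \mu' \) for every maximal \( \mu' \) is exactly the right order-theoretic step, and the non-isolation of \( \mu \) in the Cantor space \( \mathscr M \) is what produces the infinite supply of targets for the Hilbert-hotel shift. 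The continuity check at \( \mu \) is the only genuinely delicate point and you have it: since each \( \mathscr C_i \) is clopen and misses \( \mu \), any sequence in \( \bigcup_i \mathscr C_i \) converging to \( \mu \) must pass through copies of unbounded index, and the shrinking \( \mathscr W_i \) then force the shifted images to converge to \( \mu \) as well.
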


\begin{proof}[Proof of Proposition~\ref{prop:Exn}]
It is enough to show the statement holds for \( m = 2 \). 
By the assumptions on \( \sE \), there exists disjoint open subsets \( \mathscr V_1 \) and \( \mathscr V_2 \)  of \( \sE \) each of which is homeomorphic to \( \sE \).
If \( \sE = \mathscr V_1 \cup \mathscr V_2 \), then we are done; otherwise, let \( \mathscr V_3 = \sE \ssm (\mathscr V_1 \cup \mathscr V_2) \) and, for each \( y \in \mathscr V_3 \), let \( \mathscr U_y \) be the open neighborhood given by Lemma~\ref{lem:stable}; by possibly shrinking \( \mathscr U_y \) and taking the intersection with \( \mathscr V_3 \), we may assume that \( \mathscr U_y \) is clopen and \( \mathscr U_y \subset \mathscr V_3 \).
Since \( \mathscr V_3 \) is compact, there exists \( y_1, \ldots, y_m \) such that \( \mathscr V_3 = \mathscr U_{y_1} \cup\cdots\cup \mathscr U_{y_m} \).
Now, let \( \mathscr U_1' = \mathscr U_{y_1} \), \( \mathscr U_2' = \mathscr U_{y_2} \ssm \mathscr U_1' \), \( \mathscr U_3' = \mathscr U_{y_3} \ssm (\mathscr U_1' \cup \mathscr U_2') \), etc.
Then, each \( \mathscr U_i' \) is open, \( \mathscr V_3 = \bigsqcup_{i=1}^m \mathscr U_i' \), and, for each \( i \), \( \mathscr V_1 \cup \mathscr U_i' \) is homeomorphic to \( \sE \).
Therefore, \( \mathscr V_4 = \mathscr V_1 \cup \mathscr V_3 \) is homeomorphic to \( \sE \). 
Now, \( \mathscr V_2 \) and \( \mathscr V_4 \) are both open and homeomorphic to \( \sE \), they are disjoint, and their union is \( \sE \); hence, \( \sE \cong \sE \times \{1,2\} \).   
\end{proof}

\begin{Lem}
\label{lem:cantor-no}
Let \( S \) be as above. Then, \( \mcg(S) \) does not have the JEP.
\end{Lem}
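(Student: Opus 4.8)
The plan is to negate the JEP directly: by Theorem~\ref{thm:ergodic} it suffices to produce two nonempty open sets \( U,V\subset\mcg(S) \) with \( U\cap V^g=\varnothing \) for every \( g\in\mcg(S) \). The governing difference from the unique--maximal--end case of Lemma~\ref{lem:singleton-yes} is that there is no single end that ``absorbs'' all finitely supported data: the key mechanism of Proposition~\ref{prop:top-generate}(2), namely pushing a subsurface \( \Sigma_c \) entirely into the \( \mu \)--side \( \Omega_c \) so that two prescribed mapping classes commute, has no analogue when \( \cm \) is a Cantor set. My first step is therefore to quantify this failure and convert it into a positive obstruction.

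Concretely, I would first use self--similarity together with Proposition~\ref{prop:Exn} (in the form \( \sE\cong\sE\sqcup\sE \)) to produce a separating simple closed curve \( c \) whose two complementary regions \( \Omega \) and \( \Sigma \) each have end space homeomorphic to \( \sE \); in particular each side carries a Cantor set of maximal ends, and by Theorem~\ref{thm:partition} all of these maximal ends are comparable. Such a \( c \) is ``bisecting'': for every \( g \), the curve \( g(c) \) again separates the maximal ends into two full copies of \( \sE \). This bisecting property is the substitute I would seek for the non--displaceability hypothesis used in Theorem~\ref{thm:nondisplaceable}. From \( c \) I would record the clopen splitting \( \cm=(\cm\cap\widehat\Omega)\sqcup(\cm\cap\widehat\Sigma) \) and attempt to build \( U \) and \( V \) so that membership in \( U \) forces a mapping class to respect this splitting in one prescribed way, while membership in every conjugate \( V^g \) forces the opposite behaviour relative to the splitting determined by \( g(c) \).

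The model to emulate is the doubly--pointed case, where a fix/swap dichotomy for the two maximal ends yields a continuous surjection onto a finite group with no dense conjugacy class. Here, because the maximal ends form a Cantor set with no distinguished pair, such a crude finite invariant is unavailable, and I would instead look for a conjugation--invariant built from the \( \preceq \)--order on the non--maximal ends lying below the two families \( \cm\cap\widehat\Omega \) and \( \cm\cap\widehat\Sigma \)---an invariant that distinguishes \( U \) from \( V \) and that no choice of \( g \) can reconcile, since \( \mcg(S) \) acts on \( (\sE,\preceq) \) preserving the order.

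The main obstacle is exactly this last point, and it is where the Cantor case is genuinely harder than Theorem~\ref{thm:nondisplaceable}. Because every compact subsurface is displaceable, the forced--intersection argument of that theorem---where non--displaceability guarantees some \( a\in A \) meets \( g(\Sigma) \) and a subsurface--projection estimate then yields a contradiction---is unavailable: for a single curve or compact subsurface one can always choose \( g \) making the images disjoint, so \emph{finite curve data alone can always be matched}. Any valid obstruction must therefore be invariant under the full freedom in \( g \), that is, genuinely conjugation--invariant, rather than a coarse--geometric clash. I expect the crux to be extracting such an invariant from the comparability of all maximal ends---exploiting that \( \sE \) looks the same near every maximal end, yet the splitting recorded by a bisecting curve cannot be globally undone---and I would spend most of the effort verifying that the resulting \( U \) and \( V \) admit no conjugate match.
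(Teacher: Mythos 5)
There is a genuine gap: your proposal never actually constructs the open sets \( U \) and \( V \), and the conjugation-invariant you hope to extract from the \( \preceq \)-order is never exhibited. More importantly, you explicitly rule out the mechanism that works. You argue that because every compact subsurface is displaceable, ``finite curve data alone can always be matched,'' so the obstruction must be ``genuinely conjugation-invariant, rather than a coarse-geometric clash.'' The paper's proof is precisely a coarse-geometric clash built from finite curve data. The resolution of your worry is that although \( g \) can move a single curve anywhere, it cannot move it \emph{nowhere}: wherever \( g(b_1) \) lands, it either meets a fixed compact core or is trapped in one of finitely many complementary regions, and the first open set is rigged so that both outcomes are fatal.

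Concretely, the paper uses Proposition~\ref{prop:Exn} (with \( m=5 \), not \( m=2 \)) to choose a compact planar subsurface \( \Sigma \) with five separating boundary curves \( b_1,\dots,b_5 \) such that each complementary region \( \Omega_i \) has \( \widehat\Omega_i \cong \sE \). It then takes \( f\in\mcg(\Sigma) \) cyclically permuting the \( b_i \) with \( d_{\cc(\Sigma)}(f(c),c)>2 \) for all \( c \), and sets \( U_1=fU_A \) (for \( A \) a stable Alexander system of \( \Sigma \)) and \( U_2=U_{\{b_1\}} \). For any \( g \): if \( g(b_1) \) meets \( \Sigma \), the subsurface-projection estimate from Theorem~\ref{thm:nondisplaceable} applies verbatim and gives a contradiction; if \( g(b_1) \) is disjoint from \( \Sigma \), it lies in some \( \Omega_i \), so every \( \vp\in U_2^g \) satisfies \( \vp(\Omega_i)\cap\Omega_i\neq\varnothing \), whereas every \( \psi\in U_1 \) cyclically permutes the \( \Omega_i \) and hence satisfies \( \psi(\Omega_i)\cap\Omega_i=\varnothing \). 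Note that the self-similarity with Cantor \( \cm \) enters only through Proposition~\ref{prop:Exn}, which guarantees that \emph{all five} complementary regions can be taken to have full end space \( \sE \) --- this is exactly what fails when there is a unique maximal end, where one region must absorb \( \mu \) and the others can be made small and then absorbed, as in Lemma~\ref{lem:singleton-yes}. Your ``bisecting curve'' into two copies of \( \sE \) is a step in the right direction, but two pieces do not suffice to set up the cyclic-permutation trap, and no order-theoretic invariant is needed.
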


\begin{proof}
Using Proposition~\ref{prop:Exn}, we can choose a compact surface \( \Sigma \subset S \) satisfying:
\begin{enumerate}
\item \( \Sigma \) is a planar surface,
\item \( \Sigma \) has five boundary components, labelled \( b_1, b_2, b_3, b_4 \) and \( b_5 \),
\item each component of \( \partial \Sigma \) is separating, and
\item each of the five corresponding components \( \Omega_1, \Omega_2, \Omega_3, \Omega_4, \Omega_5  \) of \( S \ssm \Sigma \) satisfies \( \widehat \Omega_i \) is homeomorphic to \( \sE \).
\end{enumerate}

The conditions on \( \Sigma \) guarantee that the inclusion \( \Sigma \hookrightarrow S \) induces a monomorphism \( \mcg(\Sigma) \to \mcg(S) \). 
Let \( f \in \mcg(\Sigma) \) be a homeomorphism such that \( f(b_i) = b_{i+1} \), where the indices are read modulo 5, and such that \( d_{\cc(\Sigma)}(f(c), c) > 2 \) for all \( c \in \mathcal{C}(\Sigma) \).
The existence of \( f \) is guaranteed by the results and discussion in \cite[Section~14.1.5]{Primer} and \cite[Proposition~4.6]{MasurGeometryI}.

Choose a finite stable Alexander system  \( A \) for \( \Sigma \), let \(  U_1 = fU_A \) and let \( U_2 = U_{\{b_1\}} \).
Note that every element of \( U_1 \) acts on \( \mathcal{C}(\Sigma) \) and this action agrees with that of \( f \).
We claim that \( U_1 \cap U_2^g = \varnothing \) for all \( g \in \mcg(S) \).
Let \( g \in \mcg(S) \), then we consider two cases: either \( g(b_1) \) has a representative disjoint from \( \Sigma \) or not. 
First assume that \( g(b_1) \) has a representative disjoint from \( \Sigma \), inclusive of the possibility that the curve \( g(b_1) \) is isotopic to a component of \( \partial \Sigma \).
In this case, up to isotopy, there exists a component \( \Omega_i \) of \( S \ssm \Sigma \) containing \( g(b_1) \).
But, if \( \vp \in U_2^g \), then \( \vp(g(b_1)) = g(b_1) \), and hence \( \vp(\Omega_i) \cap \Omega_i \neq \varnothing \).
Therefore, \( \psi(\Omega_i) \cap \Omega_i = \varnothing \) for every \( \psi \in U_1 \), and hence \( U_1 \cap U_2^g = \varnothing \).

Now assume instead that \( g(b_1) \) has nontrivial geometric intersection with \( \Sigma \).
Let \( c \in \cc(\Sigma) \) be a projection of \( g(b_1) \) to \( \Sigma \). 
Any \( \psi \in U_1 \) acts on \( \mathcal{C}(\Sigma) \), and by the definitions of \( f \) and \( U_1 \) we have that \( d_{\cc(\Sigma)}(\psi(c), c) > 2 \).
However, for every \( \vp \in U_2^g \) we have  \( \vp(g(b_1)) = g(b_1) \); this implies \( \vp(c) \) is also a projection of \( g(b_1) \) to \( \Sigma \) and \( d_{\cc(\Sigma)}(\vp(c),c) \leq 2 \).
Therefore we again have \( U_1 \cap U_2^g = \varnothing \), and so conclude that \( \mcg(S) \) fails to have the JEP.
\end{proof}

\subsection{Proof of Theorem~\ref{thm:self-similar}}
\label{sec:proof4}

\begin{proof}[Proof of Theorem~\ref{thm:self-similar}]
By Theorem~\ref{thm:partition}, under the assumptions on \( S \), the set of maximal ends of \( S \) is either a singleton or a Cantor space.
Moreover, as already noted, every surface of positive finite genus has a compact non-displaceable subsurface, so \( S \) has either zero or infinite genus. 
By the equivalence of the JEP and the Rokhlin property for Polish groups (Theorem~\ref{thm:ergodic}) and the fact that mapping class groups are Polish, the theorem readily follows from  Lemma~\ref{lem:singleton-yes} and Lemma~\ref{lem:cantor-no}.
\end{proof}


\section{Doubly pointed end space}
\label{sec:doubly-pointed}

In this section, we present the final case to Theorem~\ref{thm:main}:

\begin{Thm}
\label{thm:doubly-pointed}
The mapping class group of a connected orientable 2-manifold with a doubly pointed end space does not have the Rokhlin property.
\end{Thm}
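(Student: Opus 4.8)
The plan is to split the argument according to whether $S$ contains a proper compact non-displaceable subsurface. If it does, then Theorem~\ref{thm:nondisplaceable} applies immediately and $\mcg(S)$ fails to have the Rokhlin property. (If $S$ happens to be of finite type, then its mapping class group is discrete and nontrivial, so it again has no dense conjugacy class, exactly as in the opening of the proof of Theorem~\ref{thm:nondisplaceable}.) It therefore remains to treat the case in which every compact subsurface of $S$ is displaceable, and it is here that the genuinely new work lies.

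In that case I would invoke Theorem~\ref{thm:partition}: since $\sE(S)$ is doubly pointed we are in conclusion~(2), so $\mathscr M = \{\mu_1,\mu_2\}$ consists of exactly two maximal ends. The key structural input I would establish is that these two ends can be interchanged by a mapping class, i.e.\ that there exists $\sigma \in \mcg(S)$ whose induced homeomorphism $\widehat\sigma$ of $\sE(S)$ satisfies $\widehat\sigma(\mu_1)=\mu_2$. To see this, I would first argue that $\mu_1$ and $\mu_2$ are comparable: were they incomparable, the Mann--Rafi theory of \cite{MannRafi} would force disjoint neighborhoods of $\mu_1$ and $\mu_2$ of genuinely distinct type, pinning a compact subsurface that separates them and so contradicting the standing assumption that every compact subsurface is displaceable. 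Being comparable and both maximal gives $\mu_1 \preceq \mu_2$ and $\mu_2 \preceq \mu_1$, so $\mu_1$ and $\mu_2$ admit homeomorphic neighborhoods; the neighborhood-stability results of \cite{MannRafi} (in the spirit of Lemma~\ref{lem:stable}) then let one promote a local homeomorphism exchanging such neighborhoods to a global homeomorphism of $S$ realizing the swap.

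With swappability in hand, the obstruction becomes purely algebraic and dynamical. The continuous homomorphism $\mcg(S) \to \Homeo(\sE(S))$ permutes the invariant two-point set $\mathscr M$, and so induces a continuous homomorphism $\phi \co \mcg(S) \to \mathrm{Sym}(\mathscr M) \cong \bz/2\bz$, where $\bz/2\bz$ carries the discrete topology; the element $\sigma$ witnesses that $\phi$ is surjective. Now for any homomorphism into an abelian group the image of a conjugacy class is a single element, since $\phi(gfg^{-1}) = \phi(f)$ for all $f,g \in \mcg(S)$. Hence if some conjugacy class $C \subset \mcg(S)$ were dense, its image $\phi(C)$ would be dense in $\bz/2\bz$; but $\phi(C)$ is a single point, and a single point is never dense in the two-point discrete group. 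This contradiction shows that $\mcg(S)$ has no dense conjugacy class, and so by Theorem~\ref{thm:ergodic} it fails to have the JEP and the Rokhlin property. This completes both cases.

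The step I expect to be the main obstacle is establishing swappability of $\mu_1$ and $\mu_2$ under the hypothesis that every compact subsurface is displaceable. The algebraic punchline is immediate once $\phi$ is surjective, and the non-displaceable case is handled verbatim by Theorem~\ref{thm:nondisplaceable}; the delicate point is the structural claim that ruling out incomparable maximal ends (via the absence of a pinned separating subsurface) together with comparability yields an honest global involution of $S$ exchanging the two ends, rather than merely a local equivalence of their neighborhoods. I would isolate this as a preliminary lemma drawn from the Mann--Rafi framework so that the proof of the theorem itself reduces to the clean abelian-quotient argument above.
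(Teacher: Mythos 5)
Your overall architecture coincides with the paper's: reduce to the case where every compact subsurface is displaceable via Theorem~\ref{thm:nondisplaceable}, show the two maximal ends can be interchanged by a homeomorphism, and conclude from the resulting proper open normal subgroup that no conjugacy class can be dense. Your closing move (the continuous surjection onto \( \mathrm{Sym}(\mathscr M)\cong \bz/2\bz \), under which a conjugacy class has one-point image) is just a rephrasing of the paper's observation that the stabilizer of a maximal end is a proper open normal subgroup, and it is correct.

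The genuine gap is in the swappability step, which you rightly flag as the crux but do not actually prove. Two specific problems. First, your route to comparability of \( \mu_1 \) and \( \mu_2 \) --- ``incomparability would pin a compact subsurface separating them'' --- does not work as stated: a separating curve \( c \) with \( \mu_1 \) and \( \mu_2 \) on opposite sides can typically be displaced to a disjoint nested curve that still separates the two ends (push toward \( \mu_1 \)), so no non-displaceable compact subsurface is produced this way. Second, and more importantly, even granting \( \mu_1\preceq\mu_2 \) and \( \mu_2\preceq\mu_1 \), this only yields \emph{embeddings} of neighborhoods of each end into neighborhoods of the other; promoting this to \emph{homeomorphic} disjoint clopen neighborhoods (which is what you need to extend to a global homeomorphism of \( S \) swapping the ends) requires a Schr\"oder--Bernstein-type back-and-forth argument, and that is precisely the content the paper supplies in Lemmas~\ref{lem:finite-index}, \ref{lem:self-similar-nbhd}, and \ref{lem:permuting}. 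The workhorse there is Lemma~\ref{lem:self-similar-nbhd}: using displaceability of a pair of pants whose boundary curves cut off a neighborhood \( \mathscr W \) of a maximal end and a clopen set \( \mathscr U \) disjoint from \( \mathscr M \), one shows \( \mathscr W \) contains a homeomorphic copy of \( \mathscr U \) (and this is established directly, without first settling comparability); the back-and-forth in Lemma~\ref{lem:permuting} then assembles the swap. Your proposal defers all of this to ``the Mann--Rafi framework,'' but the needed statement is not an off-the-shelf citation --- it is the theorem's actual content, and as written your sketch does not close it.
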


By Theorem~\ref{thm:nondisplaceable}, we already know that if a 2-manifold contains a compact non-displaceable subsurface, then it cannot have the Rokhlin property; therefore, we focus on the case in which all compact subsurfaces are displaceable.  
Under this assumption, we will show that the pair of maximal ends of a surface with a doubly pointed end space can be interchanged by a homeomorphism of the surface.
This implies that the stabilizer of a maximal end is a proper open normal subgroup, and hence the group cannot have the Rokhlin property.  
This argument will unfold in the sequence of lemmas that follow.

\begin{Lem}
\label{lem:finite-index}
Let \( S \) be a connected orientable surface.
If there exists a compact subsurface \( \Sigma \) of \( S \) and a finite-index subgroup \( H \) of \( \Homeo^+(S) \) such that \( h(\Sigma) \cap \Sigma \neq \varnothing \) for every \( h \in H \), then \( S \) contains a non-displaceable subsurface.
\end{Lem}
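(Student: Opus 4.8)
The plan is to exploit the finite-index hypothesis through a coset decomposition, promoting the weak non-displaceability over $H$ into genuine non-displaceability over all of $\Homeo^+(S)$, at the cost of enlarging $\Sigma$. Write $\Homeo^+(S) = \bigsqcup_{i=1}^n g_i H$ as a finite disjoint union of left cosets, with $g_1$ the identity. The key observation is that for any $f \in \Homeo^+(S)$, writing $f = g_i h$ with $h \in H$, the hypothesis $h(\Sigma) \cap \Sigma \neq \varnothing$ yields $f(\Sigma) \cap g_i(\Sigma) = g_i\bigl(h(\Sigma) \cap \Sigma\bigr) \neq \varnothing$. Thus $f(\Sigma)$ always meets at least one of the finitely many translates $g_1(\Sigma), \ldots, g_n(\Sigma)$, regardless of $f$.

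Next I would set $\Sigma_0 = \bigcup_{i=1}^n g_i(\Sigma)$, a compact subset of $S$ given as a finite union of compact subsurfaces. Since $S$ is connected and $\Sigma_0$ is compact, $\Sigma_0$ is contained in some compact connected subsurface $\Sigma'$ of $S$; for instance, one may take a term of a compact exhaustion of $S$ by connected subsurfaces large enough to engulf $\Sigma_0$. This $\Sigma'$ will be the candidate non-displaceable subsurface, and passing to it repairs the fact that $\Sigma_0$ need be neither connected nor bounded by embedded curves.

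To verify non-displaceability, I would take an arbitrary $f \in \Homeo^+(S)$ and write $f = g_i h$ as above. Since $f(\Sigma) \subseteq f(\Sigma')$ and $g_i(\Sigma) \subseteq \Sigma_0 \subseteq \Sigma'$, the inclusions give $\varnothing \neq f(\Sigma) \cap g_i(\Sigma) \subseteq f(\Sigma') \cap \Sigma'$, whence $f(\Sigma') \cap \Sigma' \neq \varnothing$. As $f$ was arbitrary, $\Sigma'$ is non-displaceable.

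The argument is essentially formal once the coset trick is in place. The only points requiring care are the passage from the possibly disconnected, irregularly-bounded union $\Sigma_0$ to an honest compact subsurface $\Sigma'$, and the elementary remark that enclosing a non-displaceable set inside a larger subsurface preserves non-displaceability, which is immediate from the monotonicity of $f(\cdot)$ and $(\cdot)$ under inclusion. I expect no serious obstacle here; the main conceptual step is recognizing that finite index is exactly what traps $f(\Sigma)$ among finitely many translates of $\Sigma$.
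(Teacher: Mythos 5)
Your argument is correct and is essentially the same as the paper's: both use a left-coset decomposition of $\Homeo^+(S)$ over $H$, take the union of the coset-representative translates of $\Sigma$, and observe that any compact subsurface containing that union is non-displaceable. Your extra care in passing from the possibly disconnected union to an honest compact connected subsurface is the same step the paper handles with its closing remark that any compact subsurface containing $\Sigma'$ works.
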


\begin{proof}
Let \( n \) denote the index of \( H \) in \( \Homeo^+(S) \) and fix \( f_1, \ldots, f_n \) in \( \Homeo^+(S) \) such that
\[ \Homeo^+(S) = \bigcup_{j=1}^n ( f_j \cdot H) \]
and define 
\[ \Sigma' = \bigcup_{i=1}^n f_j(\Sigma) . \]
If \( g \in \Homeo^+(S) \), then we can write \( g = f_k h \) for some \( k \in \{1, \ldots, n\} \) and \( h \in H \).
Since \( h(\Sigma) \cap \Sigma \neq \varnothing \), it follows that \( (f_k \circ h)(\Sigma') \cap \Sigma' \neq \varnothing \).
Therefore, any compact subsurface containing \( \Sigma' \) is non-displaceable. 
\end{proof}

\begin{Lem}
\label{lem:self-similar-nbhd}
Let \( S \) be a connected orientable 2-manifold with doubly pointed end space \( \sE \) such that every compact subset of \( S \) is displaceable.
Let \( \mu \) be a maximal end of \( S \).
If \( \mathscr W \) is a clopen neighborhood of \( \mu \) in \( \sE \) and \( \mathscr U \) is a clopen subset of \( \sE \) disjoint from the set of maximal ends, then there exists a clopen subset \( \mathscr V \) of \( \sE \) such that \( \mathscr V \subset \mathscr W \) and \( \mathscr V \) is homeomorphic to \( \mathscr U \).
\end{Lem}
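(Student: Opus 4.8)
The plan is to exploit the doubly pointed hypothesis together with self-similarity of neighborhoods of maximal ends. Let $\mu$ and $\mu'$ be the two maximal ends of $S$, and recall from Theorem~\ref{thm:partition} that $\sE$ is doubly pointed exactly when $\mathscr M = \{\mu, \mu'\}$. The key structural input should be Lemma~\ref{lem:stable} of Mann--Rafi: since $\mu$ is a maximal end, every point $y \in \sE$ has a neighborhood $\mathscr U_y$ with the property that $\sE \sqcup \mathscr U'$ is homeomorphic to $\sE$ for every smaller neighborhood $\mathscr U' \subset \mathscr U_y$. I would first try to establish a local version of self-similarity \emph{inside} the clopen neighborhood $\mathscr W$ of $\mu$, namely that $\mathscr W$ itself contains open copies of arbitrary clopen subsets that avoid the maximal ends.

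\textbf{The main steps.} First I would argue that $\mathscr W$ contains an open homeomorphic copy of $\sE$. Because $\mu$ is maximal and self-similar phenomena hold near it, any clopen neighborhood of $\mu$ should contain such a copy; this is where I expect to lean on the comparability structure and on the $\preceq$ relation, which guarantees that arbitrarily small neighborhoods of $\mu$ can absorb copies of the whole end space via homeomorphisms of $S$. Granting a copy $\mathscr E_0 \subset \mathscr W$ with $\mathscr E_0 \cong \sE$, the problem reduces to finding inside $\mathscr E_0$ a clopen copy of the given $\mathscr U$. Since $\mathscr U$ is clopen and \emph{disjoint from the maximal ends}, $\mathscr U$ embeds as a clopen subset of $\sE$ that misses $\mathscr M$; I would then transport this, through the homeomorphism $\mathscr E_0 \cong \sE$, to a clopen $\mathscr V \subset \mathscr E_0 \subset \mathscr W$ homeomorphic to $\mathscr U$. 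The disjointness of $\mathscr U$ from $\mathscr M$ is exactly what lets the copy $\mathscr V$ avoid $\mu'$ and stay genuinely inside the neighborhood of $\mu$, so this hypothesis is doing real work and must be tracked carefully.

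\textbf{The expected obstacle.} The delicate point is the first step: showing that a clopen neighborhood $\mathscr W$ of a single maximal end $\mu$ in a \emph{doubly} pointed space contains an open copy of all of $\sE$. In the self-similar single-maximal-end case this is immediate, but here $\sE$ itself is not self-similar as a whole (it has two maximal ends and splitting it separates them), so I cannot directly apply the self-similarity of $\sE$ to $\mathscr W$. The resolution I would pursue is to apply Lemma~\ref{lem:stable} at the maximal end $\mu$: its conclusion, that $\sE \sqcup \mathscr U'$ is homeomorphic to $\sE$ for small neighborhoods $\mathscr U'$ of $\mu$, combined with a compactness-and-partition argument in the spirit of the proof of Proposition~\ref{prop:Exn}, should let me decompose $\mathscr W$ and recognize a copy of $\sE$ (or at least a copy of $\mathscr U$) inside it. I would expect to first produce a clopen copy of $\mathscr U$ directly by covering the non-maximal part of $\mathscr W$ by stable neighborhoods and absorbing, rather than routing through a full copy of $\sE$, since building $\mathscr V$ directly may sidestep the need to reconstruct the entire self-similar structure and is closer to what Lemma~\ref{lem:stable} hands over.
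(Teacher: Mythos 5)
There is a genuine gap here, and it is concentrated exactly where you flagged the difficulty. Your proposed resolution never engages with the hypothesis that every compact subset of \( S \) is displaceable, and that hypothesis is what actually drives the paper's proof. The intermediate goal of finding an open copy of all of \( \sE \) inside \( \mathscr W \) is false in general: for the two-ended infinite-genus surface (Jacob's ladder), \( \sE \) is a two-point space and a clopen neighborhood of \( \mu \) missing \( \mu' \) is the singleton \( \{\mu\} \), which contains no copy of \( \sE \). Your fallback --- producing a copy of \( \mathscr U \) directly by ``covering by stable neighborhoods and absorbing'' --- is not an argument: Lemma~\ref{lem:stable} is stated in the paper only in the self-similar setting of Section~\ref{sec:cantor} (its conclusion \( \sE \sqcup \mathscr U' \cong \sE \) leans on self-similarity of \( \sE \), which fails in the doubly pointed case), and even granting it, it tells you that small pieces can be adjoined disjointly to \( \sE \), not that an arbitrary clopen \( \mathscr U \) sitting anywhere in \( \sE \) (possibly near \( \mu' \)) can be re-embedded inside a prescribed neighborhood \( \mathscr W \) of \( \mu \). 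That relocation step is the entire content of the lemma and is left unproven.

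The paper's proof supplies the missing mechanism with a surface-level argument: after shrinking \( \mathscr W \) to be disjoint from \( \mathscr U \), realize \( \mathscr W = \widehat\Omega_a \) and \( \mathscr U = \widehat\Omega_b \) for disjoint separating curves \( a, b \), take a pair of pants \( \Sigma \) with boundary \( a \cup b \cup c \), and use displaceability together with Lemma~\ref{lem:finite-index} to find \( f \) in the index-at-most-2 stabilizer \( H \) of \( \mu \) with \( f(\Sigma) \cap \Sigma = \varnothing \). Since \( f(\Sigma) \) is connected and misses \( \Sigma \), it lies in \( \Omega_a \), \( \Omega_b \), or \( \Omega_c \); the observation that \( c \) separates the two maximal ends while \( b \) does not, combined with \( \widehat f(\mu) = \mu \), rules out enough cases that either \( \widehat f(\mathscr U) \subset \mathscr W \) or \( \mathscr U \subset \widehat f(\mathscr W) \), and one sets \( \mathscr V = \widehat f(\mathscr U) \) or \( \mathscr V = \widehat f^{-1}(\mathscr U) \) accordingly. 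If you want to salvage your outline, this displacement step is what you must substitute for the appeal to self-similarity.
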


Figure~\ref{fig:L53} illustrates the proof of Lemma~\ref{lem:self-similar-nbhd} in the case where \(S\) is a surface with exactly two maximal ends, each accumulated by isolated planar ends as well as isolated non-planar ends.

\begin{figure}[h]
 \labellist
 \small\hair 2pt

  \pinlabel {\(a\)} [ ] at 930 450
    \pinlabel {\(b\)} [ ] at 1060 542
    \pinlabel {\(c\)} [ ] at 1476 450
    \pinlabel {\(\Sigma\)} [ ] at 1350 520
    
    \pinlabel {\(\mu\)} [ ] at 60 140
                \pinlabel {\(\mathscr W\)} [ ] at 720 20
                        \pinlabel {\(\mathscr U\)} [ ] at 990 20
                                               \pinlabel {\(\mathscr V\)} [ ] at 720 240
                        
  \pinlabel {\(\Omega(a)\)} [ ] at 800 520
    \pinlabel {\(\Omega(b)\)} [ ] at 1206 585
    \pinlabel {\(\Omega(c)\)} [ ] at 1602 520                        
                        
  \pinlabel {\(\dots\)} [ ] at 132 540
    \pinlabel {\(\dots\)} [ ] at 1905 540

 \endlabellist  
\centering
\includegraphics[width=\textwidth]{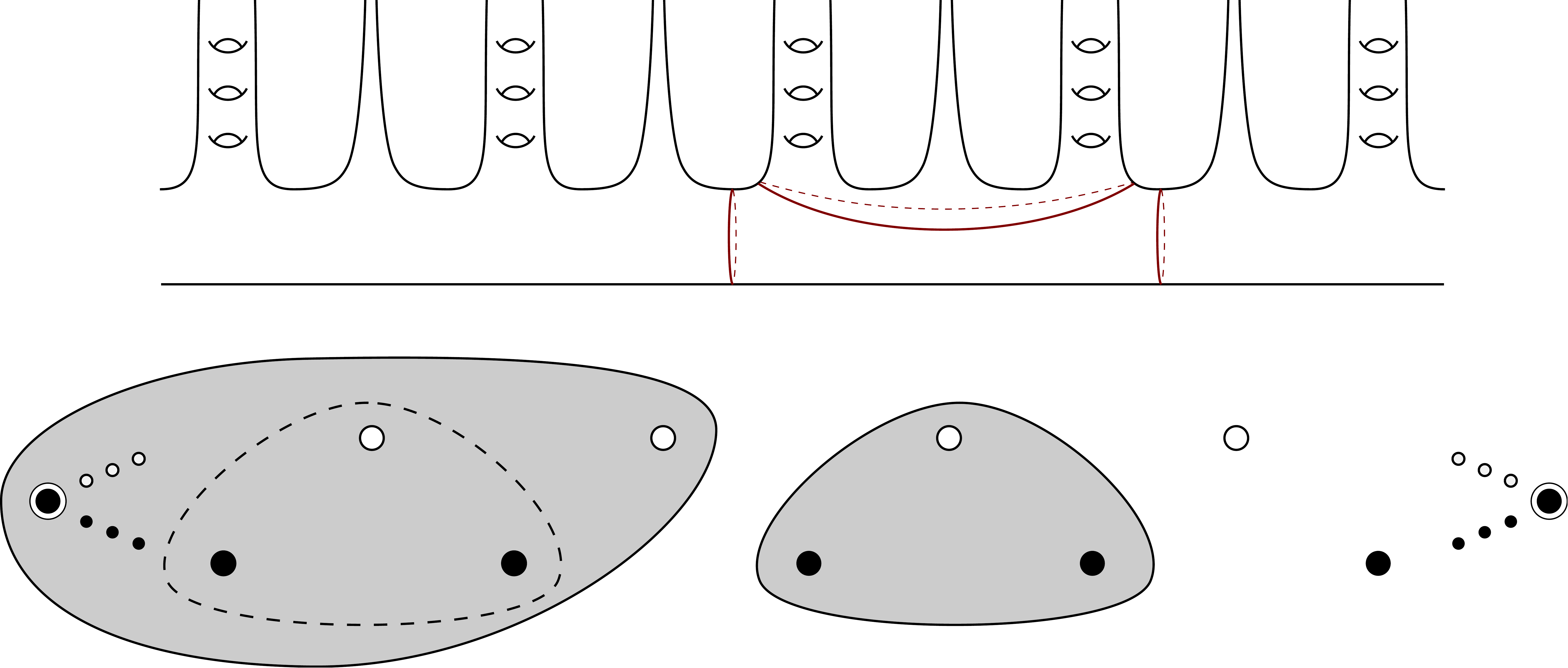}
\caption{The 2-manifold \(S\) (top). The end space \(\sE\) of \( S \) (bottom). }
\label{fig:L53}
\end{figure}

\begin{proof}[Proof of Lemma~\ref{lem:self-similar-nbhd}]
If \( \mathscr U \) is empty, which is necessarily the case if \( S \) is a once-punctured plane, then the statement is trivially true.
We now assume that \( \mathscr U \) is not empty.
By possibly shrinking \( \mathscr W \),  we may assume that \( \mathscr W \cap \mathscr U = \varnothing \).
Let \( H \) denote the stabilizer of \( \mu \) in \( \Homeo^+(S) \) (and hence \( H \) is of index at most 2). 
Let \( a \) and \( b \) be disjoint separating simple closed curves such that there are components \( \Omega_a \) and \( \Omega_b \) of \( S \ssm a \) and \( S \ssm b \), respectively, such that \( \widehat \Omega_a = \mathscr W \), \(  \widehat \Omega_b = \mathscr U \), and \( \Omega_a \cap \Omega_b = \varnothing \). 
Now, let \( \Sigma \) be a pair of pants with \( a \) and \( b \) as two of its boundary components; let \( c \) denote the third component and let \( \Omega_c = S \ssm (\Omega_a \cup \Omega_b \cup \Sigma) \). 
Since \( \Sigma \) is compact and every compact subset is displaceable, by Lemma \ref{lem:finite-index}, there exists \( f \in H \) such that \( f(\Sigma) \cap \Sigma = \varnothing \). 
Note that \( b \) does not separate the maximal ends of \( S \) and that \( c \) does.
Therefore, if \( f(\Sigma) \subset \Omega_a \), then we set \( \mathscr V = \widehat f(\mathscr U) \), and \( \mathscr V \) is necessarily contained in \( \mathscr W \).
Otherwise, \( f(\Sigma) \subset \Omega_c \) and, since \( \widehat f(\mu) = \mu \), it follows that \( \mathscr U \subset \widehat f(\mathscr W) \): we define \( \mathscr V = f^{-1}(\mathscr U) \).
In either case, \( \mathscr V \) is the desired set.
\end{proof}

\begin{Lem}
\label{lem:permuting}
Let \( S \) be a connected orientable 2-manifold with doubly pointed end space \( \sE \) and such that every compact subsurface is displaceable. 
If  \( \mathscr V \) and \( \mathscr W \) are  clopen subsets of \( \sE \) each of which contains a single maximal end of \( \sE \), then \( \mathscr V \) and \( \mathscr W \) are homeomorphic.
In particular, there exists a homeomorphism of \( S \) permuting the two maximal ends. 
\end{Lem}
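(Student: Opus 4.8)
The statement has two parts: first, that any two clopen sets $\mathscr V$ and $\mathscr W$ each containing exactly one maximal end are homeomorphic; and second, that there is a homeomorphism of $S$ actually swapping the two maximal ends. The key tool is Lemma~\ref{lem:self-similar-nbhd}, which lets us embed any maximal-end-free clopen set into a neighborhood of a maximal end, and which thereby encodes a kind of self-similarity local to each maximal end. Since $\sE$ is doubly pointed, write $\mathscr M = \{\mu_1, \mu_2\}$ and arrange (after shrinking) that $\mathscr V$ contains $\mu_1$, that $\mathscr W$ contains $\mu_2$, and that $\mathscr V \cap \mathscr W = \varnothing$, with $\mathscr V \sqcup \mathscr W$ possibly not all of $\sE$.

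\textbf{First step: the two clopen sets are homeomorphic.} I would prove this by a back-and-forth (Cantor--Schr\"oder--Bernstein-style) argument at the level of end spaces. Write $\mathscr V = \{\mu_1\} \sqcup \mathscr V'$ where $\mathscr V' = \mathscr V \ssm \{\mu_1\}$, and similarly $\mathscr W = \{\mu_2\} \sqcup \mathscr W'$; note $\mathscr V'$ and $\mathscr W'$ are clopen (since $\mu_i$ is isolated among the maximal ends in the doubly pointed case—it need not be isolated in $\sE$, so one must be a little careful here). The idea is that $\mathscr V'$ and $\mathscr W'$ both avoid the maximal ends, so by Lemma~\ref{lem:self-similar-nbhd} each embeds as a clopen subset of a neighborhood of the other maximal end, and in fact $\mathscr V$ is homeomorphic to $\{\mu_1\} \sqcup (\text{a clopen copy of }\mathscr W')$ sitting inside any neighborhood of $\mu_1$. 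Applying Lemma~\ref{lem:self-similar-nbhd} in both directions, one absorbs $\mathscr W'$ into $\mathscr V$ and vice versa, and a standard swallowing/back-and-forth argument on these compact zero-dimensional spaces (using that $\mathscr V \cong \{\mu_1\} \sqcup \mathscr V' \cong \{\mu_1\}\sqcup \mathscr V' \sqcup (\text{copies of }\mathscr W')$, and symmetrically) yields a homeomorphism $\mathscr V \to \mathscr W$ matching $\mu_1 \mapsto \mu_2$.

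\textbf{Second step: upgrading to a homeomorphism of $S$.} Choose a separating simple closed curve $c$ with $\widehat\Omega_c = \mathscr V$ for one complementary component $\Omega_c$, so that $\mathscr V$ and its complement $\mathscr W$-containing piece are realized by an actual decomposition of $S$. The homeomorphism $\rho\co \mathscr V \to \sE \ssm \mathscr V$ (or rather to a clopen neighborhood of $\mu_2$) matching the two maximal ends must be upgraded to a homeomorphism of $S$ itself; here I would invoke the surjectivity of $\mcg(S) \to \Homeo(\sE(S), \sE_{np}(S))$ recalled in the Mann--Rafi preliminaries, together with the classification of surfaces. The essential point is that the homeomorphism of end spaces respects the non-planar ends: since $S$ has either zero or infinite genus (as every compact subsurface is displaceable), both $\mathscr V$ and $\mathscr W$ realize the same genus data, so by the classification theorem the end-space homeomorphism is induced by a homeomorphism of $S$. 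Composing with the (local-to-global) extension from the preliminaries gives the desired $\Phi \in \Homeo^+(S)$ interchanging $\mu_1$ and $\mu_2$.

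\textbf{Main obstacle.} The delicate part is the back-and-forth in the first step: one must verify that the clopen copies produced by Lemma~\ref{lem:self-similar-nbhd} can be iterated and assembled coherently, accounting for the non-planar ends (the genus data) so that the final homeomorphism $\mathscr V \to \mathscr W$ sends $\sE_{np} \cap \mathscr V$ onto $\sE_{np} \cap \mathscr W$. If the genus is infinite, every neighborhood of a maximal end carries non-planar ends and one must check Lemma~\ref{lem:self-similar-nbhd} can be applied with the non-planar structure preserved; this is where I expect the argument to require the most care, and it is precisely what makes the hypothesis that $S$ has zero or infinite (but not positive finite) genus essential.
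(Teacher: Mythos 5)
Your overall strategy is the paper's: a back-and-forth on end spaces powered by Lemma~\ref{lem:self-similar-nbhd}, followed by promoting the end-space homeomorphism to a homeomorphism of \( S \). But there is a concrete gap in your first step. You write \( \mathscr V = \{\mu_1\} \sqcup \mathscr V' \) and feed \( \mathscr V' = \mathscr V \ssm \{\mu_1\} \) (and \( \mathscr W' \)) into Lemma~\ref{lem:self-similar-nbhd}. That lemma requires its input \( \mathscr U \) to be \emph{clopen} and disjoint from the maximal ends; since \( \mu_1 \) is in general a non-isolated point of \( \sE \) (you flag this yourself), \( \mathscr V' \) is open but not closed, so the lemma cannot be applied to it, and the decomposition \( \{\mu_1\} \sqcup \mathscr V' \) is not a topological disjoint union. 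The paper's proof resolves exactly this: choose nested clopen neighborhoods \( \mathscr V_n \searrow \{\mu_1\} \) and \( \mathscr W_n \searrow \{\mu_2\} \), and run the back-and-forth on the clopen ``annuli'' \( \mathscr V_n \ssm \mathscr V_{n+1} \) and \( \mathscr W_n \ssm \mathscr W_{n+1} \), each of which is a legitimate input to Lemma~\ref{lem:self-similar-nbhd}; the resulting piecewise-defined map is then extended over \( \mu_1 \mapsto \mu_2 \) by continuity. This countable exhaustion is the technical heart of the argument and is missing from your write-up. A smaller issue: shrinking \( \mathscr V \) and \( \mathscr W \) to make them disjoint loses generality (a homeomorphism between the shrunken sets does not a priori give one between the originals), and disjointness is not needed for the abstract back-and-forth anyway.

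Two further remarks. Your worry about preserving \( \sE_{np} \) is legitimate for the ``in particular'' clause, but it takes care of itself: the homeomorphisms produced by Lemma~\ref{lem:self-similar-nbhd} are of the form \( \widehat f \) for \( f \in \Homeo^+(S) \), so they automatically carry non-planar ends to non-planar ends, and the final \( h \co \mathscr V \to \mathscr W \) inherits this. Your second step (extend a homeomorphism of clopen pieces of \( (\sE, \sE_{np}) \) to a homeomorphism of \( S \) via the classification theorem and the local-to-global extension from the preliminaries) is exactly how the paper intends the last sentence of the lemma to follow. Finally, the degenerate case where \( \mathscr V \) or \( \mathscr W \) is a singleton (the once-punctured plane) should be dispatched separately, as the paper does.
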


\begin{proof}
If either \( \mathscr V \) or \( \mathscr W \) is a singleton, which is necessarily the case if \( S \) is a once-punctured plane, then the statement is trivially true.
We now assume that neither set is a singleton. 
We apply a standard back-and-forth argument.
Let \( \mu_1 \)denote the maximal end contained in \( \mathscr V \) and \( \mu_2 \) the maximal end contained in \(  \mathscr W \).
Choose sequences \( \{\mathscr V_n\}_{n\in\bn} \) and \( \{\mathscr W_n\}_{n\in\bn} \) of clopen neighborhoods in \( \sE \) of \( \mu_1 \) and \( \mu_2 \), respectively, such that 
\begin{enumerate}
\item \( \mathscr V_1 = \mathscr V \) and \( \mathscr W_1 = \mathscr W \),
\item \( \mathscr V_{n+1} \subset \mathscr V_{n} \) and \( \mathscr W_{n+1} \subset \mathscr W_{n} \), and
\item \( \bigcap_{n\in\bn} \mathscr V_n = \{\mu_1\} \) and \( \bigcap_{n\in\bn} \mathscr W_n = \{\mu_2\} \).
\end{enumerate}

By Lemma~\ref{lem:self-similar-nbhd}, there exists a homeomorphism \( f_1 \) from \( \mathscr V_1 \ssm \mathscr V_2 \) into \( \mathscr W_1 \ssm \{\mu_2\} \) (we know the image misses \( \mu_2 \) since \( \mathscr V_1 \ssm \mathscr V_2 \) does not contain a maximal end). 
Again by Lemma~\ref{lem:self-similar-nbhd}, there exists a homeomorphism  \( g_1 \) from \( (\mathscr W_1 \ssm \mathscr W_2) \ssm \mathrm{image}(f_1) \) into \( \mathscr V_2 \ssm \{ \mu_1 \} \).
Let \( \mathscr V_1' = (\mathscr V_1 \ssm \mathscr V_2 ) \cup \mathrm{image}(g_1) \) and \( \mathscr W_1' = (\mathscr W_1 \ssm \mathscr W_2) \cup \mathrm{image}(f_1) \), then \[ h_1 = f_1 \sqcup g_1^{-1} \co \mathscr V_1' \to \mathscr W_1' \] is a homeomorphism. 

Recursively, let \( f_n \) be a homeomorphism from \( (\mathscr V_n \ssm \mathscr V_{n+1}) \ssm (\mathscr V_1' \cup \cdots \cup \mathscr V_{n-1}') \) into \( \mathscr W_n \ssm (\mathscr W_1' \cup \cdots \cup \mathscr W_{n-1}') \) and then choose a homeomorphism \( g_n \)  of \( (\mathscr W_n \ssm \mathscr W_{n+1}) \ssm ( \mathscr W_1' \cup \cdots \cup \mathscr W'_{n-1} \cup \mathrm{image}(f_n)) \) into \( \mathscr V_{n+1} \ssm ( \mathscr V_1' \cup \cdots \cup \mathscr V_{n-1}') \) (the maps \( f_n \) and \( g_n \) exist by Lemma~\ref{lem:self-similar-nbhd}).
Let \[ \mathscr V_n' = ((\mathscr V_n \ssm \mathscr V_{n+1}) \ssm (\mathscr V_1' \cup \cdots \cup \mathscr V_{n-1}')) \cup \mathrm{image}(g_n) \] and \[ \mathscr W_n' = ((\mathscr W_n \ssm \mathscr W_{n+1}) \ssm ( \mathscr W_1' \cup \cdots \cup \mathscr W'_{n-1})) \cup \mathrm{image}(f_n). \]
Then, \( h_n = f_n \sqcup g_n^{-1} \co \mathscr V_n' \to \mathscr W_n' \) is a homeomorphism. 
Now observe that \( \mathscr V \ssm \{\mu_1\} = \bigsqcup_{n\in\bn} \mathscr V_n' \) and \( \mathscr W \ssm \{\mu_2\} = \bigsqcup_{n\in\bn} \mathscr W_n'  \).
This allows us to define the desired homeomorphism \( h \co \mathscr V \to \mathscr W \)  by \( h|_{\mathscr V_n'} = h_n \) and \( h(\mu_1) = \mu_2 \).
\end{proof}

\begin{proof}[Proof of Theorem~\ref{thm:doubly-pointed}]
Let \( S \) be a connected orientable 2-manifold with doubly pointed end space.
If \( S \) has a non-displaceable compact subsurface, then \( \mcg(S) \) does not have the Rokhlin property by Theorem~\ref{thm:nondisplaceable}.
Otherwise, by Lemma \ref{lem:permuting}, the stabilizer of a maximal end is a proper normal subgroup of \( \mcg(S) \); moreover, it is not difficult to see that this subgroup is open, and so, again, \( \mcg(S) \) does not have the Rokhlin property.
\end{proof}

\section{Classifying  mapping class groups with the Rokhlin \\ property: the proof of Theorem~\ref{thm:main}}
\label{sec:proof}

The goal of this section is to prove our main theorem:

\begin{Thm}
\label{thm:main}
The mapping class group of a connected orientable 2-manifold has the Rokhlin property if and only if the manifold is either the 2-sphere or a non-compact manifold whose genus is either zero or infinite and whose end space is self-similar with a unique maximal end.\end{Thm}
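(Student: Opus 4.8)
The plan is to prove Theorem~\ref{thm:main} by assembling the three case-theorems of Sections~\ref{sec:non-displaceable}--\ref{sec:doubly-pointed}, using the Mann--Rafi trichotomy of Theorem~\ref{thm:partition} to organize the cases and handling compact surfaces as a separate base case. The organizing principle is the dichotomy according to whether \( S \) admits a \emph{proper} compact non-displaceable subsurface: this is exactly the hypothesis of Theorem~\ref{thm:nondisplaceable}, while its failure for a \emph{non-compact} surface is precisely the hypothesis ``every compact subsurface is displaceable'' needed to invoke Theorem~\ref{thm:partition}.

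First I would establish the forward implication in contrapositive form: assuming \( S \) is neither the \( 2 \)-sphere nor a non-compact manifold of genus zero or infinite with self-similar end space and a unique maximal end, I would show \( \mcg(S) \) fails the Rokhlin property. If \( S \) contains a proper compact non-displaceable subsurface, Theorem~\ref{thm:nondisplaceable} applies immediately. Otherwise every proper compact subsurface is displaceable; if \( S \) were compact then, since every closed surface of positive genus contains a proper compact non-displaceable subsurface of full genus, we would be forced to have \( S = S^2 \), which is excluded, so \( S \) is non-compact. For non-compact \( S \) every compact subsurface is automatically proper, hence displaceable, so Theorem~\ref{thm:partition} shows \( \mathscr M \) is a singleton, a two-point set, or a Cantor set. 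The two-point (doubly pointed) case is dispatched by Theorem~\ref{thm:doubly-pointed}; in the Cantor case the end space is self-similar and \( \mathscr M \) is not a singleton, so Theorem~\ref{thm:self-similar} gives the failure of the Rokhlin property. The singleton case would produce a self-similar end space with a unique maximal end, and displaceability excludes positive finite genus, placing \( S \) in the forbidden list---a contradiction; so this case cannot arise.

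Next I would prove the reverse implication. If \( S = S^2 \) then \( \mcg(S) \) is trivial and hence tautologically has a dense conjugacy class. Otherwise \( S \) is non-compact with self-similar end space, a unique maximal end, and genus zero or infinite; since a non-compact genus-zero surface is planar, \( S \) is either planar or of infinite genus and so meets the standing hypotheses of Section~\ref{sec:single}. Then Lemma~\ref{lem:singleton-yes} shows \( \mcg(S) \) has the JEP, and Theorem~\ref{thm:ergodic} converts this into a dense conjugacy class, yielding the Rokhlin property. Alternatively, Proposition~\ref{prop:top-generate}(2) shows every compact subsurface is displaceable, so one may instead cite Theorem~\ref{thm:self-similar} directly.

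Since all of the substantive work lives in the cited results, the only real difficulty in proving Theorem~\ref{thm:main} is organizational: aligning the displaceability dichotomy with the trichotomy of Theorem~\ref{thm:partition}, and carefully isolating the edge cases among compact and finite-type surfaces. I expect the most delicate point to be the treatment of the \( 2 \)-sphere, which is the unique closed surface possessing no proper compact non-displaceable subsurface; because every compact surface fails the hypothesis ``every compact subsurface is displaceable'' (the whole surface being non-displaceable), the sphere falls outside the scope of both Theorem~\ref{thm:nondisplaceable} and Theorem~\ref{thm:partition} and must be separated out by hand.
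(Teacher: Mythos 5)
Your proposal is correct and follows essentially the same route as the paper: handle the sphere separately, use Proposition~\ref{prop:top-generate}(2) (or Lemma~\ref{lem:singleton-yes} directly) for the positive direction, and combine the trichotomy of Theorem~\ref{thm:partition} with Theorems~\ref{thm:nondisplaceable}, \ref{thm:self-similar}, and \ref{thm:doubly-pointed} for the converse. Your explicit isolation of the compact case (observing that the sphere lies outside the scope of both the non-displaceability theorem and the trichotomy) is a slightly more careful bookkeeping of an edge case the paper passes over quickly, but it is the same argument.
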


\begin{proof}
First, we note that the mapping class group of the 2-sphere is trivial and hence trivially has a dense conjugacy class.
Now, let \( S \) be a connected orientable 2-manifold that is non-compact, whose end space is self-similar with a unique maximal end, and whose genus is either zero or infinite.
It readily follows from Proposition~\ref{prop:top-generate}(2) that every compact subsurface of \( S \) is displaceable, and hence, by Theorem~\ref{thm:self-similar}, \( \mcg(S) \) has the Rokhlin property.

For the converse, by Theorem~\ref{thm:partition}, every connected orientable 2-manifold satisfies the hypotheses of at least one of Theorem~\ref{thm:nondisplaceable}, Theorem~\ref{thm:self-similar}, or Theorem~\ref{thm:doubly-pointed}.
In which case, we see that if the mapping class group of such a 2-manifold has the Rokhlin property, then the 2-manifold must either be a 2-sphere or a non-compact manifold whose end space is self-similar with a unique maximal point and in which every compact subsurface is displaceable.
The last condition forces the surface to have either zero or infinite genus, as desired. 
\end{proof}

\begin{Rem}
An earlier version of this note claimed that the self-similarity condition was unnecessary in the statement of Theorem~\ref{thm:main}; however,  the proof contained an error, which was pointed out by Malestein and Tao.
In response to this issue, Mann and Rafi \cite{MannNon} have since shown that there exists a 2-manifold whose end space has a unique maximal end and yet fails to be self-similar. 
This shows that the self-similarity assumption in Theorem~\ref{thm:main} is in fact necessary.\end{Rem}


\section{No generic mapping classes}
\label{sec:meager}

Before stating the main theorem in this subsection, we need to recall some basic topological definitions. 
A subset \( A \) of a topological space is \emph{nowhere dense} if the interior of its closure is empty; it is \emph{meager} if it is the countable union of nowhere dense subsets; it is \emph{comeager} if its complement is meager; and, it has the \emph{Baire property} if there exists an open subset \( U \) such that the symmetric difference \( A\triangle U \) is meager.

Following Truss \cite{TrussGeneric}, we say an element \( f \) of a Polish group \( G \) is \emph{generic} if its conjugacy class is comeager. Our final result shows that no nontrivial mapping class group has a generic element.

\begin{Thm}
\label{thm:meager}
The mapping class group of an orientable 2-manifold has a comeager conjugacy class if and only if the mapping class group is trivial. 
\end{Thm}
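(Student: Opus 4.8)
The plan is to show that a nontrivial mapping class group cannot have a comeager conjugacy class, while the trivial direction is immediate (in the trivial group the single conjugacy class is the whole space, which is comeager). So I would focus on the forward implication: assuming \( \mcg(S) \) is nontrivial, I would produce, for any candidate conjugacy class \( C \), a witness that \( C \) is not comeager. By Theorem~\ref{thm:main} and the preceding sections, the only interesting case is when \( S \) is non-compact with self-similar end space and a unique maximal end (otherwise \( \mcg(S) \) fails even the Rokhlin property, so it has no dense—let alone comeager—conjugacy class, and we are done). Thus the hard work is confined to exactly the surfaces whose mapping class groups \emph{do} have a dense conjugacy class.

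\textbf{Key steps.}
First I would recall the standard fact from Polish group theory that a comeager conjugacy class, if it exists, is unique (two comeager sets must intersect, and conjugacy classes partition the group). So it suffices to exhibit \emph{two} distinct dense conjugacy classes: if both were comeager they would have to coincide, a contradiction, so at most one is comeager; then a symmetry or counting argument should rule out even that one. The natural candidates are already built in the excerpt: the Remark following Proposition~\ref{prop:explicit} constructs, for the Loch Ness monster surface, two \emph{non-conjugate} mapping classes \( \Phi \) and \( \Phi' = h\Phi \), each with dense conjugacy class, distinguished by the behavior of iterated images of a curve \( c \) relative to compact sets. I would generalize this construction to every surface satisfying the hypotheses of Section~\ref{sec:single}, using the countable dense subgroup \( G \) from Proposition~\ref{prop:top-generate} together with the self-similarity of \( \sE \) to realize the same ``two distinct dense conjugacy classes'' phenomenon. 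Having two dense conjugacy classes immediately forbids either from being comeager, completing the forward direction.

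\textbf{Alternative and the main obstacle.}
A cleaner alternative, which I would try first, is to find a continuous (or at least Baire-measurable) conjugacy invariant that takes more than one value on a dense set and whose level sets are all meager; a nontrivial invariant that is constant on conjugacy classes but non-constant on any comeager set rules out comeager classes directly. For instance, the invariant detecting whether iterates of some fixed curve eventually leave every compact set—the very feature separating \( \Phi \) from \( \Phi' \)—is conjugation-invariant, and I would aim to show each of its fibers is nowhere dense. The main obstacle I anticipate is controlling the \emph{dynamical} invariant uniformly across the whole (uncountable) family of surfaces in Section~\ref{sec:single}: the Remark's argument is tailored to \( L \), and promoting ``a curve whose forward orbit escapes every compact set'' to a robustly conjugation-invariant, fiber-wise-meager statement on an arbitrary self-similar one-maximal-end surface requires care—both in defining the invariant topologically (so its fibers are genuinely nowhere dense in the quotient topology on \( \mcg(S) \)) and in verifying that both values are attained densely. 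Establishing that the fibers are nowhere dense, rather than merely proper, is where the real category-theoretic estimate lives, and I expect that to be the crux of the proof.
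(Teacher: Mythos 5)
There is a genuine gap in the forward direction, and it sits exactly where you wave your hands. Exhibiting two distinct dense conjugacy classes (the classes of \( \Phi \) and \( \Phi' \) from the remark after Proposition~\ref{prop:explicit}) only shows that \emph{at most one of those two} is comeager; it says nothing about whether one of them, or any of the uncountably many other classes, is comeager. Density and comeagerness are not in tension: \( \bq \) and \( \br \ssm \bq \) are both dense in \( \br \), yet the latter is comeager. So the deferred ``symmetry or counting argument'' is not a finishing touch but the entire content of the theorem, and there is no visible symmetry of \( \mcg(L) \) exchanging the classes of \( \Phi \) and \( \Phi' \). Your alternative---a conjugation-invariant, Baire-measurable function all of whose fibers are meager---is the right \emph{shape} of argument, but you correctly identify that proving the fibers are nowhere dense is the crux, and you do not supply it; moreover the proposed invariant (``some curve's forward orbit escapes every compact set'') is not obviously Baire-measurable and there is no a priori reason one of its fibers could not be comeager.

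For comparison, the paper's proof works with an \emph{arbitrary} hypothetical comeager class of an element \( f \) and localizes it: by Proposition~\ref{prop:weakly-generic} (a Kechris--Rosendal-type lemma), if \( f^{\mcg(S)} \) is non-meager and \( H \) is the closed, countable-index stabilizer of a separating curve \( \gamma \), then \( f^H \) is comeager---hence dense---on some small open \( V \ni f \). One then constructs two open sets \( W_1 = fU_A \) and \( W_2 = gfU_A \) inside \( V \), with \( g \) built from large curve-graph displacements on carefully chosen subsurfaces \( \Sigma_0, \ldots, \Sigma_n \), and shows via subsurface projections that \( W_1 \cap W_2^h = \varnothing \) for every \( h \in H \). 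Since density of \( f^H \) in \( V \) forces some \( H \)-conjugate of \( f \in W_1 \) to land in \( W_2 \), this is a contradiction. In other words, the proof is a \emph{relative failure of the joint embedding property} in the spirit of Theorem~\ref{thm:nondisplaceable}, not a multiplicity-of-dense-classes argument. To repair your proposal you would need either this localization step or a genuine proof that every fiber of your invariant is meager; as written, neither is present.
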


Before proving Theorem~\ref{thm:meager}, we need some more preliminaries, both from surface topology and from the theory of Polish groups.  We start with the latter.
The statement and argument of the following proposition is a modified version of the one given by Tent--Ziegler in their online notes \cite{TentZiegler} on an article of Kechris--Rosendal \cite{KechrisRosendal}.
In what follows, \( f^G \) will denote the conjugacy class of an element \( f \) in a group \( G \).
We say that a set \( A \) is \emph{comeager on an open set} \( U \) if \( A \cap U \) is comeager in \( U \). 

\begin{Prop}
\label{prop:weakly-generic}
Let \( G \) be a Polish group, let \( f \in G \) have a non-meager conjugacy class, and let \( U \subset G \) be any open neighborhood of \( f \). 
If \( H \) is a subgroup of the stabilizer of \( U \) such that \( H \) is \( G_\delta \) and has countable index in \( G \), then there exists a open neighborhood \( V \) of \( f \) contained in \( U \) such that \( f^H \) is comeager on \( V \).
\end{Prop}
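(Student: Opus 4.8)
The plan is to reduce the proposition to two facts: that the relatively small conjugacy class $f^H$ is already non-meager in $G$, and that a non-meager orbit of a Polish group action is automatically comeager on a suitable open set. Throughout, I read ``stabilizer of $U$'' as the setwise stabilizer $\{g \in G : U^g = U\}$ under the conjugation action, consistent with the notation $U^g = gUg^{-1}$. First I would record the consequences of the two hypotheses on $H$. Since $H$ is a $G_\delta$ subset of the Polish group $G$, it is completely metrizable in the subspace topology, so $H$ is itself a Polish group; consequently $f^H$, being the continuous image of $H$ under $h \mapsto hfh^{-1}$, is analytic and hence has the Baire property. Since $H$ lies in the stabilizer of $U$, every $h \in H$ satisfies $hUh^{-1} = U$; because $f \in U$ this gives $f^H \subseteq U$, and moreover conjugation by any $h \in H$ restricts to a self-homeomorphism of $U$ that preserves the set $f^H$ (as $\{h^{-1}h'f h'^{-1}h : h' \in H\} = f^H$). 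These are the only two places the hypotheses on $H$ enter.

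\smallskip

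The crux is to show that $f^H$ is non-meager in $G$. Since $H$ has countable index, fix left-coset representatives so that $G = \bigsqcup_{n} g_n H$. Writing an arbitrary $g = g_n h$ and computing $gfg^{-1} = g_n(hfh^{-1})g_n^{-1}$, I would observe that as $h$ ranges over $H$ the element $hfh^{-1}$ ranges over $f^H$, so the contribution of the coset $g_n H$ to the $G$-conjugacy class is exactly $(f^H)^{g_n} = g_n f^H g_n^{-1}$. This yields the decomposition
\[
f^G \;=\; \bigcup_{n} (f^H)^{g_n},
\]
a \emph{countable} union. As $f^G$ is non-meager by hypothesis and meagerness is stable under countable unions, some $(f^H)^{g_{n_0}}$ must be non-meager. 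But conjugation by $g_{n_0}^{-1}$ is a self-homeomorphism of $G$ carrying $(f^H)^{g_{n_0}}$ onto $f^H$, and self-homeomorphisms preserve the meager ideal; therefore $f^H$ is non-meager in $G$. Since $f^H \subseteq U$ and $U$ is open, $f^H$ is then also non-meager in the Polish space $U$.

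\smallskip

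To finish, I would invoke the standard fact that a set with the Baire property that is non-meager is comeager on some nonempty open set: applying this to $f^H$ inside $U$ produces a nonempty open $V_0 \subseteq U$ on which $f^H$ is comeager. It remains to move $V_0$ so that it contains $f$. Comeagerness forces $f^H \cap V_0 \neq \varnothing$, so choose $x = hfh^{-1} \in f^H \cap V_0$. Conjugation by $h^{-1}$ is a homeomorphism of $U$ to itself that fixes the set $f^H$ and sends $x$ to $f$; hence it carries $V_0$ to an open neighborhood $V = h^{-1}V_0 h$ of $f$ contained in $U$ on which $f^H$ is again comeager, as desired. The main obstacle is the non-meagerness step: the naive approach of intersecting with $U$ or passing to a somewhere-comeager open set for $f^G$ leaves one with the wrong conjugacy class, and the point is precisely that the coset decomposition exhibits $f^G$ as a countable union of \emph{homeomorphic copies} of $f^H$, so that non-meagerness descends from $f^G$ to $f^H$; the $G_\delta$ hypothesis is what legitimizes the Baire-property argument on $f^H$, and the stabilizer hypothesis is what keeps the entire discussion inside $U$.
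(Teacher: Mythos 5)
Your proposal is correct and follows essentially the same route as the paper: non-meagerness of \( f^H \) via the countable coset decomposition of \( f^G \) into conjugates of \( f^H \), the Baire property of \( f^H \) via Lusin--Sierpi\'nski applied to the Polish group \( H \), localization to an open set where \( f^H \) is comeager, and a final conjugation by an element of \( H \) to move that open set onto a neighborhood of \( f \) inside \( U \). The only differences are expository (explicit coset representatives, and phrasing the localization as ``comeager on some nonempty open set'' rather than via a symmetric difference with an open set).
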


\begin{proof}
Since \( H \) is countable index in \( G \),  \( f^G \) is a countable union of translates of \( f^H \) under the conjugation action of \( G \).
Therefore, if \( f^H \) were meager, then \( f^G \) would be a countable union of meager sets and hence meager itself.
But, this would contradict \( f^G \) being non-meager.
Therefore, \( f^H \) is non-meager.

It is a standard fact that \( G_\delta \) subsets of Polish spaces are Polish (see \cite[Theorem~3.11]{KechrisDescriptive}), and hence \( H \) is Polish. 
Moreover, \( f^H \) is the image of \( H \) under the continuous map \( H \to G \) given by \( h \mapsto hfh^{-1} \), and hence, by a classical result of Lusin--Sierpi\'nski (see~\cite[Theorem 21.6]{KechrisDescriptive}), \( f^H \) has the Baire property.

This guarantees the existence of an open subset \( W \) of \( G \) such that \( f^H \triangle W \) is meager.
Without loss of generality, we may assume \( f \in W \): for if not, then since \( W \ssm f^H \) is meager and \( W \) is non-empty,  \( f^H \cap W \) is non-empty.
Let \( h^{-1}fh \in f^H \cap W \) for some \( h \in H \), then \( f \in (f^H \cap W)^h = f^H \cap W^h \).
It follows that \( (f^H \triangle W)^h = f^H \triangle W^h \) is meager, and so we can replace \( W \) with \( W^h \).
Note that \( f^H \) is comeager on every open subset of \( W \); hence, we simply let \( V \) be any open neighborhood of \( f \) contained in \( U \cap W \). 
\end{proof}

Now, we must discuss surface topology.
Let \( \Sigma \) denote a surface with non-empty boundary. 
A \emph{proper} arc in \( \Sigma \) is the image of a continuous mapping \( \alpha \co [0,1] \to \Sigma \) such that \( \alpha(0), \alpha(1) \in \partial \Sigma \), and \( \alpha|_{(0,1)} \) is a proper map from \( (0,1) \) to \( \Sigma \ssm \partial \Sigma \). 
A proper simple arc \( \alpha \) is \emph{essential} if no component of \( \Sigma \ssm (\alpha \cup \partial \Sigma) \) is homeomorphic to a disk. 
We let \( \cac(\Sigma) \) denote the set of isotopy classes\footnote{This definition is slightly non-standard in the sense that it is generally required that the isotopies are taken relative to \( \partial \Sigma \); however, this point will not affect our usage.} of arcs on \( \Sigma \) together with the isotopy classes of essential simple closed curves on \( \Sigma \).
As with \( \cc(\Sigma) \), we say two elements of \( \cac(\Sigma) \) are adjacent if they have disjoint representatives, and we let \( d_{\cac(\Sigma)} \) denote the associated graph metric. 
Now, let \( \Sigma \) be a subsurface of a 2-manifold \( S \), then for any \( c \in \cc(S) \) that has nontrivial geometric intersection with \( \partial \Sigma \), each component of \( c \cap \Sigma \) yields an element of \( \cac(\Sigma) \) (assuming \( c \) and \( \partial \Sigma \) are in minimal position); moreover, the set of all such components has diameter two just in \( \cac(\Sigma) \). 
Standard properties of the embedding of \( \cc(\Sigma) \) into \( \cac(\Sigma) \) allow us to use results from \cite{MasurGeometryI}; in particular, if \( \Sigma \) is finite type and \( \cc(\Sigma) \) is connected, then for any \( D > 0 \) there exists \( g \in \mcg(\Sigma) \) such that \( d_{\cac(\Sigma)}(c,g(c))> D \) for all \( c \in \cac(\Sigma) \).
For a reference, see \cite[Section 3]{SchleimerNotes}.

Figure~\ref{fig:L53} gives a schematic for the curves and subsurfaces used in the proof of Theorem~\ref{thm:meager}.

\begin{figure}[t]
 \labellist
 \small\hair 2pt

 \pinlabel {$\gamma$} [ ] at 30 480
  \pinlabel {$b_1$} [ ] at 510 480
    \pinlabel {$b_1'$} [ ] at 800 480

    \pinlabel {$\Sigma_0$} [ ] at 260 445    
    \pinlabel {$\Sigma_1$} [ ] at 660 570
    
        \pinlabel {$\Sigma_2$} [ ] at 30 640
        \pinlabel {$\Sigma_3$} [ ] at 200 640
        \pinlabel {$\Sigma_4$} [ ] at 370 640

   \pinlabel {$b_2$} [ ] at 90 580
     \pinlabel {$b_3$} [ ] at 260 580
       \pinlabel {$b_4$} [ ] at 430 580
       
               \pinlabel {$f(\Sigma_2)$} [ ] at 20 220
        \pinlabel {$f(\Sigma_3)$} [ ] at 190 220
        \pinlabel {$f(\Sigma_4)$} [ ] at 540 300
 \endlabellist  
\centering
\includegraphics[scale=.35]{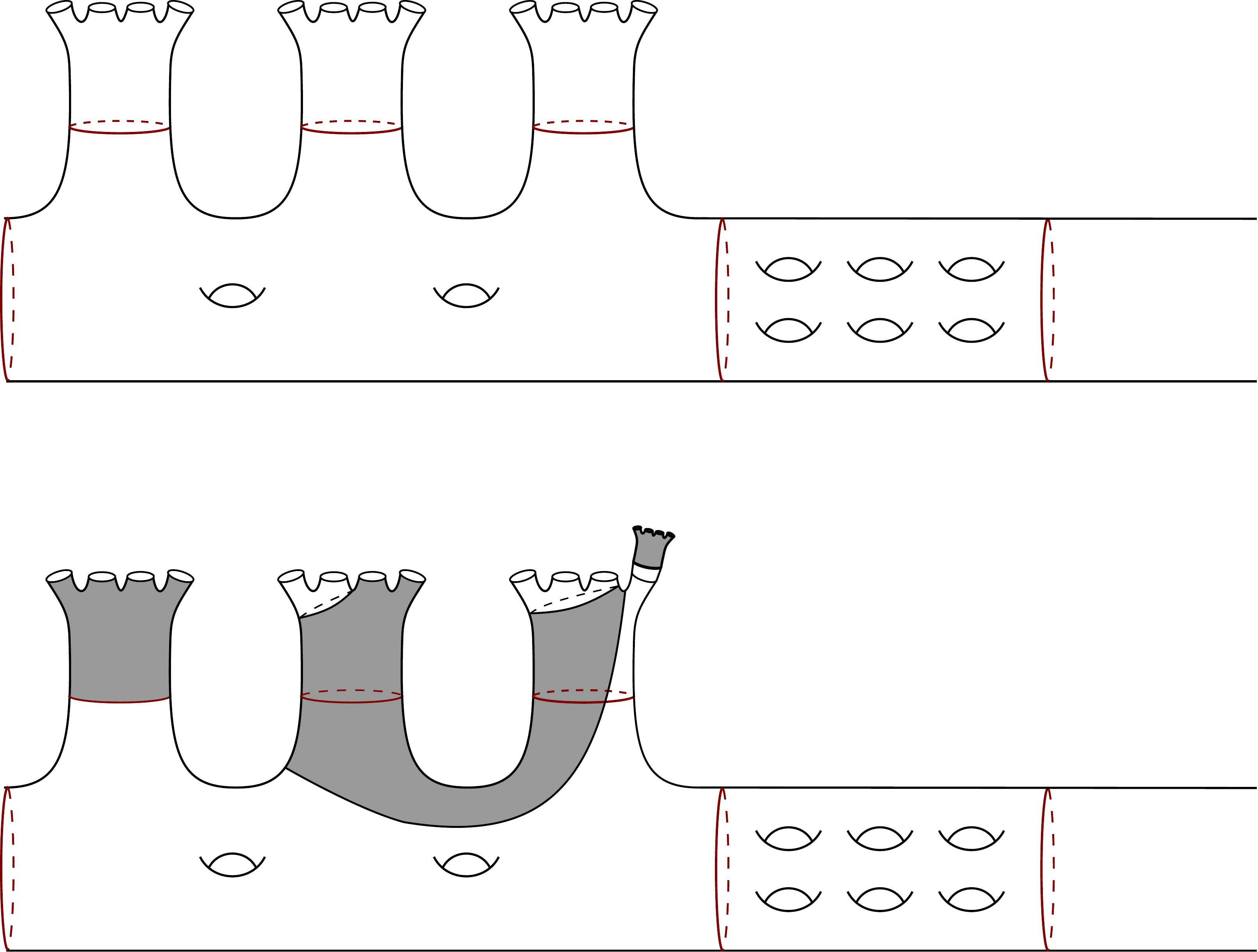}
\caption{The curves and subsurfaces used in the proof of Theorem~\ref{thm:meager} (top). The three scenarios arising in the definition of the elements \( g_j \in \mcg(\Sigma_j) \) for  \( j \in \{2, \ldots, n\} \) (bottom). }
\label{fig:T71}
\end{figure}

\begin{proof}[Proof of Theorem~\ref{thm:meager}]
Let \( S \) be a non-simply connected orientable 2-manifold.
Assume that \( \mcg(S) \) has an element \( f \) with a comeager conjugacy class.
In a Polish space, comeager sets are dense, and hence the conjugacy class of \( f \) is dense.
Therefore, by Theorem~\ref{thm:main}, \( S \) has either zero or infinite genus and a self-similar end space with a unique maximal end.
Let \( \gamma \in \cc(S) \) be separating and let \( U = U_{\{\gamma\}} \).
Then, by the density of the conjugacy class of \( f \), there exists a conjugate of \( f \) in \( U \):
by replacing \( f \) with this conjugate, we may assume without loss of generality that \( f \in U \).

Let \( H \) denote the stabilizer of \( \gamma \), and hence of \( U \), in \( \mcg(S) \), and note that \( H \) is closed (and hence \( G_\delta \)) and countable index in \( \mcg(S) \) (in fact, \( H = U\)).  
Let \( V \) be the neighborhood of \( f \) contained in \( U \) guaranteed by Propsoition~\ref{prop:weakly-generic}; in particular, \( f^H \cap V \) is comeager, and hence dense, in \( V \).
By Proposition~\ref{prop:top-generate}, there exists a separating simple closed  curve \( b_1 \)  and a mapping class \( \psi \in V \) such that \( \psi \) has a representative that restricts to the identity on the complementary component of \( b_1 \) that is a neighborhood of the unique maximal end. 
For convenience, we may choose \( b_1 \)  to be homotopically distinct from \( \gamma \).

Now, choose a connected finite-type subsurface \( \Sigma_0 \) with stable Alexander system \( A_0 \) satisfying
\begin{itemize}
\item \( \Sigma_0 \) is a closed subset of \( S \),
\item \( fU_{A_0} \subset V \),
\item \( \gamma \) and \( b_1 \) are components of \( \partial \Sigma_0 \), 
\item each component of \( \partial \Sigma_0 \)  is separating,
\item each complementary component of \( \Sigma_0 \)  is of infinite type.
\end{itemize}
Note that \( S \) has negative Euler characteristic and hence every subsurface that has at least two essential non-homotopic boundary components also has negative Euler characteristic,  such as \( \Sigma_0 \). 
Let \( \gamma, b_1, b_2, \ldots, b_n \) be a labelling of the boundary components of \( \partial \Sigma_0 \).
Let \( \Sigma_1 \) be a connected finite-type surface of \( S \) satisfying
\begin{itemize}
\item \( \partial \Sigma_1 \) has two boundary components,  \( b_1 \) and another curve we label \( b_1' \),
\item \( \Sigma_1 \cap \Sigma_0 = b_1 \),
\item \( \Sigma_1 \) either has positive genus or \( \widehat \Sigma_1 \) contains at least three ends, and 
\item the Euler characteristic of \( \Sigma_1 \) is strictly less than that of \( \Sigma_0 \) . 
\end{itemize}
For each \( j \in \{2, \ldots, n\} \), choose a connected compact subsurface \( \Sigma_j \) whose intersection with \( \Sigma_0 \) is \( b_j \) and such that \( \Sigma_j \) is homeomorphic to the 2-sphere with five pairwise-disjoint open disks removed. 
Let \( \Sigma = \bigcup_{j=0}^n \Sigma_j \) and let \( A \) be a stable Alexander system for \( \Sigma \), and note that \( fU_A \subset fU_{A_0}\subset V \). 

Let \( A_1 \) be a stable Alexander system for \( \Sigma_1 \).
Then, \( \psi\in U_{A_1} \) and hence \( V \cap U_{A_1} \) is nonempty.
In particular, there must exist an \( H \)-conjugate of \( f \) contained in this intersection, and hence, without loss of generality, we may assume that \( f \in U_{A_1} \). 

It will be helpful to fix a representative of \( f \), which we again denote by \( f \), satisfying the following:
For any component \( \delta \) of \( \partial \Sigma_j \) and any component \( \delta' \) of \( \partial \Sigma_i \),   \( \delta = f(\delta') \) and \( f \) fixes \( \delta \) pointwise whenever \( \delta \) and \( f(\delta') \) are homotopic (such as when \( \delta = b_1 \)), and \( | \delta \cap f(\delta') | = i(\delta,f(\delta')) \) whenever \( \delta \) and \( f(\delta') \) are not homotopic. 
%
%
%

We let \( W_1 = fU_A \).
We now build an open subset  \( W_2=gfU_A \) of \( V \) by constructing a mapping class \( g \).
We will go on to show that every \( H \)-conjugate of \( W_2 \) is disjoint from \( W_1 \), which will contradict the assumption that the conjugacy class of \( f \) is comeager. 
By construction, the embedding of \( f(\Sigma_j) \) into \( S \) induces a monomorphism of \( \mcg(f(\Sigma_j)) \) into \( \mcg(S) \); in what follows, we will identify \( \mcg(f(\Sigma_j)) \) with its image in \( \mcg(S) \) under this monomorphism. 
Let \[ K = \sum_{2\leq i,j \leq n} i(b_i, f(b_j)). \]
Choose \( g_1 \in \mcg(\Sigma_1) \) such that \( d_{\cc(\Sigma_1)}(c,g_1(c)) > 2K+9 \) for every \( c \in \mathcal{C}(\Sigma_1) \). 
Next, we need to carefully choose \( g_j \in \mcg(\Sigma_j) \) for  \( j \in \{2, \ldots, n\} \): there are three scenarios. 

(i) If \( f(\Sigma_j) \) and \( \Sigma_j \) are homotopic, then we may write \( f = f_j'\circ f' \), where \( f_j' \) has a representative that restricts to the identity on the complement of \( b_j \) containing \( \Sigma_0 \) and \( f' \) has a representative that restricts to the identity on the complement of \( b_j \) containing \( \Sigma_j \).
In this case, we choose  \( g_j' \in\mcg(\Sigma_j) \) such that \( d_{\cac(\Sigma_j)}(g_j'(c), c) > 1 \) for all \( c \in \cac(\Sigma_j) \), and let \( g_j = g_j'\circ (f_j')^{-1} \).

(ii) If \( f(\Sigma_j) \) and \( \Sigma_j \) have nontrivial geometric intersection, but are not homotopic, then let \( \mathcal A_j \) denote all the simple arcs in the intersection of \( \Sigma_j \) and \( f(\Sigma_j) \) with endpoints on \( \partial \Sigma_j \cap \partial f(\Sigma_j) \).  
Let \( \beta \) be a component of the intersection of \( \partial \Sigma_j \) with \( f(\Sigma_j) \), then \( \beta \in \cac(f(\Sigma_j)) \) and \( d_{\cac(f(\Sigma_j))}(\beta, \alpha) = 1 \) for all \( \alpha \in \mathcal A_j \); hence, the diameter of \( \mathcal A_j \) in \( \cac(f(\Sigma_j)) \) and the diameter of \( f(\mathcal A_j) \) in \( \cac(f(\Sigma_j)) \) are both equal to two. 
Therefore, the diameter \( d_j \) of \( \mathcal A_j \cup f(\mathcal A_j) \) is finite in \( \cac(f(\Sigma_j)) \).
Let \( g_j \in \mcg(f(\Sigma_j)) \) such that \( d_{\cac{(f(\Sigma_j))}}(c, g_j(c)) > d_j \) for all \( c \in \cac(f(\Sigma_j)) \).

(iii) If \( f(\Sigma_j) \) does not fit into the above two cases, then simply let \( g_j \) be the identity (this case will not play a role in the proof).

Set \[ g = \prod_{j=1}^n g_j \]
and we have \( W_2 = gfU_A \).
Observe that \( W_1, W_2 \subset V \).
Therefore, by the assumption that the conjugacy class of \( f \) is comeager, there exists an \( H \)-conjugate of \( f \) in \( W_2 \), which allows us to conclude that there exists \( h \in H \) such that \( W_1\cap W_2^h \neq \varnothing \).  
The goal is to show that such an \( h \) cannot not exist: suppose to the contrary that \( h \) exists and let \( \vp \in W_1 \cap W_2^h \).
The remainder of the proof splits into three cases.

\textbf{Case 1:} \( h(\Sigma_1) \) and \( \Sigma_1 \) have nontrivial geometric intersection.
First note that it cannot be that \( h(\Sigma_1) \) is homotopic to \( \Sigma_1 \), since then \( \vp(a) = a \) and  \( \vp(a) = g_1(a) \) for each \( a \in A_1 \), which is of course impossible.
Therefore, there exists a curve \( a \in \cc(\Sigma_1) \) with a nontrivial projection \( b \) to \( \cc(h(\Sigma_1)) \). 
Then, \( d_{\cc(\Sigma_1)}(g_1h^{-1}(b), h^{-1}(b)) > 2 \), and so \[ d_{\cc(h(\Sigma_1))}(\vp(b), b) = d_{\cc(h(\Sigma_1))}(hg_1h^{-1}(b), b)>2 .\] 
But since \( \vp(a) = f(a) = a \), it must be that \( \vp(b) \) is a projection of \( a \) to \( h(\Sigma_1) \), and hence \( d_{\cc(h(\Sigma_1))}(\vp(b), b) \leq 2 \), contradicting the above inequality.

\textbf{Case 2:} \( h(\Sigma_1) \) and \( \Sigma_1 \) have trivial geometric intersection, but \( h(\Sigma_1) \) and \( \Sigma_0  \) have nontrivial geometric intersection. 
First observe that the restriction on the Euler characteristic on \( \Sigma_1 \) guarantees that \( h(\Sigma_1) \) is not contained in \( \Sigma_0 \). 
This guarantees that there exists \( j \in \{2, \ldots, n\} \) such that either \( i(b_j, h(b_1)) \neq 0 \) or \( i(b_j, h(b_1')) \neq 0 \);  assume the former is true (the argument is the same for the other case). 
It follows that  \( b_j \) has a nontrivial projection \( b \) to \( \cc(h(\Sigma_1)) \).

\begin{align*}
d_{\cc(h(\Sigma_1))}(\vp(b), b) 	&= d_{\cc(h(\Sigma_1))}(hgfh^{-1}(b), b) \\
						&= d_{\cc(\Sigma_1)}(gfh^{-1}(b), h^{-1} (b))\\
						&= d_{\cc(\Sigma_1)}(g_1(h^{-1}(b)),h^{-1}(b)) \\
						&> 2K+9.
\end{align*}

However, \( \vp(b_j) = f(b_j), \vp(h(b_1))=h(b_1) \), and \( \vp(h(b_1'))=h(b_1') \); hence, \( \vp(b) \) must be a projection of \( f(b_j) \) to \( h(\Sigma_1) \).
Since \( i(b_j, f(b_j)) \leq K \), we have that \( i(\vp(b), b) \leq K+4 \) and hence \[ d_{\cc(h(\Sigma_1))}(\vp(b),b) \leq 2(K+4)+1 = 2K+9, \] where the inequality comes from a standard argument in surface topology, see \cite[Lemma 2.1]{MasurGeometryI}.
Hence, we have arrived at a contradiction.

\textbf{Case 3:} \( h(\Sigma_1) \) has trivial geometric intersection with both \( \Sigma_0 \) and  \( \Sigma_1 \).
Since \( \Sigma_1 \) and all its \( H \)-conjugates separate \( \gamma \) from the maximal end of \( S \), we must have that \( \Sigma_1 \) and \( \gamma \) are on the same side of \( h(\Sigma_1) \). 
And since \( \gamma \) is a boundary component of both \( \Sigma_0 \) and \( h(\Sigma_0) \), we must have that \( h(\Sigma_0) \cap \Sigma_1 \neq \varnothing \).
Again, the restriction on the Euler characteristic of \( \Sigma_1 \) guarantees that \( \Sigma_1 \) is not contained in \( h(\Sigma_0) \). 
In particular, there exists \( j \in \{2, \ldots, n\} \) such that \( i(h(b_j), b_1) \neq 0 \).
Let \( \alpha \) denote a component of \( b_1 \cap h(\Sigma_j) \), so that \( \alpha \subset \Sigma \cap h(\Sigma) \); hence \( \vp(\alpha) = \alpha \) and \( \vp(\alpha) = hgfh^{-1}(\alpha) \).
In particular, recalling the definition of \( g \), we have
\begin{align*}
hg_jfh^{-1}(\alpha)
				&= hgfh^{-1}(\alpha) \\
				&= \vp(\alpha) \\
				&= \alpha,
\end{align*}
and so \( g_jfh^{-1}(\alpha) = h^{-1}(\alpha) \).

On the other hand, observe that \( hgfh^{-1}(h(\Sigma_j)) = h(f(\Sigma_j)) \) since by construction \( g \) maps  \( f(\Sigma_j) \) onto itself; hence, \( \alpha \) is also contained in \( h(f(\Sigma_j)) \). 
In particular, \( \Sigma_j \cap f(\Sigma_j) \) contains an arc, namely \( h^{-1}(\alpha) \), with both endpoints in \( \partial f(\Sigma_j) \cap \partial\Sigma_j \). 
Now, either \( \Sigma_j = f(\Sigma_j) \) or not (corresponding to scenarios (i) and (ii) above, respectively), and in the latter case \( h^{-1}(\alpha) \) is in the set \( \mathcal A_j \).
In either case, by our choice of \( g_j \), we have
\[ d_{\cac(f(\Sigma_j))}(g_jf(h^{-1}(\alpha)),h^{-1}(\alpha)) > 1, \] which contradicts the fact that \( g_jf(h^{-1}(\alpha)) = h^{-1}(\alpha) \). 

These three cases cover all the possibilities, and each results in a contradiction.
Therefore, we can conclude that \( \mcg(S) \) has no comeager conjugacy class. 
\end{proof}

\bibliographystyle{amsplain}
\bibliography{Bib-jep}

\end{document}